\documentclass{amsart}
\usepackage[utf8]{inputenc}
\usepackage{setspace}
\usepackage[margin=1.25in]{geometry}
\usepackage{graphicx}
\graphicspath{ {./figures/} }
\usepackage{subcaption}
\usepackage{amsmath}
\usepackage{amssymb}
\usepackage{mathtools}
\usepackage{lineno}
\usepackage{amsfonts}
\usepackage{xfrac} 
\usepackage{graphicx} 
\usepackage{amsthm}
\usepackage[all]{xy}

\usepackage[pagebackref=true]{hyperref}
\usepackage{float}
\usepackage{braket}

\usepackage{multirow}
\usepackage{diagbox}
\usepackage{slashbox}

\usepackage{tikz-cd}
\usetikzlibrary{quotes}
\usetikzlibrary{cd}

\newtheorem{theorem}{Theorem}
\newtheorem{example}[theorem]{Example}
\newtheorem{definition}[theorem]{Definition}
\newtheorem{lemma}[theorem]{Lemma}

\newtheorem{remark}[theorem]{Remark}
\newtheorem{proposition}[theorem]{Proposition}

\newtheorem*{proposition*}{Proposition}
\newtheorem*{theorem*}{Theorem}
\newtheorem*{definition*}{Definition}

\begin{document}

\title{A Decomposition Theorem for Topological Semi-small Maps}

\author{Shahryar Ghaed Sharaf}

\date{\today}

\subjclass[2020]{55M25, 55N30, 55R10, 55R25, 57N80}
\keywords{Sheaf Theory, Stratified Spaces, Fiber bundles, Sphere bundles}

        \begin{abstract}
         A continuous surjection $f: X \rightarrow Y$ is an almost covering map if there exists a nowhere dense closed set $R \subset Y$ with a finite decomposition $R = \bigsqcup_{\beta} T_{\beta}$ such that the restriction $f\vert_{f^{-1}(Y \setminus R)}$ is a covering map and, for each $\beta$, the restriction $f\vert_{f^{-1}(T_{\beta})}$ is a fiber bundle. We refer to an almost covering map as a topological semi-small map if, for each $\beta$, the fiber of $f\vert_{f^{-1}(T_\beta)}$ and the space $T_\beta$ satisfy the same dimension condition as for algebraic semi-small maps. We first study the topological properties of almost coverings and develop the necessary tools. In the next step, we restrict our attention to topological semi-small maps 
         and prove that, under the assumption that for each $\beta$ and each 
         trivializing neighborhood $U \subset T_{\beta}$ the connecting homomorphisms 
         in the Gysin sequence of the normal bundle of $f^{-1}(U) \subset X$ vanish 
         in suitable degrees, the derived direct image of the constant sheaf on the 
         domain admits a decomposition.
         \end{abstract}

\maketitle

\tableofcontents
	
     \section{Introduction}

     Let $f:X \longrightarrow Y$ be a proper surjective algebraic morphism with $X$ smooth and pure-dimensional. It is well known that there exists a Whitney stratification $Y = \bigsqcup_{\alpha} S_{\alpha}$ with respect to which $f$ is a stratified morphism; in particular, for each $\alpha$, the restriction $f^{-1}(S_{\alpha}) \longrightarrow S_{\alpha}$ is a locally trivial fibration (see e.g.~\cite{Verdier1976}). The algebraic morphism $f:X \longrightarrow Y$ is called semi-small if 
     \begin{align}\label{DimCond}
     2 \dim_{\mathbb{C}} (f^{-1}(s_{\alpha}))+ \dim_{\mathbb{C}}(S_{\alpha}) \leq \dim_{\mathbb{C}}(X)
     \end{align}
   for all $\alpha$ and $s_{\alpha} \in S_{\alpha}$. The semi-small morphisms have been studied by Borho and MacPherson in \cite{BorhoMacPherson1981} and \cite{BorhoMacPherson1983}. Let $S_{\alpha_{\operatorname{top}}}$ be the top stratum of $Y$, and let $s \in S_{\alpha_{\operatorname{top}}}$. From the semi-small condition, it follows that $\dim_{\mathbb{C}}(f^{-1}(s))=0$. Hence, the semi-small morphism $f$ restricted to the top stratum is topologically a covering map. A stratum $S$ is called relevant if the equality  $2 \dim_{\mathbb{C}} (f^{-1}(s))+ \dim_{\mathbb{C}}(S) = \dim_{\mathbb{C}}(X)$ holds. Furthermore, since $f$ is a proper map, the complex $\mathcal{L}_{S}:= (R^{n-\dim_{\mathbb{C}(S) }}f_{\ast} \underline{\mathbb{Q}}_{X})\vert_{S}$ is a local system. The algebraic semi-small morphisms are subject to the following decomposition theorem (see e.g. \cite{deCataldoMigliorini2002})
   \begin{theorem*}
   	Let $f : X \to Y$ be a proper surjective semi-small map .
   	Let $Y= \bigsqcup_{\alpha}S_{\alpha}$ be a Whitney stratification of $Y$ with respect to which $f$ is stratified, and denote by $Y_{\mathrm{rel}} \subset Y$ the set of relevant strata of $f$.
   	For each $S \in Y_{\mathrm{rel}}$, let $\mathcal{L}_S$ be the corresponding local system defined above.
   	There is a canonical quasi-isomorphism
   	\begin{align*}
   	Rf_* \mathbb{Q}_X[\dim_{\mathbb{C}}(X)] \simeq \bigoplus_{S \in Y_{\mathrm{rel}}} IC_{\overline{S}}(\mathcal{L}_S).
   	\end{align*}
   \end{theorem*}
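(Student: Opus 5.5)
The plan is the de Cataldo--Migliorini route: show that $P := Rf_*\mathbb{Q}_X[n]$, $n=\dim_{\mathbb{C}}X$, is a self-dual perverse sheaf, then use semisimplicity of the category of perverse sheaves underlying pure Hodge modules (or the hard Lefschetz/decomposition theorem of Beilinson--Bernstein--Deligne--Gabber) to split it into intersection complexes, and finally identify the summands stratum by stratum. The first step is to show $P$ is perverse. Since $X$ is smooth, $\mathbb{Q}_X[n]$ is perverse and Verdier self-dual, $\mathbb{D}\mathbb{Q}_X[n]\simeq \mathbb{Q}_X[n]$. Since $f$ is proper, $Rf_*=Rf_!$ commutes with $\mathbb{D}$, so $\mathbb{D}P\simeq P$. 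It therefore suffices to check the support condition: $\dim \operatorname{supp}\mathcal{H}^{i}(P) < -i$ for $i>-n$ (and $\leq -i$ for all $i$); the cosupport condition then follows by duality. By proper base change, $\mathcal{H}^{i}(P)_{s}=H^{i+n}(f^{-1}(s);\mathbb{Q})$ for $s\in S_\alpha$. This vanishes unless $i+n\leq 2\dim_{\mathbb{C}} f^{-1}(s)$. Then (\ref{DimCond}) gives $\dim_{\mathbb{C}} S_\alpha \leq n-2\dim_{\mathbb{C}} f^{-1}(s)\leq -i$, which is the perversity support bound on each stratum.

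The second step is the decomposition. By the decomposition theorem (BBDG for $f$ proper and $X$ smooth, or Saito), $P$ is semisimple, so $P\simeq\bigoplus_{S} IC_{\overline S}(\mathcal{M}_S)$ over strata $S$, with semisimple local systems $\mathcal{M}_S$ on $S$. There are no shifted summands because $P$ is perverse, and perverse cohomology is concentrated in degree $0$. It remains to identify $\mathcal{M}_S$.

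The third step identifies the local systems. Fix a stratum $S$ of complex dimension $d$ and a point $s\in S$. For any other summand $IC_{\overline{T}}(\mathcal{M}_T)$ with $S\subset\overline T\setminus T$, the condition defining intersection complexes gives $\mathcal{H}^{-d}(IC_{\overline T})|_S=0$. Summands with $\overline T\not\ni s$ contribute nothing. So $\mathcal{H}^{-d}(P)|_S\simeq\mathcal{M}_S$. On the other hand, $\mathcal{H}^{-d}(P)|_S = (R^{n-d}f_*\mathbb{Q}_X)|_S=\mathcal{L}_S$ by proper base change. If $S$ is irrelevant, then $n-d>2\dim_{\mathbb{C}} f^{-1}(s)$, so $\mathcal{L}_S=0$ and $S$ contributes nothing. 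If $S$ is relevant, we get $\mathcal{M}_S=\mathcal{L}_S$. Canonicity comes from the fact that the support decomposition of a semisimple perverse sheaf, split by strict support, is canonical, as in de Cataldo--Migliorini~\cite{deCataldoMigliorini2002}.

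The main obstacle is the semisimplicity input in the second step: the $IC$'s in $P$ are forced by the support and cosupport conditions, but splitting requires a deep theorem. If one wants to avoid invoking full BBDG, the fallback is de Cataldo--Migliorini's argument: the intersection form on $H^{2\dim f^{-1}(s)}$ of fibres over relevant strata is definite (Hodge--Riemann), which gives the splitting directly. I would cite the decomposition theorem rather than reprove it.
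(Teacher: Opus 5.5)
The paper does not prove this statement. It is quoted as background from \cite{deCataldoMigliorini2002}, so there is no internal argument to match against. Your argument is correct and is the standard derivation from BBDG:
\begin{itemize}
\item semi-smallness together with $\mathbb{D}P\simeq P$ gives perversity;
\item BBDG (or Saito) gives semisimplicity, with no shifted summands;
\item restricting $\mathcal{H}^{-\dim S}$ to each stratum identifies $\mathcal{M}_S$ with $\mathcal{L}_S$ and kills the irrelevant strata;
\item the decomposition by strict support is canonical, because there are no nonzero morphisms between intersection complexes with different strict supports.
\end{itemize}

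One small slip in the first step. The condition you need is $\dim\operatorname{supp}\mathcal{H}^{i}(P)\le -i$ for all $i$. The strict inequality for $i>-n$ is the condition characterizing $IC_Y$, and it genuinely fails for $P$ along relevant strata of positive codimension, where $\mathcal{H}^{-\dim S}(P)|_S=\mathcal{L}_S\neq 0$. Your computation proves exactly the non-strict bound, so nothing breaks.

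\textbf{Comparison of routes.} You let the general decomposition theorem do all the splitting, and use semi-smallness only to see that $P$ is perverse. The cited reference runs the other way. It shows that, for a semi-small map, the splitting is equivalent to the non-degeneracy of the refined intersection forms on $H^{BM}_{2\dim f^{-1}(s)}(f^{-1}(s))$ for $s$ in relevant strata. It then proves these forms are definite via the Hodge--Riemann relations. This yields the splitting without appealing to BBDG (in the projective setting treated there), and it also gives extra information, namely the signs of these forms.

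Your version is shorter but rests on the heaviest available input, whereas theirs needs only Hodge theory on $X$. This is essentially the fallback you mention at the end.

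The paper's own topological analogues, Proposition \ref{DecoTheo1} and Theorem \ref{DecoTheo3}, follow neither route. Without weights or Hodge theory there is no semisimplicity, so they split the attaching triangle stratum by stratum using Lemma \ref{DecompAlg}. There, the assumed vanishing of the Gysin connecting homomorphism in degree $\operatorname{codim}_X(B)-1$ stands in for the Hodge-theoretic input.
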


  The existence of polarizable Hodge modules in the algebraic setting, as defined by Saito in \cite{Saito1985,Saito1988}, enables a decomposition theorem for a proper map $f:X \longrightarrow Y$ of complex algebraic varieties, where $X$ is pure-dimensional. More precisely, the BBDG decomposition theorem \cite{BBD} yields a non-canonical decomposition of $Rf_{\ast}IC_{X}$, the derived direct image of Deligne's sheaf complex.

   BBDG decomposition theorem is a fundamental structural result. It gives the geometric proof of the Kazhdan–Lusztig conjectures \cite{Soergel1990}. In the Langlands programme, Ngô in \cite{Ngo2010} exploited the existence of such decompositions to prove a support theorem for the Hitchin fibration, a key ingredient in the Fundamental Lemma. The theorem further underpins the Springer correspondence \cite{BorhoMacPherson1983}, the construction of character sheaves \cite{lusztig1985I}, and the geometric Satake equivalence \cite{MirkovicVilonen}. For further applications, the reader may consult \cite{deCataldo2017}.

   Generalizations of algebraic decomposition theorems to a broader topological setting, even for smooth topological manifolds fail, in general. For a smooth proper submersion $f:M \longrightarrow N$ of smooth manifolds, Ehresmann’s theorem, introduced in \cite{Ehresmann1951}, fully describes the local behavior of $f$. To be more precise, the map $f$ is a locally trivial smooth fiber bundle and all fibers
   are diffeomorphic. Hence, the sheaf $R^{k}f_{\ast}\underline{\mathbb{Q}}_{M}$ is a local system for each $k$. But the complex $Rf_{\ast} \underline{\mathbb{Q}}_{M}$ does not admit a decomposition in terms of these local systems in the bounded derived category $D^{b}_{c}(N)$, in general. A classical example is provided by the Hopf fibration $f : S^{3} \longrightarrow S^{2}$, whose local behavior is fully understood, yet the complex $Rf_{\ast}\underline{\mathbb{Q}}_{S^{3}}$ does not admit a decomposition in the above sense. Hence, in a more general topological setting, one needs more than just the local behavior of a map in order to have any hope of obtaining a decomposition theorem for the map.

    This work addresses the following question: Under which conditions, for a given map $f: X \longrightarrow Y$ which is a covering over a dense open subset, does the right derived direct image complex $Rf_{\ast}\underline{\mathbb{Q}}_{X}$ decompose in the derived category $D^{b}_{c}(Y)$, at least when $X$ is a topological manifold? Our goal is to answer this question in a general topological setting, without assuming any smooth or algebraic structure. In a recent work \cite{sharaf2025decomposition}, we developed a decomposition theorem for topological branched coverings. The principal objects of investigation in this work are the maps defined below, which constitute a generalization of branched coverings obtained by relaxing the finiteness condition over the branch locus.

        \begin{definition*}[Almost Covering Maps]
     	A continuous proper surjection $ f: X \longrightarrow Y$ between path-connected, locally compact, Hausdorff spaces $X$ and $Y$ is called an \textbf{almost covering map}
    	if 
    	\begin{enumerate}
    	\item there exists a nowhere dense closed set $R \subset Y$ such that the restriction $f \vert_{f^{-1}(Y \setminus R)} : f^{-1}(Y \setminus R) \longrightarrow
    	Y \setminus R$ is a finite covering map, and
    	\item there is a finite decomposition $R=\bigsqcup_{\beta}T_{\beta}$ such that the restriction $f \vert_{f^{-1}(T_{\beta})}:f^{-1}(T_{\beta}) \longrightarrow T_{\beta}$ is a fiber bundle for each $\beta$.
        \end{enumerate}
        The set $Y \setminus R$ is called the regular set of the almost covering map $f$, and the set $R$ is referred to as a fibered set of the map $f$.
        \end{definition*}
        In this work, we restrict our attention to the cases where $X$ is a topological manifold and $Y$ is a topological pseudomanifold. Furthermore, we assume that the inclusions $R \hookrightarrow Y$ and $f^{-1}(R) \hookrightarrow X$ are locally flat. In Section~\ref{TopAlCov}, we study the topology of almost covering maps; here, without any analytic or algebraic structure, many standard topological tools---for instance, the existence of stratifications with respect to which $f$ is stratified---are unavailable. Note that not all almost covering maps can be stratified. In particular, the decomposition of the stratified set $R:= \bigsqcup_{\beta}T_{\beta}$, given in the above definition, cannot be arbitrary. Proposition \ref{RefinedStra} delivers the stratification with respect to which we construct Deligne's sheaf complex, defined in \ref{DeSheaf}, in the decomposition theorem that we develop, assuming that the decomposition of the fibered set satisfies the conditions in Definition \ref{SutStrat}, which is of a technical nature.

        In Section \ref{DecomSec}, we first develop a decomposition theorem for a simple class of almost covering maps. To be more precise, assume that $f^{-1}(R)$ is a submanifold of $X$. As shown in \cite{Rourke1967} by Rourke and Sanderson, not every submanifold which is embedded piecewise linearly possesses a topological normal microbundle in the ambient space. As it turns out, the normal bundle of $f^{-1}(R)$ in $X$, if it exists, can be used to fully answer the question of whether the complex $Rf_{\ast}\underline{\mathbb{Q}}_{X}$ admits a decomposition in the derived category. Hence, we assume its existence. Note that in a finer topological setting, for example, when working with a Whitney stratification or a Thom--Mather stratification on $Y$, the induced stratification on $X$ is also Whitney or Thom--Mather. In particular, a stratum of a Whitney or Thom-Mather stratified space has a tubular neighborhood in the ambient space, and hence the previous assumption is not necessarily (see e.g. \cite{Mather2012} and \cite{mather1973stratifications}). The following proposition is the first main result of Section \ref{DecomSec}.

    \begin{proposition*}[Proposition \ref{DecoTheo1}]
    	Let $f : X \longrightarrow Y$ be an almost covering such that $X$ and the fibered set $R$ are closed topological manifolds, and $Y$ is a closed topological $n$-pseudomanifold. Assume that the restriction $f|_{f^{-1}(R)} : f^{-1}(R) \longrightarrow R$ is a fiber bundle with fiber $F$, a closed orientable topological manifold, and that $\operatorname{codim}_{X}(f^{-1}(R)) \geq 2$. Let $\overline{p} \in \{\overline{n}, \overline{m} \}$ be a perversity. Moreover,
        \begin{enumerate}
    	\item suppose that the normal bundle of $B:=f^{-1}(R)$ exists and is orientable, and that for each $r \in R$ there is a trivializing neighborhood $U$ of $r$ such that the connecting homomorphism in the Gysin sequence of the normal bundle of $B$ restricted to $f^{-1}(U)$ vanishes in degree $\operatorname{codim}_{X}(B)-1$.
    	\end{enumerate}
    	Let $R(f\vert_{X \setminus B})_{\ast}\underline{\mathbb{Q}}_{X \setminus B} \simeq \underline{\mathbb{Q}}_{Y \setminus R} \oplus \mathcal{L}$, where $\mathcal{L}$ is the local system on $Y \setminus R$ induced by the covering map $f\vert_{X \setminus B}$. If $\overline{p}(\operatorname{codim}_{Y}(R))+1 = \operatorname{codim}_{X}(B)$ and $\operatorname{codim}_{Y}(R)$ is even, then
    \begin{align}
    	Rf_{\ast}\underline{\mathbb{Q}}_{X}[n] \simeq h_{\ast} \big((R^{0}f\vert_{B})_{\ast} \underline{\mathbb{Q}}_{B}[\dim(B)]\big) \oplus IC^{\overline{p}}_{Y} (\underline{\mathbb{Q}}_{Y \setminus R} \oplus \mathcal{L}),
    \end{align}
    where $h: R\hookrightarrow Y$ is the inclusion, and $IC^{\overline{p}}_{Y}$ is Deligne's sheaf complex with respect to the stratification $Y \supset R$.
    \end{proposition*}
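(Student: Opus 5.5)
We will use the standard two-step criterion for recognizing Deligne's sheaf complex: a complex $K$ on $Y$ with $K|_{Y\setminus R}\simeq (\underline{\mathbb{Q}}_{Y\setminus R}\oplus\mathcal{L})[n]$ is isomorphic to $IC^{\overline{p}}_Y(\underline{\mathbb{Q}}_{Y\setminus R}\oplus\mathcal{L})$ once the attaching map $K\to \tau_{\le \overline{p}(c)-n}Rj_\ast j^\ast K$ is an isomorphism. Here $j:Y\setminus R\hookrightarrow Y$ and $c=\operatorname{codim}_Y(R)$. Set $K:=Rf_\ast\underline{\mathbb{Q}}_X[n]$ and $d:=\operatorname{codim}_X(B)$. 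Since $f$ is proper, base change gives $h^\ast K\simeq R(f|_B)_\ast\underline{\mathbb{Q}}_B[n]$. Over $Y\setminus R$ the restriction of $K$ is the covering pushforward, which splits as in the hypothesis.

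The plan is to compute $h^\ast Rj_\ast j^\ast K$ through the normal bundle. Let $N\to B$ be the (orientable) normal bundle, with sphere bundle $SN\to B$ of fiber $S^{d-1}$. Because $f$ is proper and restricts to a covering off $B$, a small tubular neighbourhood gives locally over a trivializing $U\subset R$
\[
(h^\ast Rj_\ast j^\ast K)_r \cong H^\ast(f^{-1}(U)\setminus B)\cong H^\ast(SN|_{f^{-1}(U)})
\]
(shifted by $n$). So the stalk cohomology is governed by the Gysin sequence
\[
\cdots\to H^{k-d}(f^{-1}(U))\xrightarrow{\cup e}H^{k}(f^{-1}(U))\to H^{k}(SN|_{f^{-1}(U)})\xrightarrow{\delta}H^{k-d+1}(f^{-1}(U))\to\cdots .
\]
Since $f^{-1}(U)\simeq U\times F$ is homotopic to $F$, the restriction map $H^k(f^{-1}(U))\to H^k(SN)$ is injective in degrees $k<d$, because the Euler-class term $H^{k-d}$ vanishes there. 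The vanishing of $\delta$ in degree $d-1$ assumed in (1) then gives $H^k(SN|_{f^{-1}(U)})\cong H^k(F)$ for $k\le d-1$. The relation $\overline{p}(c)+1=d$ means that the truncation degree $\overline{p}(c)-n$ corresponds exactly to cohomological degree $d-1$. Hence $\tau_{\le d-1}$ of the link cohomology agrees with $\tau_{\le d-1}R(f|_B)_\ast\underline{\mathbb{Q}}_B$, and the higher part $H^{\ge d}(F)$ is not seen by the truncated link.

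The next step is to build the splitting. The adjunction triangle $j_!j^\ast K\to K\to h_\ast h^\ast K$, combined with the natural map $K\to Rj_\ast j^\ast K$, yields a morphism $K\to IC:=\tau_{\le \overline{p}(c)-n}Rj_\ast j^\ast K$. By the computation above, its cone is supported on $R$ with stalks $H^{\ge d}(F)$, placed in degrees above the $IC$ truncation. We would use fiberwise Poincaré duality of the closed orientable $F$ to identify the surviving part with $h_\ast$ of the local system $R^{0}(f|_B)_\ast\underline{\mathbb{Q}}_B[\dim B]$ appearing in the statement. Here the normalization of the shifts uses $n-c=\dim R$, $\dim B=\dim R+\dim F$, and the evenness of $c$, which is needed so that $\overline{n}(c)$ and $\overline{m}(c)$ make $\overline{p}(c)+1=d$ consistent with a self-dual placement.

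To conclude, it remains to show the triangle $IC\to K\to h_\ast(\cdots)$ splits, and this is the main obstacle. The plan is to show that the connecting morphism $h_\ast(\cdots)\to IC[1]$ vanishes by a $\operatorname{Hom}$ computation: $\operatorname{Hom}(h_\ast M, IC[1])=\operatorname{Hom}(M,h^!IC[1])$. The costalk $h^!IC$ is concentrated in degrees $>\overline{p}(c)-n+1$ by the dual support condition, which follows from Verdier self-duality of $IC$ when $\overline{p}$ is $\overline{n}$ or $\overline{m}$ on an even-codimension stratum. The local system $M$, on the other hand, sits in a single degree below that range, so the morphism group is zero. If this degree bookkeeping fails, the fallback is to use the $\delta=0$ hypothesis directly to produce a local section $H^\ast(f^{-1}(U))\to H^\ast(SN)$. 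These local splittings glue because they are canonical through the Gysin sequence.
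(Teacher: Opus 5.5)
Your stalk computation is correct, but the splitting step has a genuine gap, and it is exactly the step you flagged. Write $d=\operatorname{codim}_X(B)$, $m=\overline{p}(c)-n=d-1-n$ and $A=h^\ast Rj_\ast j^\ast K$.

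First, a point you left implicit. For even $c$ both $\overline{m}(c)$ and $\overline{n}(c)$ equal $c/2-1$, so $d=c/2$ and $\dim F=c-d=d$. This is the real role of the evenness hypothesis, not self-duality. It is what gives $K\in D^{\le m+1}$ and $\tau_{\ge m+1}K\simeq h_\ast M[n-d]$ with $M=R^d(f|_B)_\ast\underline{\mathbb{Q}}_B$ a single local system.

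Since $K\notin D^{\le m}$, the natural map $K\to Rj_\ast j^\ast K$ does not by itself give a morphism $K\to\tau_{\le m}Rj_\ast j^\ast K$. What your computation does give is $\tau_{\le m}K\xrightarrow{\cong}IC$, hence a triangle $IC\to K\to h_\ast M[n-d]\xrightarrow{\partial}IC[1]$.

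Here your degree count is off by one. From the construction, $h^!IC\simeq(\tau_{\ge m+1}A)[-1]$, so $h^!IC$ does live in degrees $\ge m+2$; this needs no self-duality. But then $h^!IC[1]$ lives in degrees $\ge m+1=d-n$, which is exactly the degree of $M[n-d]$. Hence
\[
\operatorname{Hom}(h_\ast M[n-d],IC[1])\cong\operatorname{Hom}\bigl(M,\mathcal{H}^{d-n}(A)\bigr),
\]
a Hom of local systems on $R$ that is locally $\operatorname{Hom}(H^d(F),H^d(SN|_F))$. When $\cup e\colon H^0(F)\to H^d(F)$ is bijective, the Gysin sequence gives $H^d(SN|_F)\cong H^1(F)$, so this group need not vanish. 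Degree reasons alone cannot kill $\partial$.

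Your fallback does not repair this. Morphisms in $D^b(Y)$ do not glue from local data, and local splittings of cohomology groups do not produce a splitting in the derived category.

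The missing idea is to use orthogonality of the $t$-structure to reduce to one specific sheaf map, and then show that this map is zero. Since $K\in D^{\le m+1}$, we have $\operatorname{Hom}(K,\tau_{\ge m+1}Rj_\ast j^\ast K)\cong\operatorname{Hom}(\mathcal{H}^{m+1}K,\mathcal{H}^{m+1}Rj_\ast j^\ast K)$. Over $R$, the relevant map is stalkwise $\pi^\ast\colon H^d(F)\to H^d(SN|_F)$. It vanishes: $\delta_{d-1}=0$ makes $\cup e\colon H^0(F)\to H^d(F)$ injective, hence bijective because $F$ is closed, orientable and $d$-dimensional, and then $\pi^\ast=0$ by exactness.

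So $K\to Rj_\ast j^\ast K$ factors through a morphism $g\colon K\to IC$ that is an isomorphism on $\mathcal{H}^{\le m}$. The composite $IC\cong\tau_{\le m}K\to K$ is a section of $g$, giving $K\simeq IC\oplus h_\ast M[n-d]$.

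This is precisely the paper's Lemma~\ref{DecompAlg}, applied there to the Gysin triangle $Rf_\ast Rk_\ast\underline{\mathbb{Q}}_B[-c_B]\to Rf_\ast\underline{\mathbb{Q}}_X\to Rf_\ast Ri_\ast\underline{\mathbb{Q}}_{X\setminus B}$. In that form the extra summand is $\tau_{\le c_B}$ of the first term, i.e.\ directly $h_\ast R^0(f|_B)_\ast\underline{\mathbb{Q}}_B[-c_B]$. You therefore also avoid identifying $R^d$ with $R^0$ by fiberwise duality; the identification the hypotheses actually supply is $\cup e$.
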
    
    Note that if the spaces $X$, $B$, and $R$ are even-dimensional, then the condition $\overline{p}(\operatorname{codim}_{Y}(R))+1 = \operatorname{codim}_{X}(B)$ is equivalent to the dimension condition for semi-small morphisms, i.e. if the equality in Dimension Condition \ref{DimCond} holds.
    
    In the final step, we consider almost covering maps such that the decomposition of the fibered set $R= \bigsqcup_{\beta}T_{\beta}$, specified in the above definition, consists of more than one member. It turns out that, as in the algebraic case, only the relevant part of the fibered set $R$, namely the fiber bundles $f\vert_{f^{-1}(T_{\beta})}$ that satisfy the condition $2 \dim(f^{-1}(r))+\dim(T_{\beta})=\dim(X)$ for $r \in T_{\beta}$, contributes to the decomposition of $Rf_{\ast}\underline{\mathbb{Q}}_{X}$. To be more precise, we define topologically semi-small maps and the relevant part of the fibered set as follows.
    
       \begin{definition*}
    		Let $f:X \longrightarrow Y$ be an almost covering where $X$ is a closed $n$-manifold, $Y$ is a closed $n$-pseudomanifold. Let the filtration $Y \supset Y_{n_{R}}(=R) \supset Y_{n_{R}-1} \supset \cdots$ be the refined stratification on $Y$ obtained by Proposition~\ref{RefinedStra}, where $R$ is the fibered set and $n_{R}:= \dim(R)$. The almost covering $f:X \longrightarrow Y$ is called a \textbf{topologically semi-small map} if for each point $r \in S_{k}(:=Y_{k} \setminus Y_{k-1})$ and for each $0 \leq k \leq n_{R}$, the condition $2 \dim(f^{-1}(r))+\dim(S_{k}) \leq n$ is satisfied. Furthermore, we the set $R_{\operatorname{rel}}:=\{S_{k} \ \vert \  2 \dim(f^{-1}(r))+\dim(S_{k}) = n \ \forall \ r \in f^{-1}(S_{k})\}$ is referred to as the set of relevant strata.
    \end{definition*}

      The following theorem, which is the main result of this work, is a generalization of the previous proposition to topologically semi-small maps.
    
     \begin{theorem*}[Theorem \ref{DecoTheo3}]
        	Let $f : X \rightarrow Y$ be a topologically semi-small map such that $X$ is a closed topological manifolds, the fibered set $R$ is a closed stratified space, and $Y$ is a closed topological $n$-pseudomanifold. Set $B:=f^{-1}(R)$, let $R=\bigsqcup_{\beta}T_{\beta}$ be a suitable decomposition of the fibered set, and let $R_{\operatorname{rel}}$ be its relevant part. Furthermore, let the filtration $Y \supset Y_{n_{R}}(=R) \supset Y_{n_{R}-1} \supset \cdots$ be the refined stratification on $Y$ obtained by Proposition~\ref{RefinedStra}, where $n_{R}:= \dim(R)$. Set $S_{k}:= Y_{k} \setminus Y_{k-1}$, and assume the followings:
        \begin{enumerate}
        	\item for each $r \in R$ the fiber $f^{-1}(r)$ is a closed orientable topological manifold;
        	\item the normal bundle of the manifold $f^{-1}(S_{k})$ in $X$ exists, for each $0 \leq k \leq n_{R}$, and is orientable, and for each $S_{k'}\in R_{\operatorname{rel}}$ and each $r\in S_{k'}$ there is a small open neighborhood $U$ of $r$ such that the connecting homomorphism in the Gysin sequence of the normal bundle of $S_{k}$ restricted to $f^{-1}(U)$ vanishes in degree $\operatorname{codim}_{X}(B)-1$.
        	\end{enumerate}
        	Let $R(f|_{X \setminus B})_{*}\,\underline{\mathbb{Q}}_{X \setminus B}\simeq \underline{\mathbb{Q}}_{Y \setminus R}\oplus \mathcal{L}$, where $\mathcal{L}$ is the local system on $Y \setminus R$ associated to the covering map $f|_{X \setminus B}$. If each $\operatorname{codim}_{Y}(S_{k})$ is even and $\operatorname{codim}_{X}(B)\geq 2$, then
        \begin{align*}
        	Rf_{\ast}\underline{\mathbb{Q}}_{X}[n]\simeq \bigoplus_{S_{k} \in R_{\operatorname{rel}}}\iota_{k\ast}(IC_{\overline{S_{k}}} (R^{0}f\vert_{f^{-1}(S_{k})}\underline{\mathbb{Q}}_{S_{k}})) \oplus IC_{Y}(\underline{\mathbb{Q}}_{Y \setminus R} \oplus \mathcal{L}),
        \end{align*}
        where $\iota_{k}:\overline{S_{k}} \hookrightarrow Y$ is the canonical closed inclusion, and $IC^{\overline{p}}_{Y}$ is Deligne's intersection complex with respect to the above refined stratification and the perversity $\overline{p}\in\{\overline{n},\overline{m}\}$.
       \end{theorem*}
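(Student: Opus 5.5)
The argument runs by induction over the refined stratification $Y \supset Y_{n_R} \supset Y_{n_R-1}\supset\cdots$ of Proposition~\ref{RefinedStra}. At each stage we use Deligne's construction $IC^{\overline p}_Y = \tau_{\le \overline p(\cdot)-n} Rj_{k\ast}\cdots$ together with the local computation already carried out in Proposition~\ref{DecoTheo1}.

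Set $U_k := Y\setminus Y_{k-1}$ and let $f_k := f\vert_{f^{-1}(U_k)}$. The inductive claim on $U_k$ is
\[
Rf_{k\ast}\underline{\mathbb{Q}}[n] \simeq \bigoplus_{S_{k'}\in R_{\operatorname{rel}},\,k'\ge k} \iota_{k'\ast}IC_{\overline{S_{k'}}}(R^0 f\vert_{f^{-1}(S_{k'})}\underline{\mathbb{Q}}) \oplus IC_{U_k}(\underline{\mathbb{Q}}_{Y\setminus R}\oplus\mathcal L).
\]
The base case, on $Y\setminus R$, is the covering decomposition $\underline{\mathbb{Q}}\oplus\mathcal L$.

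For the inductive step from $U_{k+1}$ to $U_k = U_{k+1}\sqcup S_k$, write $j:U_{k+1}\hookrightarrow U_k$ and $i:S_k\hookrightarrow U_k$. Properness gives $Rf_{k\ast}\underline{\mathbb{Q}}[n]\vert_{U_{k+1}}\simeq Rf_{k+1\ast}\underline{\mathbb{Q}}[n]$. We then use the attaching triangle $i_\ast i^! \to K \to Rj_\ast j^\ast K\to$. Everything is local over $S_k$, so we work over a small trivializing neighborhood $U$ of $r\in S_k$. There the normal bundle of $f^{-1}(S_k)$ (hypothesis (2)) gives a tubular neighborhood, and the link cohomology of $f^{-1}(U)$ is computed by the Gysin sequence of the normal sphere bundle. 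By proper base change, $i^\ast Rf_\ast\underline{\mathbb{Q}}$ has stalks $H^\ast(f^{-1}(r))$, which vanish above degree $2\dim f^{-1}(r)$. Semi-smallness $2\dim f^{-1}(r)+\dim S_k\le n$ then shows that $Rf_\ast\underline{\mathbb{Q}}[n]$ satisfies the support condition along $S_k$. Poincaré duality on the manifold $X$ gives $\mathbb D Rf_\ast\underline{\mathbb{Q}}_X[n]\simeq Rf_\ast\underline{\mathbb{Q}}_X[n]$, which yields the cosupport condition. The evenness of $\operatorname{codim}_Y S_k$ makes $\overline n$ and $\overline m$ agree there, as in Proposition~\ref{DecoTheo1}. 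Both conditions are strict off the relevant strata.

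Hence, for non-relevant $S_k$, $Rf_\ast\underline{\mathbb{Q}}[n]\vert_{U_k}$ satisfies Deligne's axioms for the summands already present on $U_{k+1}$. By uniqueness, i.e.\ the characterization of $IC$ and of $\iota_\ast IC_{\overline{S_{k'}}}$ by the support and cosupport axioms, the decomposition on $U_{k+1}$ extends with no new summand. For relevant $S_k$ the only failure sits in the single critical degree $-\dim S_k$, i.e.\ $R^{\operatorname{codim}_X(B)\ldots}$ — the top cohomology $H^{2d}(f^{-1}(r))$ with $d=\dim f^{-1}(r)$. Orientability of the fibers identifies it with the local system $R^0 f\vert_{f^{-1}(S_k)}\underline{\mathbb{Q}}$ via the fundamental class, and this produces the local system $\mathcal L_{S_k}$ on $S_k$.

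The main obstacle is splitting off $i_\ast\mathcal L_{S_k}[\dim S_k]$, which must then be extended to $\iota_{k\ast}IC_{\overline{S_k}}$ over the lower strata in later steps. For this I would reproduce the argument of Proposition~\ref{DecoTheo1}. The vanishing of the Gysin connecting homomorphism in degree $\operatorname{codim}_X(B)-1$ over $f^{-1}(U)$ means that the map $H^{\ast}(f^{-1}(U))\to H^\ast(\partial\text{-tube})$ in the critical degree is split. Equivalently, the composite $i^\ast\tau\,Rj_\ast j^\ast K\to i^\ast K$ together with the natural map $i^! K \to i^\ast K$ gives an isomorphism onto $i^\ast K$ in degree $-\dim S_k$. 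It remains to globalize these local splittings along $S_k$. I would attempt this by observing that the relevant morphism $\mathcal H^{-\dim S_k}(i^!K)\to\mathcal H^{-\dim S_k}(i^\ast K)$ is a map of local systems that is locally an isomorphism, hence globally an isomorphism. This gives $K\vert_{U_k}\simeq \tau_{\le}Rj_\ast j^\ast K\oplus i_\ast\mathcal L_{S_k}[\dim S_k]$. Finally, the refinement in Proposition~\ref{RefinedStra} and the suitability condition of Definition~\ref{SutStrat} guarantee that the closures $\overline{S_k}$ are unions of strata. The already-split summands $\iota_{k'\ast}IC_{\overline{S_{k'}}}$ then continue to be intermediate extensions at the next step, applying the same support and cosupport analysis to each summand separately.
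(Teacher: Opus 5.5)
\paragraph{Comparison with the paper.} Your plan has the same architecture as the paper's proof. By base change, the paper's triangle $Rf_{\ast}Rk_{\ast}\underline{\mathbb{Q}}_{B}[-c_B]\to Rf_{\ast}\underline{\mathbb{Q}}_X\to Rf_{\ast}Ri_{\ast}\underline{\mathbb{Q}}_{X\setminus B}\to$ is exactly your attaching triangle $i_\ast i^!K\to K\to Rj_\ast j^\ast K\to$, up to the shift $[n]$. The input is the same Gysin sequence. Your criterion is: support and cosupport hold except in a single critical degree $a$, and $\mathcal{H}^a(i^!K)\to\mathcal{H}^a(i^\ast K)$ is an isomorphism. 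Via the long exact sequence this is equivalent to the hypotheses $h^{a-1}=0$ and $h^a$ an isomorphism of Lemma~\ref{DecompAlg}. Invoking Deligne's axiomatic characterization, rather than truncating by hand, is a cleaner way to dispose of the non-relevant strata and to carry the already-split summands across lower strata; the paper treats these steps only briefly. So the strategy is viable.

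\paragraph{Where the proposal goes wrong.} You have imported the complex-algebraic normalizations, but here every dimension is real, and several steps fail as written. Put $F=f^{-1}(r)$, $d=\dim F$, $c=\operatorname{codim}_Y S_k$, $c_k=\operatorname{codim}_X f^{-1}(S_k)=c-d$, and index by $Rf_\ast\underline{\mathbb{Q}}_X$, so that $\overline{p}(c)=c/2-1$.

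(i) The stalks $H^j(F)$ vanish above $d$, not $2d$. Your bound $2d$ plus semi-smallness does not give the support condition, which needs $d\le c/2-1$. The correct argument: semi-smallness gives $2d\le c$, so, $c$ being even, $d\le c/2-1$ off $R_{\operatorname{rel}}$ and $d=c/2$ on it.

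(ii) For a real $n$-manifold, $\mathbb{D}(\underline{\mathbb{Q}}_X[n])\simeq\mathrm{or}_X$. Hence $\mathbb{D}Rf_\ast\underline{\mathbb{Q}}_X[n]\simeq Rf_\ast\mathrm{or}_X$, not $Rf_\ast\underline{\mathbb{Q}}_X[n]$, and $X$ is not assumed orientable. You do not need duality at all. Hypothesis (2) gives $k^!\underline{\mathbb{Q}}_X\simeq\underline{\mathbb{Q}}_{f^{-1}(S_k)}[-c_k]$, so $i^!Rf_\ast\underline{\mathbb{Q}}_X$ has stalks $H^{j-c_k}(F)$. These vanish for $j<c_k$, where $c_k\ge c/2+1$ off $R_{\operatorname{rel}}$ and $c_k=c/2$ on it.

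(iii) Consequently the critical degree of $K=Rf_\ast\underline{\mathbb{Q}}_X[n]$ is $d-n=-\dim f^{-1}(S_k)$, not $-\dim S_k$. The summand you split off is $i_\ast\mathcal{L}[\dim f^{-1}(S_k)]$, which is the paper's $[\dim B]$. Only with this shift, and all codimensions even, do the truncations $\tau_{\le\overline{p}(c')-n}$ defining $IC_Y$ at a lower stratum of codimension $c'$ agree with Deligne's truncations on $\overline{S_k}$.

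(iv) The key local step must be stated precisely. At $r$ the stalk of the attaching triangle is the Gysin sequence. The map $\mathcal{H}^{c_k}(i^!Rf_\ast\underline{\mathbb{Q}}_X)_r\to\mathcal{H}^{c_k}(i^\ast Rf_\ast\underline{\mathbb{Q}}_X)_r$ is $\cup e\colon H^0(F)\to H^{c_k}(F)$, where $e$ is the Euler class of the normal bundle restricted to $F$. Vanishing of $\delta_{c_k-1}$ gives only injectivity of this map. Surjectivity needs $d=c_k$, which is relevance, together with Poincar\'e duality on the closed orientable fibre. The restriction to the link is not ``split in the critical degree'': it is an isomorphism in degree $c_k-1$ and zero in degree $c_k$. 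Also, there is no natural map $i^\ast Rj_\ast j^\ast K\to i^\ast K$.

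Finally, the identification $R^{d}(f|_{f^{-1}(S_k)})_\ast\underline{\mathbb{Q}}\cong R^{0}(f|_{f^{-1}(S_k)})_\ast\underline{\mathbb{Q}}$ comes from this Euler-class isomorphism with oriented normal bundle. It does not follow from orientability of the individual fibres, since the monodromy may reverse fibre orientations.
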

    
     Note that in the situation of the above theorem, if $B_{\operatorname{rel}} = \emptyset$, the quasi-isomorphism reduces to $Rf_{\ast}\underline{\mathbb{Q}}_{X} \simeq IC^{\overline{p}}_{Y} (\underline{\mathbb{Q}}_{Y \setminus R} \oplus \mathcal{L})$. This quasi-isomorphism can be considered as the topological analogous of the decomposition theorem for algebraic small morphisms.

     \section{Notation and Definitions}

   In this section, we introduce the definitions and notations needed for this work. We begin by defining the main object of study.
     
     \begin{definition}\label{AlMoCov}
     A continuous proper surjection $ f: X \longrightarrow Y$ between path-connected, locally compact, Hausdorff spaces $X$ and $Y$ is called an \textbf{almost covering map}
     if 
     \begin{enumerate}
     	\item there exists a nowhere closed dense set $R \subset Y$ such that the restriction $f \vert_{f^{-1}(Y \setminus R)} : f^{-1}(Y \setminus R) \longrightarrow
     	Y \setminus R$ is a finite covering map, and
     	\item there is a finite decomposition $R=\bigsqcup_{\beta}T_{\beta}$ such that the restriction $f \vert_{f^{-1}(T_{\beta})}:f^{-1}(T_{\beta}) \longrightarrow T_{\beta}$ is a fiber bundle for each $\beta$.
     \end{enumerate}
     The set $Y \setminus R$ is called the regular set of the almost covering map $f$, and the set $R$ is referred to as a fibered set of the map $f$.
     \end{definition}
     
   Since the fibered set $R$ is not uniquely determined by the above definition, the following remark is important. 

    \begin{remark}
	Observe that for a given almost covering $f: X \longrightarrow Y$, the fibered set $R\subset Y$ is not unique: any closed nowhere dense set $R'$ containing $R$ is also a fibered set. In this paper, by \emph{the fibered set $R$} we mean the minimal branch locus, i.e., the set of all points $y\in Y$ that admit no evenly covered neighborhood. Moreover, the decomposition $R=\bigsqcup_{\beta}T_{\beta}$ is not unique; in this work we always take the coarsest such decomposition, unless stated otherwise. 
    \end{remark}
     
     Similar to the inclusion of the branch locus of a branched covering into the ambient space, the inclusion of the fibered set into the ambient space $R \hookrightarrow Y$ can be arbitrarily wild. We generalize the definition of local flatness in what follows and consider only almost coverings for which the inclusions $R \hookrightarrow Y$ and $f^{-1}(R) \hookrightarrow X$ are locally flat.

    \begin{definition}\label{localflatness}
	Let $Y = \bigsqcup_{\eta} W_{\eta}$ and $R = \bigsqcup_{\alpha} S_{\alpha}$ be topologically stratified spaces such that $R \subseteq Y$. An inclusion $R \hookrightarrow Y$ is said to be \textbf{locally flat at a point $r \in R$} if the following holds. Suppose $r \in W_{\eta} \cap S_{\alpha}$ and $\dim(W_{\eta}) \leq \dim(S_{\alpha})$. Then there exist distinguished neighborhoods $U \subset R$ and $U^{\prime} \subset Y$ of $r$ such that $U \subset U^{\prime}$ and the topological pair $(U \cap S_{\alpha},\, U^{\prime} \cap W_{\eta})$ is homeomorphic to the topological pair $(\mathbb{R}^{\dim S_{\alpha}},\, \mathbb{R}^{\dim W_{\eta}})$ with the standard inclusion $\mathbb{R}^{\dim W_{\eta}} \hookrightarrow \mathbb{R}^{\dim S_{\alpha}}$. In other words, there exist homeomorphisms $U^{\prime} \cap W_{\eta} \longrightarrow \mathbb{R}^{\dim W_{\eta}}$ and $U \cap S_{\alpha} \longrightarrow \mathbb{R}^{\dim S_{\alpha}} $ making the following diagram commute:
	\begin{align*}		
		\begin{tikzcd}[ampersand replacement=\&]
			U^{\prime} \cap W_{\eta} \arrow[r,  hookrightarrow] \arrow[d] \& U \cap S_{\alpha} \arrow[d]   \\
			\mathbb{R}^{\dim W_{\eta}} \arrow[r, hookrightarrow]  \& \mathbb{R}^{\dim S_{\alpha} }  
		\end{tikzcd}.
	\end{align*}
	The inclusion $R \hookrightarrow Y$ is locally flat if it is locally flat at each point $r \in R$. If $\dim(W_{\eta}) \geq \dim(S_{\alpha})$, we define local flatness similarly.
    \end{definition}

      In this work, we consider almost coverings where the domain is a topological manifold and the target space is a topological pseudomanifold. Topological pseudomanifolds were first defined and studied by Goresky and MacPherson in \cite{GoreskyMacPherson1983}. In the following, we briefly give the necessary definitions for this work. 

    \begin{definition}\label{pseudomanifolds}
	We define a \textbf{topologically stratified space} inductively on dimension. A 0-dimensional topologically stratified space $X$ is a countable set with the discrete topology. For $m > 0$ an \textbf{$m$-dimensional topologically stratified space} is a paracompact Hausdorff topological space $X$ equipped with a filtration
	\begin{align*}
		X=X_{m} \supseteq X_{m-1} \supseteq \dots \supseteq X_{1} \supseteq X_{0} \supseteq X_{-1}= \emptyset
	\end{align*}
	by closed subsets $X_{j}$ such that if $x \in X_{j}-X_{j-1}$ there exists a neighborhood $\mathcal{N}_{x}$ of $x$ in $X$, a compact $(m-j-1)$-dimensional topologically stratified space $L$ with filtration
	\begin{align*}
		L=L_{m-j-1} \supseteq \dots \supseteq L_{1} \supseteq  L_{0} \supseteq L_{-1} = \emptyset,
	\end{align*}  
	and a homeomorphism $\phi : \mathcal{N}_{x} \longrightarrow \mathbb{R}^{j} \times \mathcal{C}(L),$
	where $\mathcal{C}(L)$ is the open cone on $\mathcal{L}$, such that $\phi$ takes $\mathcal{N}_{x} \cap X_{j+i+1}$ homeomorphically onto
	\begin{align*}
		\mathbb{R}^{j} \times \mathcal{C}(L_{i}) \subseteq \mathbb{R}^{j} \times \mathcal{C}(L)
	\end{align*}
	for $m-j-1 \geq i \geq 0$, and $\phi$ takes $\mathcal{N}_{x} \cap X_{j}$ homeomorphically onto
	\begin{align*}
		\mathbb{R}^{j} \times \{ \text{vertex of }\; \mathcal{C}(L) \}.
	\end{align*}
\end{definition}
    
   \begin{remark}\label{linkandstratum}
   	It follows that $X_{j}-X_{j-1}$ is a $j$-dimensional topological manifold. (The empty set is a manifold of any dimension.) We call the connected components of these manifolds the \textbf{strata} of $X$. Any $L$ that satisfies the above properties is referred to as a \textbf{link} of the stratum at $x$.   
   \end{remark}
   In numerous applications, the filtration
   $X_m \supset X_{m-1} \supset \cdots \supset X_0 \supset X_{-1} = \emptyset$
   is less amenable to direct manipulation than its associated stratified decomposition. Specifically, partitioning the space into the disjoint union of strata, $X = \bigsqcup_{j=0}^{m} \left( X_j \setminus X_{j-1} \right),$
   often provides a considerably more tractable framework. This decomposition effectively isolates the incremental layers of the filtration, thereby facilitating clearer combinatorial and topological reasoning---particularly when explicit constructions, cohomological computations, or inductive arguments over the strata are required---as opposed to working with the original filtration, which inherently involves overlapping relational dependencies. The following remark follows immediately from the definitions.
   
   \begin{remark}
   	A decomposition $X = \bigsqcup_{i} S_i$ yields a \textbf{topological stratification} of $X$ if the following conditions hold:
   	
   	\begin{enumerate}
   		\item \textbf{Local finiteness.}
   		The family $\{S_i\}$ is locally finite: every $x \in X$ has a neighbourhood that intersects only finitely many strata.
   		
   		\item \textbf{Manifold strata.}
   		Each $S_i$ is locally closed in $X$ and, with its subspace topology, is a topological manifold of constant dimension $d_i$.
   		
   		\item \textbf{Frontier condition.}
   		If $S_i \cap \overline{S_j} \neq \varnothing$ with $i \neq j$, then $S_i \subset \overline{S_j}$ and $\dim S_i < \dim S_j$.
   		Equivalently, the closure of any stratum is a union of strata.
   		
   		\item \textbf{Local conical structure (topological regularity).}
   		For every $x \in S_i$ there exists an open neighborhood $U$ of $x$ in $X$, a topological stratified space $\mathcal{L}$, a neighborhood $V$ of $x$ in $S_i$, and a stratum-preserving homeomorphism $\phi : U \longrightarrow V \times C(L)$
   		The product and the cone carry the natural stratifications induced by the stratification of $L$.
   	\end{enumerate}
   \end{remark}

    \begin{remark}
    	The \emph{intrinsic stratification} of a stratified space $X$ is defined as follows: one begins by taking the top stratum to be the maximal open submanifold of $X$ and then inductively defines the lower strata according to dimension. For details, we refer the reader to \cite[V]{borel}.
    \end{remark}

    \begin{definition}\label{PSMFD}
    	An \textbf{$m$-dimensional topological pseudomanifold} is a para-compact Hausdorff topological space $X$ which possesses a topological stratification such that $X_{m-1}=X_{m-2}$
    	and $X-X_{m-1}$ is dense in $X$.
    \end{definition}
   Following Goresky and MacPherson \cite{GoreskyMacPherson1983}, we now introduce the perversity parameter, which controls the allowable degree of intersection of chains with the singular strata of a stratified pseudomanifold.
   \begin{definition}
   	A \textbf{perversity} 
   	$\bar{p}: \mathbb{Z}_{ \geq 2} \longrightarrow \mathbb{Z}$ is a function such that $\overline{p}(2)=0$ and
   	$\overline{p}(k+1)- \overline{p}(k) \in \{1,0\}$. The \textbf{complementary perversity} $\bar{q}$ of $\bar{p}$ 
   	is the one with $\overline{p}(k)+\overline{q}(k)=k-2$. Furthermore, the lower middle perversity is defined by $\bar{m}(k)=[\frac{k-2}{2}]$. Its complementary perversity is called the upper middle perversity and is defined by $\bar{n}(k)=[\frac{k-1}{2}]$. 
   \end{definition}

   We now recall the recursive construction of Deligne's sheaf complex, defined stratum by stratum with respect to the perversity function $\overline{p}$. This complex is a central object of study in the present paper.

   \begin{definition}\label{DeSheaf}
   	Let $X^{n}$ be an $n$-dimensional pseudomanifold with a filtration
   	$X = X_{n} \supseteq X_{n-2} \supseteq \cdots \supseteq X_{0} \supseteq \emptyset$, and set
   	$U_{k} = X - X_{n-k}$ for $k \geq 2$, with inclusion maps $i_{k}:U_{k} \hookrightarrow U_{k+1}$. Let $\mathcal{L}$ be a local system on $X-X_{n-2}$. Deligne's sheaf complex $IC^{\overline{p}}_{X}(\mathcal{L})$ with respect to the perversity $\overline{p}$ is defined as 
   	\begin{align*}
   		IC^{\overline{p}}_{X}(\mathcal{L}) = \tau_{\leq \overline{p}(n)-n} Ri_{n \ast}  \cdots \tau_{\leq \overline{p}(3)-n} Ri_{3\ast}   \tau_{\leq \overline{p}(2)-n} Ri_{2 \ast} \mathcal{L}[n].
   	\end{align*} 
   \end{definition}
   
   Let $X$ and $Y$ be topologically stratified spaces. We now recall the definition of a stratified map between them.  
   
    \begin{definition}
   	A continuous map $f: X \longrightarrow Y$ is \emph{stratified} if it satisfies the following two conditions:
   	\begin{enumerate}
   		\item For any connected component $S$ of any stratum $Y_i \setminus Y_{i-1}$, the set $f^{-1}(S)$ is a union of connected components of strata of $X$.
   		\item For each point $p \in Y_i \setminus Y_{i-1}$ there exists a neighborhood $N$ of $p$ in $Y_i$, a topologically stratified space $F = F_k \supset F_{k-1} \supset \cdots \supset F_{-1} = \emptyset$,
   		and a stratum-preserving homeomorphism $F \times N \to f^{-1}(N)$ which commutes with the projection to $N$.
   	\end{enumerate}
   	
   \end{definition}
   
   As shown in \cite{Rourke1967}, a stratum in a topological pseudomanifold (Definition \ref{PSMFD}) does not, in general, possess a tubular neighborhood in the ambient space. However, for the purposes of this work, we require that the strata of a given stratification admit tubular neighborhoods in the ambient stratified space. The existence of such neighborhoods is guaranteed in topological stratified spaces with additional structure, such as Whitney stratified spaces \cite{Mather2012} or Thom--Mather stratified spaces \cite{mather1973stratifications}. However, it turns out that the additional structure required for these spaces is unnecessary for the present work. Accordingly, we restrict our attention to stratified spaces in a purely topological setting, assuming that the strata admit tubular neighborhoods. In particular, we require that the strata of the preimage of the fibered set admit normal bundles in the domain of a given almost covering.

     Let $f: X \longrightarrow Y$ be an almost covering. We will show that, the Gysin sequence of the normal bundle of $f^{-1}(R)$ in $X$, if it exists, determines whether the complex of sheaves $Rf_{\ast} \underline{\mathbb{Q}}_{X}$ admits a decomposition. To be more precise, we are interested in the connecting homomorphisms in the Gysin sequence. In order to avoid any ambiguity, we define the degree of a connecting homomorphism of the Gysin sequence as follows.

        \begin{definition}
        	Let $ \pi : E \longrightarrow B $ be a fiber bundle with fiber $S^{k}$ ($ k \geq 1 $), over a connected base $B$. 
        	Assume for simplicity that the bundle is oriented, so that the fibers carry a consistent orientation. In the Gysin sequence 
        	\begin{align*}
        	\cdots \longrightarrow H^{i}(B) \rightarrow H^{i}(E) \xrightarrow{ \ \delta_{i}  \ } H^{i-k}(B) \rightarrow H^{i+1}(B) \longrightarrow \cdots,
        	\end{align*}
        	we call $\delta_{i}:H^{i}(E) \longrightarrow H^{i-k}(B)$ \textbf{the connecting homomorphism in degree $i$}.
        \end{definition}

       \section{Topology of Almost Covering Maps}\label{TopAlCov}

       In this section, we study the topology of almost covering maps. Although our applications require only special cases, several of the results are established in greater generality.

        Let $R=\bigsqcup_{\beta}T_{\beta}$ be a decomposition of $R$ into topologically stratified spaces $T_{\beta}$. Assume that each $T_{\beta}$ is endowed with a stratification $T_{\beta}= \bigsqcup_{\gamma}V_{\gamma}^{\beta}$. In general, the decomposition $R=\bigsqcup_{\beta,\gamma}V_{\gamma}^{\beta}$ does not yield a stratification of $R$. The following lemma gives sufficient conditions for this decomposition to be a stratification of $R$.

          \begin{lemma}\label{StratR}
       	Let $R$ be a closed topologically stratified space with the intrinsic stratification $R=\bigsqcup_{\alpha} S_{\alpha}$. Consider a finite decomposition $R=\bigsqcup_{\beta} T_{\beta}$ such that each $T_{\beta}$ is a topologically stratified space with a stratification $T_{\beta}=\sqcup_{\gamma} V^{\beta}_{\gamma}$. Assume the following:
    	\begin{enumerate}
		\item The inclusion $\overline{V^{\beta}_{\gamma}} \hookrightarrow R$ is locally flat, for all $\beta$ and $\gamma$.
		\item For each point $r \in R$ there is an open neighborhood that intersects finitely many connected components of finitely many $V^{\beta}_{\gamma}$. 
		\item The collection $\{ V_{\gamma}^{\beta}\}$ satisfies the frontier condition, i.e., $V^{\beta}_{\gamma} \cap \overline{V^{\beta^{\prime}}_{\gamma^{\prime}}} \neq \emptyset$ implies $V^{\beta}_{\gamma} \subseteq \overline{V^{\beta^{\prime}}_{\gamma^{\prime}}}$ for all $\gamma, \gamma^{\prime}, \beta,$ and $\beta^{\prime}$. 
	   \end{enumerate}
	   Then the decomposition $R=\bigsqcup_{\alpha,\beta,\gamma} (V_{\gamma}^{\beta} \cap S_{\alpha})$ is a topological stratification.  
       \end{lemma}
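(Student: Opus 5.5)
We verify directly the four conditions of the Remark characterizing a topological stratification, for the family of pieces $W_{\alpha\beta\gamma}:=V^{\beta}_{\gamma}\cap S_{\alpha}$. Since each $S_\alpha$ is a stratum of the intrinsic stratification of $R$, the intrinsic strata already satisfy all four conditions. The strategy is to show that intersecting with the pieces $V^\beta_\gamma$ preserves each condition; this uses hypotheses (1)–(3) together with the fact that the intrinsic stratification is the coarsest one.

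\emph{Local finiteness.} The family $\{S_\alpha\}$ is locally finite, and by (2) every $r\in R$ has a neighborhood meeting finitely many connected components of finitely many $V^{\beta}_{\gamma}$. Intersecting these two neighborhoods gives a neighborhood meeting only finitely many $W_{\alpha\beta\gamma}$ (taken componentwise).

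\emph{Manifold strata.} Each $S_\alpha$ and each $V^\beta_\gamma$ is locally closed, so $W_{\alpha\beta\gamma}$ is locally closed. To see that it is a manifold of constant dimension, fix $r\in W_{\alpha\beta\gamma}$. Local flatness (1) at $r$ (Definition~\ref{localflatness}) gives distinguished neighborhoods in which the pair formed by $S_\alpha$ and $V^\beta_\gamma$ is the standard pair $(\mathbb{R}^{a},\mathbb{R}^{b})$. Hence near $r$ the intersection is a coordinate plane $\mathbb{R}^{\min(a,b)}$. The dimension is therefore locally constant, and we pass to connected components.

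\emph{Frontier condition.} Suppose $W_{\alpha\beta\gamma}\cap\overline{W_{\alpha'\beta'\gamma'}}\neq\emptyset$. Since $\overline{W_{\alpha'\beta'\gamma'}}\subseteq \overline{S_{\alpha'}}\cap\overline{V^{\beta'}_{\gamma'}}$, the frontier conditions for $\{S_\alpha\}$ and for $\{V^\beta_\gamma\}$ (hypothesis (3)) give $S_\alpha\subseteq\overline{S_{\alpha'}}$ and $V^\beta_\gamma\subseteq\overline{V^{\beta'}_{\gamma'}}$. This is weaker than what we need, namely $W\subseteq\overline{W'}$. The missing step is to use local flatness again: in a flat chart at a point of $W$ the pieces are nested coordinate planes, so every point of $W$ near the chosen point lies in the closure of $W'$. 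A connectedness argument on the component of $W$ then yields $W\subseteq\overline{W'}$, together with the strict dimension inequality when $W\neq W'$.

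\emph{Local conical structure.} This is the step I expect to be the main obstacle. For $r\in W$, I would start from the conical chart of $R$ at $r$ coming from $S_\alpha$, namely $U\cong\mathbb{R}^{a}\times\mathcal{C}(L)$. Local flatness of $\overline{V^\beta_\gamma}$ should let us choose this chart so that each $\overline{V^{\beta'}_{\gamma'}}$ passing through $r$ corresponds to $\mathbb{R}^{b}\times\mathcal{C}(L')$, where $\mathbb{R}^{b}\subseteq\mathbb{R}^{a}$ is a coordinate plane and $L'\subseteq L$ is closed. Splitting $\mathbb{R}^{a}=\mathbb{R}^{w}\times\mathbb{R}^{a-w}$ with $w=\dim W$, and using $\mathbb{R}^{a-w}\times\mathcal{C}(L)\cong\mathcal{C}(S^{a-w-1}\ast L)$, gives a chart $\mathbb{R}^{w}\times\mathcal{C}(L'')$ with link $L''=S^{a-w-1}\ast L$. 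We then stratify $L''$ by the traces of the refined pieces and argue by induction on dimension, applying the lemma to the lower-dimensional stratified space $L''$ with its induced decomposition, where hypotheses (1)–(3) are inherited. The delicate points are making the flat charts for finitely many $V^{\beta'}_{\gamma'}$ simultaneously compatible, which local finiteness (2) reduces to a finite problem, and checking that the hypotheses really pass to the link.
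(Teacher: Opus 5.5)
\textbf{Verdict: genuine gap.} Your outline follows the paper's proof closely. Local flatness gives the manifold strata. The cone step uses a chart $\mathbb{R}^{a}\times\mathcal{C}(L)$ in which $V^{\beta}_{\gamma}$ is a coordinate plane, followed by $\mathbb{R}^{a-w}\times\mathcal{C}(L)\cong\mathcal{C}(S^{a-w-1}\ast L)$. However, the step you flag as delicate is a gap that hypotheses (1)--(3) cannot close.

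Hypothesis (1) constrains each closure $\overline{V^{\beta}_{\gamma}}$ \emph{separately}, and in Definition~\ref{localflatness} it only constrains pairs of strata through a given point. It never produces a single chart in which several closures through $r$ are standard at once. Local finiteness does not help, because the failure already occurs with two pieces.

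Concretely, take $R=S^{3}$, which has a single intrinsic stratum. Take two tame arcs $A_{1},A_{2}$ meeting only in a common endpoint $p$ whose union is wild at $p$; such pairs exist, going back to Fox and Artin. Let $T_{1}=A_{1}\cup A_{2}$, stratified by $p$, the two remaining endpoints and the two open arcs, and let $T_{2}=S^{3}\setminus T_{1}$. Every closure is a tame arc, a point or $S^{3}$, so (1) holds even in the strongest sense, and (2) and (3) are clear. Yet a stratum-preserving chart $\mathcal{C}(L)$ at $p$ would force $L\cong S^{2}$ with the arc meeting it in two points. That would make $A_{1}\cup A_{2}$ flat at $p$, a contradiction. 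So no induction on links can rescue the cone step.

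The same missing ingredient breaks your frontier argument. Local flatness at $r\in W$ says nothing about how $W'$, which does not contain $r$, approaches $r$. So the picture of ``nested coordinate planes'' including $W'$ is not available from the hypotheses.

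The paper's own proof does not fill this hole either. It asserts that $\phi\circ\iota\circ\psi^{-1}$ is a stratum-preserving inclusion that is the identity on $\mathbb{R}^{n^{\beta_0}_{\gamma_0}}$. It then declares $\mathbb{R}^{w}\times\mathcal{C}(S^{a-w-1}\ast L)$ a distinguished neighborhood without controlling the pieces coming from the other $T_{\beta'}$. It also treats the frontier condition as ``satisfied by assumption'', although (3) concerns only the $V^{\beta}_{\gamma}$, not their intersections with the $S_{\alpha}$; you correctly noticed this.

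What both arguments actually need is a joint flatness hypothesis. Near each $r$ there should be one conical chart of $R$ in which all pieces whose closures contain $r$ are simultaneously of the form $\mathbb{R}^{w}\times\mathcal{C}(\lambda)$, with $\lambda$ running over strata of a stratified link. Under such an assumption your join argument goes through. So does a local-to-global frontier argument: the set $W\cap\overline{W'}$ is closed in $W$ and open by the chart, so connectedness of $W$ gives $W\subseteq\overline{W'}$. Without such an assumption, the lemma as stated fails by the example above.
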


       \begin{proof}
    	We verify the four axioms of a topological stratification (frontier condition, manifold strata, local finiteness, and the local cone condition).
	
    	\noindent 
    	Each $V^{\beta}_{\gamma}$ is a stratum of the topologically stratified space $T_\beta$, hence a topological manifold with the subspace topology from $T_\beta$, which coincides with the subspace topology from $R$. Let $r \in V_{\gamma}^{\beta} \cap S_{\alpha}$. If $\dim(V_{\gamma}^{\beta}) \leq \dim(S_{\alpha})$, then from the local flatness assumption it follows that there exists and open neighborhood $U$ of $r$ in $R$ such that $U \cap (S_{\alpha} \cap V_{\gamma}^{\beta}) \cong \mathbb{R}^{\dim(V_{\gamma}^{\beta})}$. Otherwise, if $\dim(V_{\gamma}^{\beta}) \geq \dim(S_{\alpha})$ we arrive at $U \cap (S_{\alpha} \cap V_{\gamma}^{\beta}) \cong \mathbb{R}^{\dim(S_{\alpha})}$. Therefore every element of the decomposition $R=\bigsqcup_{\alpha,\beta,\gamma} (V_{\gamma}^{\beta}\cap S_{\alpha})$ is a topological manifold. Furthermore, the local finiteness and the frontier condition are satisfied by assumption.
	
    	\noindent
    	It remains to construct for every $r\in R$ a stratified homeomorphism from an open neighborhood of $r$ to a cone over a link, compatible with the strata $V^\beta_\gamma$. 
    	Let $r\in V^{\beta_0}_{\gamma_0}$ and denote by $S_{\alpha_0}$ the stratum of the original stratification $R=\bigsqcup_{\alpha}S_{\alpha}$ containing $r$.

    	\noindent
    	Assume $\dim( V^{\beta_0}_{\gamma_0}) \leq \dim(S_{\alpha_{0}})$. 
    	Since $R$ is topologically stratified, there exists an open neighborhood $U$ of $r$ in $R$ and a stratified homeomorphism $\phi: U \xrightarrow{\cong} \mathbb{R}^{n_{\alpha_0}} \times C(L)$, where $C(L)$ is the cone on the compact topologically stratified space $L$, $n_{\alpha_0 } = \dim S_{\alpha_0}$, and $\phi(r) = (0,\text{cone point})$. The homeomorphism sends $U \cap S_{\alpha_0}$ to $\mathbb{R}^{n_{\alpha_0 }} \times \{\text{cone point}\}$, and the stratification of $U$ is the product of the trivial stratification on $\mathbb{R}^{n_{\alpha_0}}$ and the stratification of the cone over of $L$.
	
    	\noindent
    	Since $S_{\alpha_0} \cap V^{\beta_0}_{\gamma_0} \hookrightarrow S_{\alpha_0}$ is locally flat, after applying a stratum‑preserving homeomorphism of $\mathbb{R}^{n_{\alpha_0}}$ we may assume that $\phi\bigl(U\cap V^{\beta_0}_{\gamma_0}\bigr)= \mathbb{R}^{n_{\gamma_0}^{\beta_0}} \times \{0\} \times \{\text{cone point}\}$
    	inside $\mathbb{R}^{n_{\gamma_0}^{\beta_0}} \times \mathbb{R}^{n_{\alpha_0}-n_{\gamma_0}^{\beta_0}} \times C(L)$.
    	Since $T_{\beta_0}$ is a topologically stratified space, there exists a stratified homeomorphism $\psi : U\cap T_{\beta_0} \xrightarrow{\cong} \mathbb{R}^{n_{\gamma_0}^{\beta_0}} \times C(L')$ with $\psi(r)=(0,\text{cone point})$ and $\psi\bigl(U\cap V^{\beta_0}_{\gamma_0}\bigr) = \mathbb{R}^{n_{\gamma_0}^{\beta_0}}\times\{\text{cone point}\}$, where $L'$ is a compact stratified space (a link of $V^{\beta_0}_{\gamma_0}$ in $T_{\beta_0}$). Using $\phi$, $\psi$, and the inclusion $\iota:U\cap T_{\beta_0}\hookrightarrow U$, we obtain a stratum‑preserving, locally flat inclusion 
    	\begin{align*}
		f = \phi\circ\iota\circ\psi^{-1} : \mathbb{R}^{n_{\gamma_0}^{\beta_0}} \times C(L') \hookrightarrow \mathbb{R}^{n_{\gamma_0}^{\beta_0}} \times \mathbb{R}^{n_{\alpha_0}-n_{\gamma_0}^{\beta_0}} \times C(L)(\cong \mathbb{R}^{n_{\gamma_{0}}^{\beta_{0}} } \times C(S^{n_{\alpha_{0}}-n^{\beta_{0}}_{\gamma_{0}}-1 } \ast L)),
    	\end{align*}
    	which is the identity on $\mathbb{R}^{n_{\gamma_0}^{\beta_0}}$. Thus we obtain the commutative diagram
    	\begin{align*}
		\begin{tikzcd}[ampersand replacement=\&]
			U \cap (V^{\beta_{0}}_{\gamma_{0}} \cap S_{\alpha_{0}})(\cong \mathbb{R}^{n_{\gamma_{0}}^{\beta_{0}}}) \arrow[r, hookrightarrow] \arrow[d, hookrightarrow] \& U \cap T_{\beta_{0}} (\cong \mathbb{R}^{n_{\gamma_{0}}^{\beta_{0}}} \times C(L')) \arrow[d, hookrightarrow] \\
			U \cap S_{\alpha_0} \bigl(\cong \mathbb{R}^{n_{\gamma_{0}}^{\beta_{0}}} \times C(S^{(n_{\alpha_0}-n_{\gamma_{0}}^{\beta_{0}}-1)})\bigr) \arrow[r, hookrightarrow] \& U \bigl(\cong \mathbb{R}^{n_{\gamma_{0}}^{\beta_{0}}} \times C(S^{(n_{\alpha_0}-n_{\gamma_{0}}^{\beta_{0}}-1)} \ast L) \bigr)
		\end{tikzcd}.
    	\end{align*}
    	Note that all maps are stratum-preserving. In particular, the inclusion $	U \cap V^{\beta_{0}}_{\gamma_{0}} (\cong \mathbb{R}^{n_{\gamma_{0}}^{\beta_{0}}}) \hookrightarrow U \bigl(\cong \mathbb{R}^{n_{\gamma_{0}}^{\beta_{0}}} \times C(S^{(n_{\alpha_0}-n_{\gamma_{0}}^{\beta_{0}}-1)} \ast L) \bigr)$ maps $\mathbb{R}^{n_{\gamma_{0}}^{\beta_{0}}}$ to $\mathbb{R}^{n_{\gamma_{0}}^{\beta_{0}}} \times \{\text{cone point}\}$. Hence, the open neighborhood $U( \cong \mathbb{R}^{n_{\gamma_{0}}^{\beta_{0}}} \times C(S^{(n_{\alpha_0}-n_{\gamma_{0}}^{\beta_{0}}-1)} \ast L))$ provides a distinguished neighborhood of the point $r \in  V^{\beta_{0}}_{\gamma_{0}} \cap S_{\alpha_{0}}$ in $R$. Otherwise, if $\dim( V^{\beta_0}_{\gamma_0}) \geq \dim(S_{\alpha_{0}})$ a similar reasoning shows a distinguished neighborhood of $r$ in $R$ provides the local cone condition. The claim follows.
        \end{proof}

        In this work, we only consider the decompositions of $R$ that induce a stratification on $R$ in the sense of the above lemma. This motivates the following definition.

       \begin{definition}\label{SutStrat}
    	Let $R$ be a closed topologically stratified space with an intrinsic stratification $R=\bigsqcup_{\alpha} S_{\alpha}$. Consider a decomposition $R=\bigsqcup_{\beta} T_{\beta}$ such that each $T_{\beta}$ is a topologically stratified space with a stratification $T_{\beta}=\bigsqcup_{\gamma} V^{\beta}_{\gamma}$, and the conditions in Lemma \ref{StratR} are satisfied. We call the decomposition $R=\bigsqcup_{\beta} T_{\beta}$ a \textbf{suitable} decomposition. The stratification $R=\bigsqcup_{\alpha, \beta,\gamma}( V^{\beta}_{\gamma} \cap S_{\alpha})$ is referred to as the stratification induced by the suitable decomposition.
    \end{definition}

    Given an almost covering map $f:X \longrightarrow Y$ with a suitable decomposition of the fibered set, namely $R = \bigsqcup_{\beta}T_{\beta}$, and assuming that $R \subset Y$ is a closed topologically stratified space of codimension 2 and $Y$ is a closed topological pseudomanifold, one can use the stratification on $R$ induced by the suitable decomposition to refine the intrinsic stratification on $Y$.

    \begin{proposition}\label{RefinedStra}
	Let $f:X \longrightarrow Y$ be an almost covering map, where $X$ is a closed topological manifold and $Y$ is a closed topological pseudomanifold. Suppose the fibered set $R = \bigsqcup_{\beta} T_{\beta} \subset Y$ is equipped with a suitable decomposition. Assume that $R$ is a closed topologically stratified space of codimension greater than or equal to $2$. Let $R = \bigsqcup_{\alpha} S_{\alpha}$, $T_{\beta} = \bigsqcup_{\gamma} V^{\beta}_{\gamma}\ \text{for each } \beta$, and 
	$Y = \bigsqcup_{\eta} W_{\eta}$
	be intrinsic stratifications. Furthermore, assume that the inclusion $R \xhookrightarrow[]{\phantom{.} i \phantom{.} } Y$ is locally flat. Then the decomposition
	\begin{align}
	Y = \bigl( W_{\eta_{\text{top}}} \setminus (R \cap W_{\eta_{\text{top}}}) \bigr) \;\bigsqcup_{\alpha,\eta,\beta,\gamma}\; \bigl( W_{\eta} \cap V^{\beta}_{\gamma} \cap S_{\alpha}\bigr)
	\end{align}
	is a refined stratification on $Y$, where $W_{\eta_{\text{top}}}$ denotes the top stratum of $Y$.
    \end{proposition}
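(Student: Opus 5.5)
The plan is to verify the four axioms of a topological stratification, in the form given in the remark following Definition~\ref{pseudomanifolds}, for the decomposition in the statement. I will reduce most of the work to Lemma~\ref{StratR} and to the local flatness of $R\hookrightarrow Y$ (Definition~\ref{localflatness}).

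First, by Lemma~\ref{StratR} and Definition~\ref{SutStrat}, the pieces $V^{\beta}_{\gamma}\cap S_{\alpha}$ form a topological stratification of $R$. Apply Lemma~\ref{StratR} a second time, now inside $Y$, to the stratified subspaces $R$ and $W_\eta$. Local flatness of $R\hookrightarrow Y$ gives, near a point $r\in W_\eta\cap V^\beta_\gamma\cap S_\alpha$, charts in which the two strata through $r$ sit as standard linear subspaces. Hence each intersection $W_\eta\cap V^\beta_\gamma\cap S_\alpha$ is a topological manifold.

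Next, the open piece $W_{\eta_{\text{top}}}\setminus(R\cap W_{\eta_{\text{top}}})$ is an open subset of the manifold $W_{\eta_{\text{top}}}$, hence a manifold of dimension $n$. It is dense in $Y$, because $R$ is nowhere dense and $Y\setminus W_{\eta_{\text{top}}}$ has codimension at least $2$. It follows that $Y_{n-1}=Y_{n-2}$, so the refined filtration is again a pseudomanifold filtration.

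For local finiteness, use compactness of $Y$ together with the finiteness of the decomposition $\{T_\beta\}$ and condition (2) of Lemma~\ref{StratR}. For the frontier condition, use the frontier conditions of $\{W_\eta\}$, $\{S_\alpha\}$ and $\{V^\beta_\gamma\}$. Closures of intersections are contained in intersections of closures, and local flatness shows that locally the closure of $W_\eta\cap V^\beta_\gamma\cap S_\alpha$ is exactly the union of the smaller intersections meeting it. This yields the frontier condition, together with the required strict inequality of dimensions.

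The main obstacle is the local cone condition at a point $r\in W_{\eta_0}\cap V^{\beta_0}_{\gamma_0}\cap S_{\alpha_0}$, with $d$ the dimension of this intersection. I will imitate the proof of Lemma~\ref{StratR}. Start from a distinguished neighborhood $U\cong\mathbb{R}^{\dim W_{\eta_0}}\times C(L)$ of $r$ in $Y$. Use local flatness to straighten $U\cap R$ so that the stratum of $R$ through $r$ becomes $\mathbb{R}^{d}\times\{0\}\times\{\text{cone point}\}$. Then rewrite
\[
\mathbb{R}^{\dim W_{\eta_0}}\times C(L)\cong\mathbb{R}^{d}\times C\bigl(S^{\dim W_{\eta_0}-d-1}\ast L\bigr).
\]
The new link is $S^{\dim W_{\eta_0}-d-1}\ast L$, stratified by intersecting with the traces of the strata of $R$; these traces are cones over their own links by the first step. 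The delicate point is showing that this refined link is itself a topologically stratified space of dimension $n-d-1$. I would argue this by induction on $\dim Y$: the link is a compact pseudomanifold of smaller dimension containing the locally flat stratified subspace $R\cap\text{link}$, so the proposition applies to it inductively. Finally, if $\dim W_{\eta_0}$ is at least the dimension of the stratum of $R$ through $r$, the roles of the two strata are interchanged, exactly as in the last sentence of the proof of Lemma~\ref{StratR}.
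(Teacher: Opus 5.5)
There is a genuine gap: you never check that the listed pieces exhaust $Y$. Their union is $(W_{\eta_{\text{top}}}\setminus R)\cup R$. This equals $Y$ only if every non-top stratum $W_\eta$ is contained in $R$, i.e.\ $Y_{\text{sing}}\subset R$; pieces of the form $W_\eta\setminus R$ with $\eta\neq\eta_{\text{top}}$ do not appear in the statement.

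Notice that your argument never uses $f$ or $X$. Without them the statement is false: if $Y$ has a singular point outside a locally flat $R$, that point lies in no piece of the decomposition. Establishing $Y_{\text{sing}}\subset R$ is the first step of the paper's proof, and it is the only place where the almost-covering hypothesis enters. For $y\in Y\setminus R$ and $x\in f^{-1}(y)$, $f$ is a covering over $Y\setminus R$ and $X$ is an $n$-manifold. So a chart $U\cong\mathbb{R}^{n}$ around $x$ maps homeomorphically onto an open neighborhood of $y$. Hence $y$ is a manifold point and lies in $W_{\eta_{\text{top}}}$. Only once this is established is it enough to verify the cone condition at points of $R$, as you do; at points of $W_{\eta_{\text{top}}}\setminus R$ it is trivial.

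A secondary problem is the induction you propose for the refined link in the cone step. The proposition cannot be applied to $S^{\dim W_{\eta_0}-d-1}\ast L$, because nothing provides an almost covering from a closed manifold onto it with fibered set $R\cap(S^{\dim W_{\eta_0}-d-1}\ast L)$. You would need an auxiliary statement not involving $f$, whose key hypothesis is again that the complement of $R$ lies in the top stratum. You would then have to check that this hypothesis passes to links.

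Apart from this, your treatment of the cone condition is the same as the paper's: straighten by local flatness and rewrite $\mathbb{R}^{\dim W_{\eta_0}}\times C(L)\cong\mathbb{R}^{d}\times C(S^{\dim W_{\eta_0}-d-1}\ast L)$. The paper simply asserts that this rewritten neighborhood is distinguished, without addressing the stratification of the new link.
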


    \begin{proof}
    Let $Y_{\text{sing}} \subset Y$ denote the singular part of $Y$, and let $n=\dim(X)=\dim(Y)$. Assume that $Y_{\text{sing}} \setminus R \neq \emptyset$. Since $R \subset Y$ is a closed subspace, it follows that $(Y \setminus R)$ is open in $Y$. Given $y \in Y_{\text{sing}} \setminus R$ and $x \in X$ such that $f(x)=y$, there exists an open neighborhood $U \cong \mathbb{R}^{\dim(X)}$ of $x$ such that $f \vert_{U}$ is a homeomorphism. Hence $f(U) \cong \mathbb{R}^{\dim(Y)}$ is an open neighborhood of $y \in Y_{\text{sing}}$, which contradicts the fact that $y$ is a singular point. It follows that $Y_{\text{sing}} \subset R$.
    
    \noindent
    By Lemma \ref{StratR}, it follows that the decomposition $R=\bigsqcup_{\alpha,\beta,\gamma}(V^{\beta}_{\gamma}\cap S_{\alpha})$ is a stratification. Choose a distinguished open neighborhood of $r$ in $Y$, denoted by $U \cong \mathbb{R}^{n_{\eta}} \times C(L)$, where $n_{\eta}=\dim(W_{\eta})$, $L$ is a link of $r$ in $Y$, and the homeomorphism is stratum-preserving. Let $l=\dim(V^{\beta}_{\gamma} \cap S_{\alpha})$ and assume that $l \leq n_{\eta}$. The local flatness of the inclusion $R \xhookrightarrow[]{\phantom{.} i \phantom{.} } Y$ implies the existence of the following commutative diagram
    \begin{align*}
    	\begin{tikzcd}[ampersand replacement=\&]
    	U \cap V^{\beta}_{\gamma}  \arrow[r, hookrightarrow] \arrow[d,"\cong"] \& U \cap W_{\eta}  \arrow[d,"\cong"] \\
     \mathbb{R}^{l} \arrow[r, hookrightarrow] \& \mathbb{R}^{n_{\eta}}
    \end{tikzcd},
    \end{align*}
    such that the inclusion $\mathbb{R}^{l} \hookrightarrow \mathbb{R}^{n_{\eta}}$ is homeomorphic to the standard inclusion. We can rewrite the previous inclusion as $\mathbb{R}^{l } \hookrightarrow \mathbb{R}^{l} \times C(S^{n_{\eta}-l-1 })$, where $\mathbb{R}^{n^{\beta}_{\gamma} }$ is mapped to $\mathbb{R}^{n^{\beta}_{\gamma} } \times \{\text{cone point} \}$.

    \noindent
    Since $R$ is a topologically stratified space, for $U$ chosen sufficiently small, we have a stratum-preserving homeomorphism $U \cap R \cong \mathbb{R}^{l} \times C(L')$, where $L'$ is a link of the point $r$ in $R$. Note that the composition of  the homeomorphism $U \cong \mathbb{R}^{n_{\eta}} \times C(L)$, the inclusion  $R \xhookrightarrow[]{\phantom{.} i \phantom{.} } Y$, and a stratum-preserving inverse homeomorphism of $U \cap R \cong \mathbb{R}^{l} \times C(L')$ yields a stratum-preserving inclusion $\mathbb{R}^{l} \times C(L') \hookrightarrow \mathbb{R}^{l} \times C(S^{n_{\eta}-l-1}\ast L)$, that maps $\mathbb{R}^{l} \times \{\text{cone point}\}$ to $\mathbb{R}^{l} \times \{\text{cone point}\}$, where $S^{n_{\eta}-l-1}\ast L$ is the topological join of the spaces $S^{n_{\eta}-l-1}$ and $L$. Hence, we arrive at the following commutative diagram 
   \begin{align*}
   	\begin{tikzcd}[ampersand replacement=\&]
   		U \cap (V^{\beta}_{\gamma}\cap S_{\alpha} \cap W_{\eta})(\cong 	\mathbb{R}^{l})  \arrow[r, hookrightarrow] \arrow[d,hookrightarrow] \& U \cap W_{\eta}\big( \cong \mathbb{R}^{l} \times C(S^{n_{\eta}-l-1})  \big)  \arrow[d,hookrightarrow] \\ U \cap R (\cong \mathbb{R}^{l} \times C(L'))
   	 \arrow[r, hookrightarrow] \& U \big(\cong \mathbb{R}^{l} \times C(S^{n_{\eta}-l-1}\ast L)\big)
   	\end{tikzcd},
   \end{align*}
    in which all inclusions are stratum-preserving. Hence, the inclusion $	U \cap (V^{\beta}_{\gamma} \cap S_{\alpha} \cap W_{\eta})(\cong 	\mathbb{R}^{l})  \hookrightarrow  U \big(\cong \mathbb{R}^{l} \times C(S^{n_{\eta}-l-1}\ast L)\big)$ maps $\mathbb{R}^{l}$ to $\mathbb{R}^{l} \times \{\text{cone point}\}$. As a result, $U(\cong \mathbb{R}^{l} \times C(S^{n_{\eta}-l-1}\ast L))$ provides a distinguished neighborhood of the point $r$ in $Y$.
    
    \noindent
    If $n_{\eta} \leq l$, we use the same arguments as before and arrive at the following commutative diagram
      \begin{align*}
    	\begin{tikzcd}[ampersand replacement=\&]
    	U \cap (V^{\beta}_{\gamma} \cap S_{\alpha} \cap W_{\eta})(\cong 	\mathbb{R}^{n_{\eta}})  \arrow[r,"\cong"] \arrow[d,hookrightarrow] \& U \cap W_{\eta} ( \cong \mathbb{R}^{n_{\eta}})  \arrow[d,hookrightarrow] \\ U \cap R (\cong \mathbb{R}^{l} \times C(L'))
    	\arrow[r, hookrightarrow] \& U \big(\cong \mathbb{R}^{l} \times C(L)\big)
    \end{tikzcd},
    \end{align*}
    in which all inclusions are stratum-preserving. Consequently, the inclusion $	U \cap (V^{\beta}_{\gamma} \cap S_{\alpha} \cap W_{\eta})(\cong 	\mathbb{R}^{n_{\eta}})  \hookrightarrow  U \big(\cong \mathbb{R}^{n_{\eta}} \times C( L)\big)$, maps $\mathbb{R}^{n_{\eta} }$ to $\mathbb{R}^{n_{\eta} } \times \{\text{cone point}\}$. As a result, $U$ provides a distinguished neighborhood of the point $r$ in $Y$. The claim follows.
    \end{proof}

    For the purposes of this work, we still need to show that in the situation of the preceding proposition, the pull-back of the refined stratification on $Y$ under the almost covering map $f$ induces a stratification on $X$. Note that the total space of a fiber bundle over a topologically stratified space with arbitrary fibers need not itself be stratified in general. The following elementary lemma shows that the total space of a fiber bundle over a stratified space with stratified fibers admits a stratification.
    \begin{lemma}\label{StratTotSpac}
   	Let $f:E \longrightarrow T$ be a fiber bundle whose fiber is a stratified space $F$, such that the local trivializations are stratum-preserving homeomorphisms. If $T$ is a stratified space, then $E$ admits a stratification under which the map $f$ is stratified.
    \end{lemma}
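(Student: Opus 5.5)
The plan is to define the stratification of $E$ as the pullback of the strata of $T$ intersected with the fiberwise strata of $F$, transported by local trivializations. Let $T=\bigsqcup_{i} S_i$ be the stratification of $T$ and $F=\bigsqcup_{j} F_j$ that of $F$. For a point $e\in E$ with $f(e)\in S_i$, choose a trivialization $\varphi_U: f^{-1}(U)\xrightarrow{\cong} U\times F$ over an open set $U\ni f(e)$. Declare $e$ to lie in the piece $E_{i,j}$ if $\varphi_U(e)\in (U\cap S_i)\times F_j$. The first step is to check that this is independent of the chosen trivialization. On overlaps the transition maps $U\cap U'\times F\to U\cap U'\times F$ have the form $(t,x)\mapsto (t, g_t(x))$. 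Since the trivializations are stratum-preserving, each $g_t$ maps strata of $F$ to strata of $F$ of the same dimension. Possibly after grouping strata by the orbit of the structure group action (a component-level bookkeeping issue), the pieces $E_{i,j}$ are then well defined. I would take the strata of $E$ to be the connected components of the $E_{i,j}$. Each of these is locally homeomorphic to $(U\cap S_i)\times F_j$, hence a manifold of dimension $\dim S_i+\dim F_j$.

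Next I verify the axioms listed in the remark following Remark~\ref{linkandstratum}, all locally in a chart $U\times F$, where they reduce to the product case. Local finiteness follows because $T$ and $F$ are locally finite and, since $F$ is a fiber of a bundle with trivializations, a neighborhood of $e$ meets only finitely many products $S_i\times F_j$. The frontier condition holds because $\overline{S_i\times F_j}=\overline{S_i}\times\overline{F_j}$ locally, and this is a union of products of strata. The dimension inequality is the sum of the inequalities in $T$ and $F$.

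For the local cone condition, take distinguished neighborhoods $\mathbb{R}^{a}\times C(L_1)$ of $f(e)$ in $T$ and $\mathbb{R}^{b}\times C(L_2)$ of the fiber coordinate of $e$ in $F$. Their product is $\mathbb{R}^{a+b}\times C(L_1)\times C(L_2)\cong \mathbb{R}^{a+b}\times C(L_1\ast L_2)$, using the standard stratified homeomorphism $C(L_1)\times C(L_2)\cong C(L_1\ast L_2)$, where the join carries the join stratification. Composing with $\varphi_U^{-1}$ gives a distinguished neighborhood of $e$. Finally, $f$ is stratified: $f^{-1}(S_i)$ is the union of the $E_{i,j}$, and over a neighborhood $N\subset S_i$ the map $\varphi_U$ restricts to a stratum-preserving homeomorphism $N\times F\to f^{-1}(N)$ commuting with the projection.

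I expect two main obstacles. The first is the well-definedness of the pieces under transition functions when the structure group permutes strata of $F$, which is handled by taking $f$-saturated components as above. The second is the join identification of product cones, which is standard but requires checking that the join stratification is a topological stratification.
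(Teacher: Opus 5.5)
Your proposal is correct and follows essentially the same route as the paper: you glue the product stratifications of the local charts $(U\cap S_i)\times F$ along stratum-preserving transition maps, and you obtain distinguished neighborhoods from $C(L_T)\times C(L_F)\cong C(L_T\ast L_F)$. The only difference is cosmetic. The paper avoids your well-definedness bookkeeping by passing to the intrinsic stratification of $F$, which every homeomorphism $F\to F$ preserves, whereas you use the stratum-preserving hypothesis directly; there, working with the filtration $F_b$ (grouping strata by dimension) already suffices.
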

    \begin{proof}
    	Consider the intrinsic stratification on $F$. Then any homeomorphism $F \longrightarrow F$ is stratum-preserving. Note that this is not true for an arbitrary choice of stratification on $F$.

    	\noindent
    	Choose a trivializing cover $\{U_{\sigma}\}$ of $T$. Hence, for each pair $U_{\alpha},U_{\beta} \in \{U_{\sigma}\}$ and each $t \in U_{\alpha} \cap U_{\beta}$ the transition maps $g_{\alpha \beta}:F \longrightarrow F$ preserve the intrinsic stratification on $F$.
    	
    	\noindent
    	Let $V_{\gamma}$ be a stratum of $T$. The restriction of $f$ to $E \vert_{V_{\gamma}}:=f^{-1}(V_{\gamma})$, is a fiber bundle over the manifold $V_{\gamma}$ with fibers homeomorphic to the stratified space $F$. The collection $\{U_{\sigma} \cap V_{\gamma}\}$ forms an open cover of $V_{\gamma}$, and the restrictions of the local trivializations give a trivializing cover for $E \vert_{V_{\gamma}}$. Consider the stratification on $(U_{\sigma} \cap V_{\gamma}) \times F $ which is induced by the product of the intrinsic stratification on the fiber $F$ and the trivial stratification on the manifold $U_{\sigma} \cap V_{\gamma}$. Since the transition maps are stratum-preserving, gluing the local charts $(U_{\sigma} \cap V_{\gamma}) \times F $ using the transition maps yields a stratification on $E\vert_{V_{\gamma}}$.
    	
    	\noindent
    	Let $e \in E \vert_{V_{\gamma}}$ and set $t=f(e)$. Since $T$ is a stratified space, there exists a distinguished neighborhood $U_{T} \subset T$ of $t$ admitting a stratum-preserving homeomorphism $U_{T} \cong \mathbb{R}^{k} \times C(L_{T})$, where the stratified space $L_{T}$ is a link of the point $t$ in $T$. Moreover, shrinking $U_{T}$ if necessary, we have $f^{-1}(U_{T}) \cong U_{T} \times F$. The topological space $F$ is stratified and hence there is a distinguished neighborhood $U_{F} \cong \mathbb{R}^{l} \times C(L_{F})$, where $L_{F}$ is a stratified space and the homeomorphism is stratum-preserving, so that $U_{T} \times U_{F}$ is an open neighborhood of $e$. Note that there is a stratum-preserving homeomorphism $C(L_{T}) \times C(L_{F}) \cong C(L_{T} \ast L_{F})$, where $L_{T} \ast L_{F}$ denotes the join of $L_{T}$ and $L_{F}$. Hence $U_{T} \times U_{F} \cong \mathbb{R}^{k+l} \times C(L_{T} \ast L_{F})$, thereby providing a distinguished neighborhood of $e$ in $E$. Moreover, this homeomorphism is stratum-preserving. The claim follows.
    \end{proof}
    
    Note that the stratification which we obtain by the above lemma is not, in general, the intrinsic stratification of the space $E$.

    Let $f:X \longrightarrow Y$ be an almost covering with the fibered set $R$. Assume the setup of Lemma \ref{StratR} and consider the stratification $R=\bigsqcup_{\alpha,\beta,\gamma} (V_{\gamma}^{\beta} \cap S_{\alpha})$. If the hypotheses of the preceding lemma are satisfied for the fiber bundle $f \vert_{f^{-1}(V_{\gamma}^{\beta} \cap S_{\alpha})}$, then the topological space $f^{-1}(V_{\gamma}^{\beta} \cap S_{\alpha})$ admits a stratification. Now, assume that $(V_{\gamma}^{\beta} \cap S_{\alpha}) \cap \overline{(V_{\gamma'}^{\beta'} \cap S_{\alpha'})} \neq \emptyset$. Note that the fibers $f^{-1}(r)$ and $f^{-1}(r')$, where $r \in V_{\gamma}^{\beta} \cap S_{\alpha}$ and $r' \in V_{\gamma'}^{\beta'} \cap S_{\alpha'}$, are in general not homeomorphic. We still need to address the question of under which conditions the topological space $f^{-1}(\overline{V_{\gamma'}^{\beta'} \cap S_{\alpha'}})$ can be stratified.

    To this end, we first consider the following simplified setting. Let $f:X \longrightarrow [0,1]$ be a proper surjection such that $f^{-1}((0,1]) \cong F_{1} \times (0,1]$, where $F_{1}$ is a stratified space. Furthermore, we assume that the topological space $f^{-1}(\{0\})$ is stratified. As the following example shows, there does not in general exist a canonical map $g$ from $F_{1}$ to $F_{0}$ such that $X$ is homeomorphic to the mapping cylinder $\operatorname{Cyl}(g)$. In fact, merely requiring that the fibers $F_{0}$ and $F_{1}$ are topologically stratified spaces does not determine the topology of the space $f^{-1}([0,\epsilon])$ for arbitrarily small $\epsilon$, as the following example shows. However, within these more refined frameworks for stratified spaces, the existence of refined stratifications with respect to which the map $f$ is stratified follows automatically from well-known results and therefore does not require a separate proof.

    \begin{example}
    	Let \(X \subset \mathbb{R}^3\) be given in cylindrical coordinates by
    	\begin{align*}
    	X = \bigl\{ (\cos(1/t),\, \sin(1/t),\, t) \mid t \in (0,1) \bigr\}
    	\cup \bigl\{ (\cos\theta,\, \sin\theta,\, 0) \mid \theta \in [0,2\pi) \bigr\}.
    	\end{align*}
    	Define \(f \colon X \to [0,1)\) by \(f(r,\theta,z) = z\) (i.e., projection onto the \(z\)-coordinate). Then
    	\begin{itemize}
    		\item \(f^{-1}(0) = S^1\) (the unit circle), which is a compact 1‑manifold, hence a topologically stratified space.
    		\item $f^{-1}\bigl((0,1)\bigr) = \bigl\{ (\cos(1/t),\sin(1/t),t) \mid t \in (0,1) \bigr\}$ is homeomorphic to $\{\text{pt}\} \times (0,1)$. Thus we may take $F_1 = \{\text{pt}\}$ (a single point), a $0‑$manifold and therefore stratified.
    		\item \(f\) is continuous and proper: the preimage of any compact interval $[0,\varepsilon]$ is the union of the circle and the tail $\{ t \in (0,\varepsilon] \}$ of the spiral; this union is compact.
    	\end{itemize}
    	As \(t \to 0^+\), the point \((\cos(1/t),\sin(1/t),t)\) accumulates on the \emph{entire} circle \(S^1\). Consequently, there is no well‑defined (continuous) map \(g \colon \{\text{pt}\} \to S^1\) that sends the unique point of \(F_1\) to its ``limit'' in \(F_0\). Hence, from the mere assumptions of properness and stratified fibers, one cannot deduce the existence of an induced map $g$. Note also that the space $X$ in this example does not admit a topological stratification.
    \end{example}

    Hence, in the situation of the above example, we need a minimal condition on the space $X$ that controls the topology of the preimage $f^{-1}([0,\epsilon])$ for arbitrarily small $\epsilon$.
    
    \begin{lemma}\label{FibersMap}
    Let $f:X \longrightarrow [0,1]$ be a proper surjection such that the restriction $f\vert_{f^{-1}((0,1])}:f^{-1}((0,1]) \longrightarrow (0,1]$ is a trivial fiber bundle with closed stratified fibers; i.e., $f^{-1}((0,1]) \cong F_{1}\times (0,1]$,
    where $F_{1}$ is a closed stratified space. Suppose that the fiber $F_{0}:=f^{-1}(\{0\})$
    is a closed stratified space. If $X$ is a topologically stratified space, then there exists a map $g:F_{1}\longrightarrow F_{0}$ such that $X$ is homeomorphic to the mapping cylinder $\operatorname{Cyl}(g)$.
    \end{lemma}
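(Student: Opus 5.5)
The plan is to exploit the local conical structure of $X$ at points of $F_{0}$ to build a collar-type retraction of a neighborhood $f^{-1}([0,\epsilon])$ onto $F_{0}$. From this retraction we will read off the map $g$.

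First, since $F_{0}$ is compact and $f$ is proper, the sets $f^{-1}([0,\epsilon])$ form a neighborhood basis of $F_{0}$ in $X$. We will show that $F_{0}$ is a union of strata of a stratification of $X$ (refining the given one if necessary), with $X\setminus F_{0}\cong F_{1}\times(0,1]$. Each $x\in F_{0}$ has a distinguished neighborhood $U_{x}\cong\mathbb{R}^{j}\times C(L_{x})$. In it the function $f$ should play the role of the cone coordinate, up to a stratum-preserving reparametrization: $U_{x}\cap F_{0}$ corresponds to $\mathbb{R}^{j}\times C(L_{x}^{0})$ for a closed substratified set $L_{x}^{0}\subset L_{x}$, and $U_{x}\setminus F_{0}$ is a product with $(0,\epsilon)$ in the cone direction.

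Second, we glue these local cone structures. The intended tool is a teardrop/mapping-cylinder neighborhood theorem for a closed union of strata in a topologically stratified space. Concretely, following Quinn's theory of homotopically stratified sets and manifold approximate fibrations, we want a neighborhood $N$ of $F_{0}$ and a proper retraction $\rho:N\to F_{0}$ such that $N\setminus F_{0}\cong \partial N\times(0,1]$ compatibly with $\rho$. We then identify $\partial N$ with a level set $f^{-1}(\epsilon)\cong F_{1}$, using the product structure $F_{1}\times(0,1]$ to isotope $\partial N$ onto a level of $f$. Setting $g:=\rho|_{f^{-1}(\epsilon)}$ and using the trivialization to identify $f^{-1}(\epsilon)$ with $F_{1}$, we get $f^{-1}([0,\epsilon])\cong \operatorname{Cyl}(g)$. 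Finally, $f^{-1}([\epsilon,1])\cong F_{1}\times[\epsilon,1]$ is attached to the cylinder along $F_{1}\times\{\epsilon\}$, which reproduces $\operatorname{Cyl}(g)$ after reparametrizing the interval. This gives $X\cong\operatorname{Cyl}(g)$.

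The main obstacle is the second step: producing a global mapping-cylinder neighborhood of $F_{0}$ whose collar direction is compatible with $f$. Local cone charts exist by hypothesis, but in the purely topological category they do not glue canonically; this is exactly why Thom--Mather control data are usually assumed. I would first try induction over the strata of $F_{0}$, from lowest to highest dimension. At each stage we use the local cone structure to extend the retraction $\rho$ and a radial function, in the spirit of Siebenmann's local-to-global results for locally cone-like stratified sets. After that, the product structure on $X\setminus F_{0}$ is used to straighten the radial function to $f$. If this fails, the fallback is to strengthen the hypothesis slightly, namely that $f$ is locally a cone coordinate near $F_{0}$, which the example above shows is in any case the essential content of requiring $X$ to be stratified.
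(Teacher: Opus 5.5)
Your proposal has a genuine gap, and it is in Step 1, before the gluing problem you single out as the main obstacle. You take for granted that $F_0$ is (after refinement) a union of strata of $X$ and that $f$ behaves as a cone coordinate in distinguished neighborhoods. As stated, however, the hypothesis only says that $X$ admits \emph{some} topological stratification. Nothing ties that stratification to $F_0$, to $f$, or to the trivialization, and the claim is false in general.

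For instance, let $A\subset\operatorname{int}D^3$ be a cellular arc that is wild at an endpoint $p$ (cf.\ Fox--Artin). Cellularity and the annulus theorem give $D^3\setminus A\cong S^2\times(0,1]$. The collar coordinate, extended by $0$ on $A$, is then a proper surjection $f\colon D^3\to[0,1]$ satisfying every hypothesis with $X=D^3$, $F_0=A$, $F_1=S^2$. Now take any topological stratification of $D^3$ in which $A$ is a union of strata. Then $\{p\}$ is a stratum, and its cone chart identifies $(\mathcal{N}_p,A\cap\mathcal{N}_p)$ near $p$ with $(C(L),C(\{q\}))$, where $C(L)$ and $L\times(0,1)$ are homeomorphic to open subsets of $\mathbb{R}^3$. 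This forces $L\cong S^2$, so $A$ would be locally flat at $p$, a contradiction.

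Consequently, nothing you establish provides the input that Quinn's or Hughes's mapping-cylinder (teardrop) neighborhood theorems require: a closed union of strata in a homotopically stratified set, together with their dimension hypotheses on the strata.

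Step 2 contains a second unjustified move. Even granting $N$ and $\rho$, the region between $\partial N$ and a level $f^{-1}(\epsilon)\subset\operatorname{int}N$ is only an h-cobordism-type region. Isotoping $\partial N$ onto a level of $f$ amounts to proving that this region is a product, which needs an s-cobordism or controlled argument that you do not have. Your fallback of assuming that $f$ is locally a cone coordinate changes the statement rather than proving it.

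The paper's argument is different and entirely elementary. It fixes the given trivialization $\varphi$, defines $g(x)=\lim_{t\to0^+}\varphi(x,t)$, derives continuity of $g$ from a closed-graph argument, and concludes because $\Phi\colon\operatorname{Cyl}(g)\to X$ is a continuous bijection from a compact space onto a Hausdorff space. No global neighborhood theorem is used.

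Its uniqueness-of-limit step, however, rests on the same unproved local claim as your Step 1: that points of $F_0$ have distinguished neighborhoods meeting $f^{-1}((0,1])$ in a product with $(0,\varepsilon)$ compatible with $\varphi$. For an arbitrary trivialization the limit need not even exist. On $X=S^1\times[0,1]$ with $\varphi(z,t)=(e^{i/t}z,t)$, each curve $t\mapsto\varphi(z,t)$ accumulates on all of $S^1\times\{0\}$. Your route has the merit of not depending on the chosen trivialization.

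Neither argument, though, supplies what is really needed: a trivialization for which $\varphi(\,\cdot\,,t)$ converges uniformly as $t\to0^+$. This is equivalent to the level-preserving form of the conclusion. Given such a trivialization, $g$ is continuous as a uniform limit, $\Phi$ is continuous along $F_0$, and the compact-to-Hausdorff argument finishes at once. Supplying that uniform convergence from the hypotheses is the missing idea in both proofs.
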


    \begin{proof}
    	\newcommand{\cyl}{\operatorname{Cyl}}
    	By hypothesis there exists a homeomorphism
    		$\varphi\colon F_1 \times (0,1] \xrightarrow{\cong} f^{-1}\bigl((0,1]\bigr)$
    		that respects the projection onto $(0,1]$, i.e.\
    		$f(\varphi(x,t)) = t$ for all $(x,t) \in F_1 \times (0,1]$.
    		Moreover, $F_0$ and $F_1$ are closed, hence compact; the properness of $f$ and
    		compactness of $[0,1]$ imply that $X = f^{-1}([0,1])$ is compact as well.
    		
    		\noindent
    		Fix $x \in F_1$ and consider the curve $\gamma_x(t) = \varphi(x,t)$ for
    		$t \in (0,1]$. Because $X$ is compact, the set $\{\gamma_x(t)\}$ has at least
    		one accumulation point as $t \to 0$. Since $f(\gamma_x(t)) = t \to 0$, every
    		such accumulation point belongs to $F_0 = f^{-1}(0)$. We claim the limit is
    		unique.
    		
    		\noindent
    		Suppose $a,b \in F_0$ are two distinct accumulation points of
    		$\gamma_x(t)$. Because $X$ is a topologically stratified space and $F_0$ is a closed topologically
    		stratified subspace, every point of $F_0$ admits a distinguished neighborhood. Hence we can choose disjoint open
    		neighborhoods $U_a, U_b \subset X$ of $a,b$ such that $U_a \cap f^{-1}((0,1]) \cong (U_a \cap F_1) \times (0,\varepsilon)$, $U_b \cap f^{-1}((0,1]) \cong (U_b \cap F_1) \times (0,\varepsilon)$ for a small $\varepsilon > 0$. Set $V_a:=U_a \cap F_1$ and $V_b := U_b \cap F_1$.
    		Since $a$ is an accumulation point, $\gamma_x(t)$ lies in
    		$V_a \times (0,\varepsilon)$ for arbitrarily small $t$, which forces
    		$x \in V_a$. The same argument with $b$ gives $x \in V_b$, so
    		$V_a \cap V_b \neq \varnothing$. But then for all sufficiently small $t$,
    		$\gamma_x(t)$ belongs to both $U_a$ and $U_b$, contradicting their
    		disjointness. Hence the limit
    		\begin{align*}
    		g(x) := \lim_{t \to 0^{+}} \varphi(x,t)
    	    \end{align*}
    		is well-defined and takes values in $F_0$.

    		\noindent
    		We still need to show that the map $g:F_{1} \longrightarrow F_{0}$ is continuous. Now consider the graph of $\varphi$ inside the compact Hausdorff space $F_{1}\times X$:
    		\begin{align*}
    		\Gamma_{\varphi}= \{ (x,\varphi(x,t)) \; \vert \; x\in F_{1},\; t\in(0,1] \}.	
    		\end{align*}
    		Let $\overline{\Gamma}_{\varphi}$ be its closure. Take any $(x,y)\in\overline{\Gamma}_{\varphi}\cap (F_{1}\times F_{0})$.  
    		There exists a net $(x_{\alpha},t_{\alpha})\in F_{1}\times(0,1]$ such that $(x_{\alpha},\varphi(x_{\alpha},t_{\alpha}))\to (x,y)$.  
    		Applying the continuous map $f$ to the second coordinate gives $t_{\alpha}=f(\varphi(x_{\alpha},t_{\alpha}))\to f(y)=0$, hence $t_{\alpha}\to0^{+}$.  
    		By the definition of $g$ and the uniqueness of the limit, the image point $y$ must be exactly $g(x)$.  
    		Thus $\overline{\Gamma}_{\varphi}\cap (F_{1}\times F_{0}) = \{\, (x,g(x)) : x\in F_{1} \,\}= \Gamma_{g}$, the graph of $g$.

    		\noindent
    		Since $\overline{\Gamma}_{\varphi}$ is closed in $F_{1}\times X$ and $F_{1}\times F_{0}$ is closed, the graph $\Gamma_{g}$ is closed in $F_{1}\times F_{0}$.  
    		As $F_{1}$ and $F_{0}$ are compact Hausdorff, the closed graph theorem implies that $g$ is continuous

    		\noindent
    		Form the mapping cylinder of $g$, $\cyl(g) = \bigl(F_1 \times [0,1] \;\sqcup\; F_0\bigr)\big/\!\sim$, $(x,0) \sim g(x)$. Define a map $\Phi\colon \cyl(g) \to X$ by $\Phi(x,t) = \varphi(x,t) \;\; (t>0)$, $\Phi(y) = y \;\; (y \in F_0)$. By the definition of $g$, $\Phi$ is continuous at points with $t=0$ and is clearly continuous on elsewhere. Because $F_0$ and $F_1$ are compact,
    		$\cyl(g)$ is compact; $X$ is Hausdorff (as a stratified space). A continuous bijection from a compact space to a Hausdorff space is a
    		homeomorphism. Thus $\Phi$ is a homeomorphism.
            \end{proof}

            The above lemma suggests that, if we locally control the topology of the preimages of the closures of the strata of the refined stratification on $Y$ obtained in Proposition \ref{RefinedStra}, then it may be possible to show that the pullback of the refined stratification on $Y$ under the almost covering $f$ yields a refinement of the trivial stratification on the topological manifold $X$. To be more precise, by using the same notation as in Proposition \ref{RefinedStra}, we require that each point in $W_{\eta} \cap V^{\beta}_{\gamma} \cap S_{\alpha}$ has a small open neighborhood $U$ in $Y$ such that the space $\overline{f^{-1}(U)}$ is a stratified space. 
    
           \begin{definition}\label{SuiAlCov}
        	In the situation of Proposition \ref{RefinedStra}, we call the almost covering $f:X \longrightarrow Y$ \textbf{suitable} if, for all $\eta$, $\beta$, and $\gamma$, each point in $(W_{\eta} \cap V^{\beta}_{\gamma} \cap S_{\alpha})$ has a small open neighborhood $U$ such that the space $\overline{f^{-1}(U)}$ is topologically stratified.
            \end{definition}
    
        \begin{remark}
        	Unless otherwise specified, the term almost covering will refer to a suitable almost covering throughout the remainder of this work.
    \end{remark}

    One can show that if the almost covering $f:X \rightarrow Y$ is suitable, in the sense of above definition, there is a refined stratification on $X$, with respect to which the map $f$ is stratified. However, the existence of such a stratification is not needed for this work. In fact, Lemma~\ref{RefinedStra} shows more than what is needed for this work. Let the filtration $Y \supset Y_{n_{R}}(=R) \supset Y_{n_{R}-1} \supset \cdots$ be the refined stratification on $Y$ obtained by Proposition~\ref{RefinedStra}, where $n_{R}:=\dim(R)$, and set $S_{k}:=Y_{k} \setminus Y_{k-1}$. For our purposes, it suffices to use Lemma~\ref{FibersMap} inductively and argue that if $r \in S_{k}$ for each fiber $f^{-1}(r) \subset f^{-1}(S_{k} )$ there exists an open neighborhood in $U$ which deformation retracts to $f^{-1}(r)$.

      \section{A Decomposition theorem for almost covering maps with non-singular domains}\label{DecomSec}

      With the required topological preliminaries in place, we are now ready to prove the main results of this work. This section is structured as follows. We start by proving a decomposition theorem for a simple class of almost coverings. To be more precise, we study almost coverings such that the restriction to the fibered set is a fiber bundle. We then show that an analogous decomposition theorem holds for a more general class of almost covering maps. However, before we start this section we need the following elementary results.
      
      \begin{proposition}[Associated local system to a covering map]\label{PropDecomp1}
      	Let $f:X \longrightarrow Y$ be a covering map of Hausdorff, path-connected, locally contractible 
      	and locally compact topological spaces of degree $n$. Then in $\operatorname{Sh}(Y)$, we have 
      	\begin{align}\label{Decomp1}
      		f_{\ast} \underline{\mathbb{Q}}_{X} \cong \underline{\mathbb{Q}}_Y \oplus \mathcal{L},
      	\end{align}
      	where $\mathcal{L}$ is a local system of rank $n-1$.
      \end{proposition}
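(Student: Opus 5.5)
The natural route is to construct a splitting of $f_{\ast}\underline{\mathbb{Q}}_X$ using the unit and the trace map of the finite covering, and then to identify the complement as a local system of rank $n-1$.

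First, $f_{\ast}\underline{\mathbb{Q}}_X$ is a local system of rank $n$. Since $f$ is a covering map of degree $n$, every point $y\in Y$ has an evenly covered neighborhood $U$ with $f^{-1}(U)=\bigsqcup_{i=1}^{n}U_i$ and $f|_{U_i}:U_i\to U$ a homeomorphism. Taking $U$ connected, which is possible because $Y$ is locally contractible and hence locally connected, we get $\Gamma(U,f_{\ast}\underline{\mathbb{Q}}_X)=\Gamma(f^{-1}(U),\underline{\mathbb{Q}}_X)=\bigoplus_{i}\Gamma(U_i,\underline{\mathbb{Q}})\cong\mathbb{Q}^n$. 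The same holds for every connected open $V\subset U$, with restriction maps being identities on $\mathbb{Q}^n$. Hence $f_{\ast}\underline{\mathbb{Q}}_X|_U\cong\underline{\mathbb{Q}}_U^{\oplus n}$, so $f_{\ast}\underline{\mathbb{Q}}_X$ is locally constant of rank $n$.

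Second, I would write down two morphisms of sheaves on $Y$. One is the adjunction unit $u:\underline{\mathbb{Q}}_Y\to f_{\ast}f^{\ast}\underline{\mathbb{Q}}_Y=f_{\ast}\underline{\mathbb{Q}}_X$, which sends a locally constant function $c$ to $c\circ f$. The other is the trace $\operatorname{tr}:f_{\ast}\underline{\mathbb{Q}}_X\to\underline{\mathbb{Q}}_Y$. On an evenly covered $V$ it sends a section $s$ of $\underline{\mathbb{Q}}_X$ over $f^{-1}(V)$ to the function $y\mapsto\sum_{x\in f^{-1}(y)}s(x)$. This is well defined because the fibers are finite. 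It is locally constant because on each sheet $U_i$ the function $s$ is locally constant and $f|_{U_i}$ is a homeomorphism. It is compatible with restrictions, so the local definitions glue to a sheaf morphism, independently of any choice of sheets. Then $\operatorname{tr}\circ u$ is multiplication by $n$, which is invertible over $\mathbb{Q}$. Thus $e:=\tfrac{1}{n}\,u\circ\operatorname{tr}$ is an idempotent endomorphism of $f_{\ast}\underline{\mathbb{Q}}_X$.

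Since $\operatorname{Sh}(Y)$ is abelian, the idempotent $e$ splits, giving
\[
f_{\ast}\underline{\mathbb{Q}}_X\cong\operatorname{im}(e)\oplus\ker(e),\qquad \operatorname{im}(e)\cong\underline{\mathbb{Q}}_Y,
\]
where the second isomorphism holds because $u$ is a split monomorphism with left inverse $\tfrac{1}{n}\operatorname{tr}$. Set $\mathcal{L}:=\ker(\operatorname{tr})=\ker(e)$. To see that $\mathcal{L}$ is a local system of rank $n-1$, work on an evenly covered connected $U$. There $\operatorname{tr}$ becomes the summation map $\mathbb{Q}^n\to\mathbb{Q}$, which is constant across all connected open $V\subset U$. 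Its kernel is therefore the constant sheaf on the $(n-1)$-dimensional subspace $\{\sum a_i=0\}$, so $\mathcal{L}|_U\cong\underline{\mathbb{Q}}_U^{\,n-1}$.

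The main obstacle is that $\operatorname{tr}$ must be a genuine sheaf morphism. This needs care about connectedness of small neighborhoods so that sections over evenly covered opens are really tuples of constants, and here local contractibility, or just local connectedness, is used. The local compactness and Hausdorff hypotheses are not needed for this statement beyond ensuring the spaces are reasonable; I expect them to matter only later, when $Rf_{\ast}$ is compared with $f_{\ast}$. Path-connectedness is also not needed for the splitting itself, only for rank constancy, and the degree being $n$ already gives that.
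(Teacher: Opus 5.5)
Your proof is correct and follows the same route as the paper. The paper also pairs the adjunction unit $\underline{\mathbb{Q}}_Y\to f_{\ast}\underline{\mathbb{Q}}_X$ with the summation (trace) map, which it calls a ``counit''. It observes that the composite is $n\cdot\operatorname{id}$ and splits the resulting short exact sequence with $\mathcal{L}=\ker(\operatorname{tr})$. Your idempotent formulation is equivalent, and your local check that $\ker(\operatorname{tr})$ is locally $\underline{\mathbb{Q}}^{\,n-1}$ supplies a detail the paper leaves implicit.
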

      \begin{proof}
      	Consider the unit morphism under the $(f^{\ast},f_{\ast})$ adjunction $\underline{\mathbb{Q}}_{Y}\longrightarrow f_{\ast} f^{\ast} \underline{\mathbb{Q}}_{Y}$. For a surjective finite map, the unit morphism is injective. Recall that $f^{\ast} \underline{\mathbb{Q}}_{Y} \cong \underline{\mathbb{Q}}_{X}$ in $\operatorname{Sh}(X)$. Hence, we get an injective morphism of sheaves, $\eta : \underline{\mathbb{Q}}_{Y} \longrightarrow f_{\ast}\underline{\mathbb{Q}}_{X} $. Note that since the covering map is unramified, the sheaf $f_{\ast} \underline{\mathbb{Q}}_{X}$ is a local system of rank $n$. Let $\{U_{i}\}_{i \in I}$ be a good cover of $Y$. Then, the morphism $\eta_{U}: \Gamma(U;\underline{\mathbb{Q}}_{Y}) \longrightarrow \Gamma(U; f_{\ast} \underline{\mathbb{Q}}_{X})$ is given by $\eta_{U}(q)=(q, \dots ,q)$ for $U \in \{U_{i}\}_{i \in I}$. We also consider the counit morphism $\varepsilon:f_{\ast}f^{\ast} \underline{\mathbb{Q}}_{Y}(\cong f_{\ast} \underline{\mathbb{Q}}_{X}) \longrightarrow \underline{\mathbb{Q}}_{Y}$. The section morphism $\varepsilon_{U}:\Gamma(U;f_{\ast}\underline{\mathbb{Q}}_{X}) \longrightarrow \Gamma(U; \underline{\mathbb{Q}}_{Y})$ is given by $\varepsilon_{U}(q_{1}, \dots, q_{n})= \sum_{i=1}^{n}q_{i}$. It follows immediately that $\varepsilon_{U} \circ \eta_{U} = n \cdot \operatorname{id}_{\Gamma(U;\underline{\mathbb{Q}}_{Y})}$. As a result, the short exact sequence of sheaves
      	\begin{align*}
      		0 \rightarrow \operatorname{ker}(\varepsilon) \rightarrow f_{\ast} \underline{\mathbb{Q}}_{X} \xrightarrow{\varepsilon} \underline{\mathbb{Q}}_{Y} \rightarrow 0,
      	\end{align*}  
      	splits and the claim follows.
      \end{proof}
        In the following, we refer the local systems $\underline{\mathbb{Q}}_Y \oplus \mathcal{L}$ appearing in Isomorphism \ref{Decomp1} as the associated local system to the covering $f$.
        
        The existence of the canonical truncation morphism, which we establish in the following elementary lemma, is a crucial part of the upcoming discussion. 
        
        \begin{lemma}[Canonical truncation morphism]\label{CanonMorph}
        	Let $f : X \longrightarrow Y$ be a continuous map of topological spaces and let
        	$Rf_* : D(X) \to D(Y)$ be the right derived direct image
        	functor between the unbounded derived categories of sheaves of abelian groups.
        	For every integer $k$ there exists a unique natural transformation
        	\begin{align*}
        \gamma_\mathcal{F}:	\tau_{\leq k} \circ Rf_*  \longrightarrow  Rf_* \circ \tau_{\leq k}	
        	\end{align*}
        	of functors $D(X) \longrightarrow D(Y)$, which makes the following diagram commutative
        	 	\begin{align*}
        		\begin{array}{ccccc}
        			\tau_{\leq k}Rf_*\mathcal{F} & \longrightarrow & Rf_*\mathcal{F} & \longrightarrow &
        			\tau_{\geq k+1}Rf_*\mathcal{F} \\
        			\downarrow{\gamma_\mathcal{F}} & & \parallel & & \\
        			Rf_*\tau_{\leq k}\mathcal{F} & \longrightarrow & Rf_*\mathcal{F} & \longrightarrow &
        			Rf_*\tau_{\geq k+1}\mathcal{F}
        		\end{array}	
        	\end{align*}
        \end{lemma}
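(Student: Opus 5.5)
The natural route is to build $\gamma$ from the universal property of the truncation triangle. First I will check that $Rf_*$ is left t-exact for the standard t-structures. Since $Rf_*$ is right adjoint to the exact functor $f^{-1}$, and $f^{-1}$ preserves $D^{\geq k+1}$, adjunction gives the following: for every $A\in D^{\leq k}(Y)$ and $B\in D^{\geq k+1}(X)$ we have $\operatorname{Hom}(A,Rf_*B)\cong \operatorname{Hom}(f^{-1}A,B)$, and this vanishes because $f^{-1}A\in D^{\leq k}(X)$. Equivalently, $Rf_*$ sends $D^{\geq k+1}(X)$ into $D^{\geq k+1}(Y)$; one can also see this from K-injective resolutions, but the adjunction argument avoids boundedness issues in the unbounded category.

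Next I construct $\gamma_{\mathcal F}$. Apply $Rf_*$ to the distinguished triangle $\tau_{\leq k}\mathcal F\to\mathcal F\to\tau_{\geq k+1}\mathcal F\xrightarrow{+1}$ to get a distinguished triangle in $D(Y)$. Consider the composite $\tau_{\leq k}Rf_*\mathcal F\to Rf_*\mathcal F\to Rf_*\tau_{\geq k+1}\mathcal F$. It is a morphism from an object of $D^{\leq k}(Y)$ to an object of $D^{\geq k+1}(Y)$, by the first step, so it is zero. By the long exact $\operatorname{Hom}(\tau_{\leq k}Rf_*\mathcal F,-)$ sequence of the image triangle, the map $\tau_{\leq k}Rf_*\mathcal F\to Rf_*\mathcal F$ factors through $Rf_*\tau_{\leq k}\mathcal F\to Rf_*\mathcal F$. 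This gives $\gamma_{\mathcal F}$ making the left square commute, and the identity on $Rf_*\mathcal F$ fills the middle.

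The main obstacle is uniqueness, and with it naturality, since factorizations through a triangle are generally not unique. The ambiguity in the lift is given by the image of $\operatorname{Hom}(\tau_{\leq k}Rf_*\mathcal F,\,Rf_*\tau_{\geq k+1}\mathcal F[-1])$. Here the target lies in $D^{\geq k+2}(Y)$, again by left t-exactness, so this group vanishes. Hence $\gamma_{\mathcal F}$ is unique.

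Naturality then follows formally. For $u:\mathcal F\to\mathcal G$, the two morphisms $Rf_*(\tau_{\leq k}u)\circ\gamma_{\mathcal F}$ and $\gamma_{\mathcal G}\circ\tau_{\leq k}Rf_*(u)$ both lift the same map $\tau_{\leq k}Rf_*\mathcal F\to Rf_*\mathcal G$ through $Rf_*\tau_{\leq k}\mathcal G\to Rf_*\mathcal G$. The same vanishing argument, now with $\operatorname{Hom}(\tau_{\leq k}Rf_*\mathcal F,\,Rf_*\tau_{\geq k+1}\mathcal G[-1])=0$, shows the two coincide. For the same reason any natural transformation making the diagram commute must equal this $\gamma$.

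Alternatively, $\gamma$ can be described as the adjoint of $f^{-1}\tau_{\leq k}Rf_*\mathcal F\cong\tau_{\leq k}f^{-1}Rf_*\mathcal F\to\tau_{\leq k}\mathcal F$, which is induced by the counit and uses exactness of $f^{-1}$. I would record this as a remark, since it makes naturality immediate.
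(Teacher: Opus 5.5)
Your proposal is correct and follows the same route as the paper. Both arguments use left t-exactness of $Rf_*$, factor $\tau_{\leq k}Rf_*\mathcal{F}\to Rf_*\mathcal{F}$ through the image of the truncation triangle by orthogonality, and get uniqueness from the vanishing of $\operatorname{Hom}(\tau_{\leq k}Rf_*\mathcal{F}, Rf_*\tau_{\geq k+1}\mathcal{F}[-1])$. Your adjunction argument for left t-exactness in the unbounded setting and your explicit derivation of naturality from uniqueness of lifts are more careful than the paper's one-line appeals at those points.
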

        
        \begin{proof}
        	The standard $t$-structure on $D(X)$ (resp.\ $D(Y)$) has truncation
        	functors $\tau_{\leq k}$ and $\tau_{\geq k}$. The functor $f_*$ on abelian sheaves is
        	left exact, so its right derived functor $Rf_*$ satisfies
        	$Rf_*\bigl(\mathrm{D}^{\geq k+1}(X)\bigr) \subseteq \mathrm{D}^{\geq k+1}(Y)$.

        	\noindent
        	For any $\mathcal{F} \in \mathrm{D}(X)$ we have the canonical distinguished triangle
        	$\tau_{\leq k}\mathcal{F} \longrightarrow \mathcal{F} \longrightarrow \tau_{\geq k+1}\mathcal{F} \longrightarrow$
        	in $D(X)$. Applying $Rf_*$ yields a distinguished triangle in $D(Y)$
        	\begin{align*}
             Rf_*\tau_{\leq k}\mathcal{F} \longrightarrow Rf_*\mathcal{F} \longrightarrow
        		Rf_*\tau_{\geq k+1}\mathcal{F} \longrightarrow        		
        		\end{align*}
        	with $Rf_*\tau_{\geq k+1}\mathcal{F} \in D^{\geq k+1}(Y)$.

        	\noindent
        	The object $Rf_*\mathcal{F}$ also admits the truncation triangle
        	\begin{align*}
        		\tau_{\leq k}Rf_*\mathcal{F} \longrightarrow Rf_*\mathcal{F} \longrightarrow
        		\tau_{\geq k+1} Rf_*\mathcal{F} \longrightarrow .	
            \end{align*}
        	Composing the canonical morphisms $\tau_{\leq k}Rf_*\mathcal{F} \longrightarrow Rf_*\mathcal{F}$
        	and $Rf_*\mathcal{F} \longrightarrow Rf_*\tau_{\geq k+1}\mathcal{F}$ results in the canonical morphism
        	$\tau_{\leq k}Rf_*\mathcal{F} \longrightarrow Rf_*\mathcal{F} \longrightarrow
        	Rf_*\tau_{\geq k+1}\mathcal{F}$, which is a morphism from an object of $D^{\leq k}(Y)$ to an object of
        	$D^{\geq k+1}(Y)$.  Since $\operatorname{Hom}(\mathrm{D}^{\leq k}, D^{\geq k+1}) = 0$,
        	this composition vanishes.
        	
        	Hence the map $\tau_{\leq k}Rf_*\mathcal{F} \longrightarrow Rf_*\mathcal{F}$ factors through the weak kernel of
        	$Rf_*\mathcal{F} \longrightarrow Rf_*\tau_{\geq k+1}\mathcal{F}$, which is exactly
        	$Rf_*\tau_{\leq k}\mathcal{F}$.  Consequently we obtain a lift
        	\begin{align*}
        	\gamma_\mathcal{F} : \tau_{\leq k} Rf_*\mathcal{F} \longrightarrow Rf_*\tau_{\leq k}\mathcal{F}    		
        	\end{align*}
        	such that the diagram
        	\begin{align*}
        	\begin{array}{ccccc}
        		\tau_{\leq k}Rf_*\mathcal{F} & \longrightarrow & Rf_*\mathcal{F} & \longrightarrow &
        		\tau_{\geq k+1}Rf_*\mathcal{F} \\
        		\downarrow{\gamma_\mathcal{F}} & & \parallel & & \\
        		Rf_*\tau_{\leq k}\mathcal{F} & \longrightarrow & Rf_*\mathcal{F} & \longrightarrow &
        		Rf_*\tau_{\geq k+1}\mathcal{F}
        	\end{array}	
        	\end{align*}
        	commutes.  Any two such lifts differ by an element of
        	$\operatorname{Hom}\bigl(\tau_{\leq k} Rf_*\mathcal{F}, (Rf_*\tau_{\geq k+1}\mathcal{F})[-1]\bigr)$.
        	The shifted object $(Rf_*\tau_{\geq k+1}\mathcal{F})[-1]$ lies in $\mathrm{D}^{\geq k+2}(Y)$ while the
        	source stays in $\mathrm{D}^{\leq k}(Y)$, so this Hom‑group also vanishes. Thus $\gamma_{\mathcal{F}}$
        	is unique.
        	
        	All ingredients (truncation triangles, $Rf_*$, the lifting argument) are natural in
        	$\mathcal{F}$, so the morphisms $\gamma_\mathcal{F}$ assemble into a natural transformation
        	$\tau_{\leq k}Rf_* \Rightarrow Rf_*\tau_{\leq k}$, which is canonical
        	by its uniqueness.
        \end{proof}

      \subsection{Almost Covering  Maps with Fibered Set Containing one Fiber Bundle} \label{DecompSub1}

       In the following, let $f:X \longrightarrow Y$ be a suitable almost covering map in the sense of Definition \ref{SuiAlCov}. We assume that, the restriction $f \vert_{f^{-1}(R)}$ is a fiber bundle with fiber $F$, where $R$ is the fibered set of the map $f$.

      To begin with, we assume that the spaces $R$ and $F$, and hence consequently $B:=f^{-1}(R)$, are closed topological manifolds. As a result, it follows that the filtrations $X \supset B$ and $Y \supset R$ are topological stratifications. Let $c_{B}:=\operatorname{codim}_{X}(B) \geq 2$ and $c_{R}:=\operatorname{codim}_{Y}(R)$ and consider the following commutative diagram
      \begin{align}\label{DiaOneFib}
		\begin{tikzcd}[ampersand replacement=\&]
			B \arrow[r,  hookrightarrow,"k"] \arrow[d," f \vert_{B}"] \& X \arrow[d,"f"] \arrow[r, hookleftarrow,"i"] \& X \setminus B   \arrow[d,"f \vert_{X \setminus B}"] \\
			R \arrow[r, hookrightarrow,"h"]  \& Y \arrow[r, hookleftarrow,"j"] \& Y \setminus R
		\end{tikzcd}.
       \end{align}
       Since $f \vert_{X \setminus B}$ is a covering map, from Proposition \ref{PropDecomp1} follows that $R(f \vert_{X \setminus B})\underline{\mathbb{Q}}_{X \setminus B} \simeq \underline{\mathbb{Q}}_{Y \setminus R} \oplus \mathcal{L}$, where $\mathcal{L}$ is a local system on $Y \setminus R$.
       In the following, we aim to provide a sufficient topological condition under which the shifted derived direct image complex $Rf_{\ast}\underline{\mathbb{Q}}_{X}[n]$ is quasi-isomorphic to the direct sum $IC_{Y}(\underline{\mathbb{Q}}_{Y \setminus R} \oplus \mathcal{L}) \bigoplus \tau_{\leq \overline{n}(c_{R})-n}Rh_{\ast}(R(f\vert_{B})_{\ast} \underline{\mathbb{Q}}_{B}[n-c_{B}])$. Consider the following distinguished triangle in the derived category $D_{b}^{c}(X)$
       \begin{align*}
	   Rk_{\ast}k^{!} \underline{\mathbb{Q}}_{X} \longrightarrow \underline{\mathbb{Q}}_{X} \longrightarrow Ri_{\ast}i^{\ast}\underline{\mathbb{Q}}_{X} \xrightarrow{[1]}.
       \end{align*}
       As mentioned, $B$ does not posses a tubular neighborhood in $X$, in general. We assume the existence of the associated link bundle to $B$ in $X$ with fibers homeomorphic to a sphere. Note that in a purely topological setting, the homeomorphic type of the link is not well defined. See for example \cite{cannon1979} by Cannon, which shows that the double suspension of every homology sphere is a topological sphere. Furthermore, we assume that the associated link bundle, equivalently the normal bundle of $B$ in $X$, is orientable. This implies that $k^{!}\underline{\mathbb{Q}}_{X} \simeq \underline{\mathbb{Q}}_{B}[-c_{B}]$. Using $i^{\ast}\underline{\mathbb{Q}}_{X} \simeq \underline{\mathbb{Q}}_{X \setminus B}$, we simplify the previous distinguished triangle to 
       
       \begin{align}\label{DisTri1}
	   Rk_{\ast} \underline{\mathbb{Q}}_{B}[-c_{B}] \longrightarrow \underline{\mathbb{Q}}_{X} \longrightarrow Ri_{\ast}\underline{\mathbb{Q}}_{X \setminus B} \xrightarrow{[1]}.
       \end{align}
       Note that if $B \neq \emptyset $ the above distinguished triangle does not split, since a splitting would imply that $\underline{\mathbb{Q}}_{X}$ is quasi-isomorphic to the direct sum $Ri_{\ast}\underline{\mathbb{Q}}_{X \setminus B} \oplus Rk_{\ast}\underline{\mathbb{Q}}_{B}[-c_{B}]$, which leads to a contradiction by taking the sheaf cohomology of the complexes. 
       
       The following elementary lemma provides an alternative way to show that the previous distinguished triangle does not split.

       \begin{lemma}[Top-degree connecting homomorphism]\label{TopConnHom}
       	Let $\pi : E \longrightarrow B$ be an oriented spherical bundle with fibre $S^k$, $k \ge 0$.
       	Suppose that $B$ has finite cohomological dimension, i.e., there exists an integer $N$ such that $H^{i}(B) = 0$ for all $i > N$, and $H^{N}(B) \neq 0$. Then the connecting homomorphism of the associated Gysin sequence $\delta \colon H^{N+k}(E) \longrightarrow H^{N}(B)$
       	is surjective. In particular, it is non‑zero.
       \end{lemma}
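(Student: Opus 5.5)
The plan is to read off surjectivity from exactness of the Gysin sequence at its top end. With rational (or any field) coefficients, the Gysin sequence of the oriented $S^k$-bundle $\pi : E \to B$ is
\begin{align*}
\cdots \longrightarrow H^{N+k}(E) \xrightarrow{\ \delta\ } H^{N}(B) \xrightarrow{\ \cup e\ } H^{N+k+1}(B) \longrightarrow H^{N+k+1}(E) \longrightarrow \cdots,
\end{align*}
where $e \in H^{k+1}(B)$ is the Euler class and the map after $\delta$ is cup product with $e$. I will derive this sequence in the standard way from the long exact sequence of the pair $(D(E), E)$, where $D(E)$ is the associated disc bundle (the mapping cylinder of $\pi$). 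The inputs are the Thom isomorphism $H^{i}(D(E),E) \cong H^{i-k-1}(B)$, which uses the orientation assumption, and the homotopy equivalence $D(E) \simeq B$. In the purely topological setting of the paper, I would instead obtain the sequence from the Leray spectral sequence of $\pi$. This spectral sequence has only two nonzero rows, $q=0$ and $q=k$, because orientability makes $R^{q}\pi_{\ast}\underline{\mathbb{Q}}_{E}$ constant with stalk $\mathbb{Q}$ in those degrees. The two-row spectral sequence degenerates into the long exact sequence above, with $\delta$ the edge map $H^{N+k}(E) \to E_{\infty}^{N,k} \subseteq E_{2}^{N,k} = H^{N}(B)$ and $\cup e$ the differential $d_{k+1}$.

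The key step is the vanishing of the next term. Since $N+k+1 > N$, the hypothesis gives $H^{N+k+1}(B) = 0$, so the map $\cup e : H^{N}(B) \to H^{N+k+1}(B)$ is zero. Exactness at $H^{N}(B)$ then gives $\operatorname{im}(\delta) = \ker(\cup e) = H^{N}(B)$, so $\delta$ is surjective. Because $H^{N}(B) \neq 0$ by assumption, $\delta$ is nonzero. In spectral-sequence language, $E_{2}^{N,k}$ receives no nonzero differential and supports only the differential $d_{k+1}$ into $E_{2}^{N+k+1,0} = 0$. Hence $E_{\infty}^{N,k} = E_{2}^{N,k}$, and the edge map onto it is surjective.

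The case $k=0$ needs separate care. There $S^0$ is two points, and "oriented" should be read as meaning that $E \to B$ is a trivial double cover. The Euler class then vanishes over $\mathbb{Q}$, and the same exactness argument goes through, with the rows $q=0$ and $q=k$ combined.

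I expect the main obstacle to be justifying the Gysin sequence in this generality, where $B$ is only assumed to have finite cohomological dimension and $\pi$ is a topological sphere bundle. Concretely, one needs orientability to make the fiber-cohomology sheaf $R^{k}\pi_{\ast}\underline{\mathbb{Q}}_{E}$ trivial, so that the Leray spectral sequence (or the Thom isomorphism) applies. I would cite the standard sheaf-theoretic Gysin sequence (e.g., Bredon's Sheaf Theory) and not reprove it. Once the sequence is available, the surjectivity argument above is a single exactness step.
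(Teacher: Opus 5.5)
Your proposal is correct and is essentially the paper's argument. The paper also writes down the Gysin sequence $H^{N+k}(E)\xrightarrow{\delta}H^{N}(B)\xrightarrow{e\cup}H^{N+k+1}(B)=0$ and reads off the surjectivity of $\delta$ from exactness at $H^{N}(B)$. Your added justification of the Gysin sequence via the two-row Leray spectral sequence, and your remark on the case $k=0$, go beyond what the paper provides, since the paper simply takes the Gysin sequence as given.
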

       
       \begin{proof}
       	Consider the Gysin sequence of the spherical bundle $S^k \to E \xrightarrow{\pi} B$ with coefficients in $R$:
       	\[
       	\cdots \longrightarrow H^{i}(B) \xrightarrow{e\cup} H^{i+k+1}(B) \xrightarrow{\pi^*} H^{i+k+1}(E) \xrightarrow{\delta} H^{i+1}(B) \longrightarrow \cdots,
       	\]
       	where $e \in H^{k+1}(B)$ is the Euler class. Set $i = N-1$. The sequence therefore reduces to $ H^{N+k}(E) \xrightarrow{\delta} H^{N}(B) \xrightarrow{e\cup} 0$. It follows that the connecting homomorphism $\delta$ is surjective, and in particular a non‑zero map, since $H^{N}(B) \neq 0$.
       \end{proof}
       
       Consider Distinguished triangle~\ref{DisTri1}. Let $x \in B$ be a point,  $U$ a small open neighborhood of $x$ in $X$, and $n_{B}:=\dim_{X}(B)$. Consider the trivial disc bundle $\pi: \overline{U} \longrightarrow \overline{U \cap B}$, which is of rank $c_{B}:=n-n_{B}$. The above lemma shows that the connecting homomorphism $ H^{n-1}(\overline{U} \setminus \overline{U \cap B}) \xrightarrow{ \delta} H^{n_{B}}(\overline{U \cap B})$ is non-trivial and hence the morphism $Ri_{\ast}\underline{\mathbb{Q}}_{X \setminus B} \xrightarrow{[1]} Rk_{\ast}\underline{\mathbb{Q}}_{B}[c_{B}-1]$ is non-trivial. Hence, the distinguished triangle does not split.
       
       As a corollary of Lemma \ref{TopConnHom}, it follows that even after applying the functor $Rf_{\ast}$ to Distinguished triangle \ref{DisTri1}, the resulting distinguished triangle does not split.

       \begin{lemma}\label{SplittingLemma1}
	   Let $f:X \longrightarrow Y$ be an almost covering map from a closed topological $n$-manifold $X$ to a closed topological $n$-pseudomanifold $Y$. Let $R$ be the fibered set of $f$ and suppose the restriction $f|_{f^{-1}(R)}:f^{-1}(R)\longrightarrow R$ is a fiber bundle with fiber $F$. Set $B:=f^{-1}(R)$. Assume $R$ and $F$ are closed topological manifolds, the normal bundle of $B$ in $X$ exists and is orientable, and the link of every $x\in B$ in $X$ is homeomorphic to a sphere. Let $k:B\hookrightarrow X$ and $i:X\setminus B\hookrightarrow X$ be the inclusion maps. Then the distinguished triangle
	   \begin{align}\label{DisTri2}
	   	Rf_{\ast}Rk_{\ast}\underline{\mathbb{Q}}_{B}[-c_B]
	   	\longrightarrow
	   	Rf_{\ast}\underline{\mathbb{Q}}_{X}
	   	\longrightarrow
	   	Rf_{\ast}Ri_{\ast}\underline{\mathbb{Q}}_{X\setminus B}
	   	\xrightarrow{[1]}
	   \end{align}
	   does not split.
        \end{lemma}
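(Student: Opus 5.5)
We argue by contradiction: assume the triangle \eqref{DisTri2} splits, so that the connecting morphism
\[
\partial\colon Rf_{\ast}Ri_{\ast}\underline{\mathbb{Q}}_{X\setminus B}\longrightarrow Rf_{\ast}Rk_{\ast}\underline{\mathbb{Q}}_{B}[1-c_B]
\]
is zero in $D^{b}_{c}(Y)$. Since $Rf_{\ast}$ is a triangulated functor, $\partial = Rf_{\ast}(\partial_X)$, where $\partial_X\colon Ri_{\ast}\underline{\mathbb{Q}}_{X\setminus B}\to Rk_{\ast}\underline{\mathbb{Q}}_{B}[1-c_B]$ is the connecting morphism of \eqref{DisTri1}. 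Any morphism in $D^{b}_{c}(Y)$ that vanishes induces the zero map on hypercohomology over every open $V\subset Y$. So it suffices to exhibit one open set $V\subset Y$ and one degree in which $\mathbb{H}^{\ast}(V;\partial)$ is non-zero.

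\textbf{Step 1 (hypercohomology over $V$).} Using $R\Gamma(V;Rf_{\ast}-)=R\Gamma(f^{-1}(V);-)$, the map $\mathbb{H}^{\ast}(V;\partial)$ becomes the connecting map
\[
H^{\ast}\big(f^{-1}(V)\setminus B\big)\longrightarrow H^{\ast+1-c_B}\big(f^{-1}(V)\cap B\big).
\]
This is the connecting map in the long exact sequence of the pair $\big(f^{-1}(V), f^{-1}(V)\setminus B\big)$, combined with the Thom isomorphism $H^{\ast}_{B}(f^{-1}(V))\cong H^{\ast-c_B}(f^{-1}(V)\cap B)$. The Thom isomorphism is available because $k^{!}\underline{\mathbb{Q}}_X\simeq\underline{\mathbb{Q}}_B[-c_B]$, which uses the orientability of the normal bundle.

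\textbf{Step 2 (choice of $V$).} Take $V=Y$. Since $f$ is proper, $f^{-1}(Y)=X$, and the tubular neighborhood $N$ of $B$ (whose existence is assumed) lets us excise. Then $H^{\ast}(X\setminus B)\to H^{\ast}_B(X)$ factors through $H^{\ast}(N\setminus B)\to H^{\ast+1}_B(N)$, and under Thom this is exactly the Gysin connecting homomorphism $\delta$ of the sphere bundle $\partial N\simeq N\setminus B\to B$ with fibre $S^{c_B-1}$. Because $X\setminus B$ and $N\setminus B$ have different cohomology, one must check compatibility: the connecting map for the pair $(X,X\setminus B)$ is the composite of restriction $H^{\ast}(X\setminus B)\to H^{\ast}(N\setminus B)$ with $\delta$, by naturality of the long exact sequence of pairs under excision.

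\textbf{Step 3 (non-vanishing).} Since $B$ is a closed orientable manifold (base $R$ and fibre $F$ are closed orientable manifolds), $N=n_B$ is its top degree and $H^{n_B}(B)\neq 0$. Lemma~\ref{TopConnHom} shows that $\delta\colon H^{n-1}(\partial N)\to H^{n_B}(B)$ is surjective.

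The main obstacle is that we need the composite with restriction from $X\setminus B$ to be non-zero, and not merely $\delta$ itself. We handle this with the exact sequence
\[
H^{n-1}(X\setminus B)\longrightarrow H^{n}_B(X)\cong H^{n_B}(B)\longrightarrow H^{n}(X)\longrightarrow H^{n}(X\setminus B).
\]
Here $H^{n}(X\setminus B)=0$, because $X\setminus B$ is a non-compact (open) connected manifold. Hence $H^{n_B}(B)\to H^n(X)$ is surjective. The count goes as follows. If $X$ is orientable, $H^n(X)\cong\mathbb{Q}$, so the kernel of $H^{n_B}(B)\to H^n(X)$ has dimension $b_0(B)-1$ (using $\dim H^{n_B}(B)=b_0(B)$ for $B$ closed orientable), and this kernel is the image of $\partial$. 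If $X$ is non-orientable, $H^n(X)=0$ and the whole of $H^{n_B}(B)$ is in the image. In either case the image of $\partial$ is non-zero unless $X$ is orientable and $B$ is connected.

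To cover the remaining case ($X$ orientable, $B$ connected), we replace $V=Y$ by a small distinguished open $V\ni r$ over which $f^{-1}(V)\cong\mathbb{R}^{n_B}\times D^{c_B}$, and use compactly supported hypercohomology $R\Gamma_c$ in place of $R\Gamma$. This still detects the vanishing of a morphism, and $f$ proper gives $R\Gamma_c(V;Rf_{\ast}-)=R\Gamma_c(f^{-1}(V);-)$. This reduces the question to the local computation in the paragraph after Lemma~\ref{TopConnHom}, where $H^{n-1}_c\big(f^{-1}(V)\setminus B\big)\to H^{n_B}_c\big(f^{-1}(V)\cap B\big)\cong\mathbb{Q}$ is surjective, since $H^{n}_c(\mathbb{R}^{n})\to H^{n}_c$ of the complement of $B$ behaves as in the trivial disc bundle case. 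Thus $\partial\neq 0$ and the triangle does not split.
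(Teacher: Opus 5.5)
There is a genuine gap: the step you use to cover the main case fails, because compactly supported hypercohomology over small $V$ cannot detect the connecting morphism $\partial$ at all.

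Your opening reduction is sound: a split triangle forces $\partial=0$, hence zero on every $\mathbb{H}^{\ast}(V;-)$ and $\mathbb{H}^{\ast}_c(V;-)$. The computation at $V=Y$ is also correct as far as it goes. However, it only detects $\partial$ when $\ker\bigl(H^{n}_{B}(X)\to H^{n}(X)\bigr)\neq 0$. This fails in the basic example, the blow-up of a point in $\mathbb{CP}^{2}$, where $X$ is orientable, $B\cong\mathbb{CP}^{1}$ is connected, and in fact $\mathbb{H}^{\ast}(Y;\partial)=0$ in every degree. The step meant to cover that case breaks down for three reasons.

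\emph{The local model is wrong.} For a small neighbourhood $V$ of $r\in R$ in $Y$, the set $f^{-1}(V)$ contains the whole compact fibre $f^{-1}(r)\cong F$. So $f^{-1}(V)\cap B\cong (V\cap R)\times F$, not $\mathbb{R}^{n_B}$. You have conflated $f^{-1}(V)$ with a small neighbourhood of a point of $B$ in $X$, which is what the paragraph after Lemma~\ref{TopConnHom} is about.

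\emph{The group is misidentified.} $\mathbb{H}^{\ast}_c(V;Rf_{\ast}Ri_{\ast}\underline{\mathbb{Q}}_{X\setminus B})$ is not $H^{\ast}_c(f^{-1}(V)\setminus B)$; that group belongs to $Ri_{!}$. It is the cohomology of $f^{-1}(V)\setminus B$ with supports that are compact in $f^{-1}(V)$, and these supports may accumulate on $B$.

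\emph{This group is zero.} We have $Rf_{\ast}Ri_{\ast}\underline{\mathbb{Q}}_{X\setminus B}\simeq Rj_{\ast}(\underline{\mathbb{Q}}_{Y\setminus R}\oplus\mathcal{L})$, whose $h^{!}$ vanishes. Compactly supported hypercohomology over a distinguished neighbourhood $V\cong\mathbb{R}^{n_R}\times C(L)$ of $r$ computes the costalk at $r$. Concretely, for a trivial tube $f^{-1}(V)\cong\mathbb{R}^{n_R}\times F\times\mathbb{R}^{c_B}$, Künneth reduces it to $H^{\ast}\bigl(S^{c_B-1}\times(0,\infty),\,S^{c_B-1}\times(\rho,\infty)\bigr)=0$. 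So $\mathbb{H}^{\ast}_c(V;\partial)$ has zero source for every small $V$.

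The repair, which is what the paper does, is to keep ordinary cohomology and shrink $V$. The limit $\varinjlim_{V\ni r}\mathbb{H}^{l}(V;\partial)$ is the stalk map $\mathcal{H}^{l}(\partial)_{r}$. For tube-like $f^{-1}(V)$, this is the Gysin boundary $H^{l}\bigl(S(\nu|_{f^{-1}(r)})\bigr)\to H^{l+1-c_B}(f^{-1}(r))$ of the $S^{c_B-1}$-bundle over the compact fibre, where $\nu$ is the normal bundle of $B$. Let $N$ be the top degree with $H^{N}(F;\mathbb{Q})\neq 0$. Lemma~\ref{TopConnHom} makes this map surjective onto $H^{N}(F;\mathbb{Q})$ for $l=N+c_B-1$, so $\mathcal{H}^{l}(\partial)\neq 0$.

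This removes the need for the global analysis at $V=Y$ and for the case split. It also sidesteps your unjustified claim that $B$ is orientable. The lemma assumes neither $R$ nor $F$ orientable, and orientable base and fibre do not give an orientable total space (the Klein bottle over $S^{1}$). In your non-orientable-$X$ branch $H^{n_B}(B;\mathbb{Q})$ can vanish, so that branch is not established either.
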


        \begin{proof}
	    Let $r \in R$ be a point. Choose a small neighborhood $V$ of $r$ in $Y$, such that $f^{-1}(V \cap R) \cong (V \cap R) \times F$. Note that we have the following commutative diagram
	    \begin{align*}
		\begin{tikzcd}[ampersand replacement=\&]
			f^{-1}(V) \arrow[r] \arrow[d," f \vert_{f^{-1}(V)}"] \& f^{-1}(V \cap R) \arrow[d,"f^{-1} \vert_{f^{-1}(V \cap R)}"]  \\
			V \arrow[r]  \& V \cap R 
		\end{tikzcd},
	    \end{align*}
        where the horizontal arrows are link bundles induced from the stratifications $X \supset B$, and $Y \supset R$. Since $X$ is a topological manifold and the restriction $f \vert_{X \setminus B}$ is a covering map and hence a local homeomorphism, the map $f^{-1}(V) \longrightarrow f^{-1}(V \cap R)$ is a disc bundle of rank $c_{B}:= \operatorname{codim}_{X}(B)$. By the discussion preceding the lemma, Thom isomorphism yields
        \begin{align*}
        H^{k}(\overline{f^{-1}(V \cap R)}) \xrightarrow{\cong} H^{k+c_{B}}(\overline{f^{-1}(V)},\overline{f^{-1}(V)\setminus f^{-1}(V \cap R)}).
        \end{align*}
        Using Thom isomorphism, the long exact sequence of the cohomology groups of the pair on the right-hand side of Thom isomorphism (Gysin sequence) reads
       \begin{align*}
	   \cdots \longrightarrow H^{l}(\overline{f^{-1}(V)}) \longrightarrow H^{l}(\overline{f^{-1}(V) \setminus f^{-1}(V \cap R)}) \longrightarrow H^{l-(c_{B}+1)}(\overline{f^{-1}(V \cap R)}) \longrightarrow \cdots .
       \end{align*}
       Taking the stalks of the distinguished triangle at $r$, namely
       \begin{align*}
	   (Rf_{\ast}Rk_{\ast} \underline{\mathbb{Q}}_{B}[-c_{B}])^{l}_{r} \longrightarrow (Rf_{\ast}\underline{\mathbb{Q}}_{X})^{l}_{r} \longrightarrow (Rf_{\ast}Ri_{\ast}\underline{\mathbb{Q}}_{X \setminus B})^{l}_{r} \xrightarrow{[1]}.
       \end{align*}
       yields the above Gysin sequence. Note that removing the zero section of the normal bundle of $B$ in $X$ and restricting the resulting spherical bundle to $f^{-1}(r)$ yields a $(c_{B}-1)$-spherical bundle $S(f^{-1}(r)) \longrightarrow f^{-1}(r)$. Furthermore, we have $S(f^{-1}(r)) \simeq \overline{f^{-1}(V) \setminus f^{-1}(V \cap R)}$. Lemma \ref{TopConnHom} implies that the connecting homomorphism $H^{l}(S(f^{-1}(r))) \longrightarrow H^{l-(c_{B}+1)}(f^{-1}(r))$ is non-zero, at least when $l$ is the top non-zero degree. Hence the morphism  $Rf_{\ast}Ri_{\ast}\underline{\mathbb{Q}}_{X \setminus B} \longrightarrow Rf_{\ast}Rk_{\ast} \underline{\mathbb{Q}}_{B}[-c_{B}+1]$ is non-zero.
       \end{proof}

        Using the same idea as in the proof of the previous lemma, one may reduce the study of the morphism $Rf_{\ast}Ri_{\ast}\underline{\mathbb{Q}}_{X \setminus B} \longrightarrow Rf_{\ast}Rk_{\ast} \underline{\mathbb{Q}}_{B}[-c_{B}+1]$ to analyzing Gysin sequence of the spherical bundle on $f^{-1}(U) \subset B$, for $U \subset R$. This bundle is induced by restricting the spherical bundle over $B$, which in turn comes from the normal bundle of $B$ in $X$; here $U$ is a trivializing neighborhood of $r$. Furthermore, although Lemma \ref{TopConnHom} shows that, for a given spherical bundle, the connecting homomorphism of the associated Gysin sequence in the top degree is necessarily nonzero, the connecting homomorphisms in other degrees may still vanish. Hence, it is possible that the distinguished triangle \ref{DisTri2}, after applying a well-chosen truncation, splits. In what follows, we develop this idea.

        The following lemma is the cornerstone of the proofs of the main results of this work. Recall that for a given morphism $\alpha: A \rightarrow B$ in the derived category $\mathcal{D}$ of an abelian category $\mathcal{A}$, even if all induced morphisms $\alpha^{i}:\mathcal{H}^{i}(A)\rightarrow \mathcal{H}^{i}(B)$ are zero, the morphism $\alpha$ need not be zero. On the other hand, the topology of a given situation can often only determine whether the induced morphisms $\alpha^{i}$ are zero or not. However, in specific cases, the algebraic structure that comes to our aid here is the orthogonality of the $t$-structure on the derived category $\mathcal{D}$. To be more precise, if $X \in \operatorname{obj}(\mathcal{D}^{\leq m})$ and $Y \in \operatorname{obj}(\mathcal{D}^{\geq m})$, the orthogonality implies that there is a natural isomorphism $\operatorname{Hom}_{\mathcal{D}}(X,Y)\xrightarrow{\cong} \operatorname{Hom}_{\mathcal{A}}(\mathcal{H}^{m}(X),\mathcal{H}^{m}(Y))$. Hence, in this case, $\alpha^{m}$ being zero implies that the morphism $\alpha$ is zero. The following lemma uses this observation and answers the question of whether the distinguished triangle \ref{DisTri2} splits after applying a suitable truncation.

       \begin{lemma}\label{DecompAlg}
       Let $A \xrightarrow{\alpha} D \xrightarrow{\beta} C \xrightarrow{h} A[1]$ be a distinguished triangle in the derived category $\mathcal{D}$ of an abelian category $\mathcal{A}$. Let $n \in \mathbb{Z}$ and suppose:
       	\begin{enumerate}
       		\item $\mathcal{H}^i(A) = 0$ for all $i \le n-1$ (hence $A \in \mathcal{D}^{\ge n}$);
       		\item The induced morphism on cohomology $h^{n-1} : \mathcal{H}^{n-1}(C) \longrightarrow \mathcal{H}^{n}(A)$ is zero, and the morphism $h^n:\mathcal{H}^{n}(C) \longrightarrow \mathcal{H}^{n+1}(A)$ is an isomorphism;
       		\item $\mathcal{H}^i(D) = 0$ for all $i > n$ (hence $D \in \mathcal{D}^{\le n}$).
       	\end{enumerate}
       	Then there is an isomorphism in the derived category
       	\begin{align*}
         D \simeq \tau_{\le n}A \;\oplus\; \tau_{\le n-1}C .
       \end{align*}
       	In particular $\tau_{\le n}A \simeq \mathcal{H}^n(A)[-n]$, so $D \simeq \mathcal{H}^n(A)[-n] \oplus \tau_{\le n-1}C.$
       	
       \end{lemma}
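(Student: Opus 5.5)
The plan is to work entirely with the long exact cohomology sequence of the triangle and then split off the top cohomology of $D$ using a section built from $\alpha$. Orthogonality of the $t$-structure will turn cohomology-level statements into statements in $\mathcal{D}$.

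\emph{Step 1 (cohomology).} Write out the long exact sequence
\begin{align*}
\cdots \to \mathcal{H}^{i}(A)\xrightarrow{\alpha^i}\mathcal{H}^{i}(D)\xrightarrow{\beta^i}\mathcal{H}^{i}(C)\xrightarrow{h^i}\mathcal{H}^{i+1}(A)\to\cdots .
\end{align*}
For $i\le n-2$ we have $\mathcal{H}^i(A)=\mathcal{H}^{i+1}(A)=0$, so $\beta^i$ is an isomorphism. At $i=n-1$, $\mathcal{H}^{n-1}(A)=0$ gives injectivity of $\beta^{n-1}$, and $h^{n-1}=0$ gives surjectivity. Hence $\beta^i$ is an isomorphism for all $i\le n-1$.

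At degree $n$, the hypothesis that $h^{n}$ is an isomorphism (in particular injective) forces $\beta^n=0$, so $\alpha^n$ is surjective. Since $h^{n-1}=0$, $\alpha^n$ is also injective. Thus $\alpha^n:\mathcal{H}^n(A)\xrightarrow{\cong}\mathcal{H}^n(D)$.

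\emph{Step 2 (the section).} Since $A\in\mathcal{D}^{\ge n}$, we have $\tau_{\le n}A\simeq \mathcal{H}^n(A)[-n]$; this is the ``in particular'' part of the statement. Since $D\in\mathcal{D}^{\le n}$, the truncation triangle reads
\begin{align*}
\tau_{\le n-1}D\to D\xrightarrow{p}\mathcal{H}^n(D)[-n]\xrightarrow{+1}.
\end{align*}
Define $\sigma:\mathcal{H}^n(A)[-n]\simeq\tau_{\le n}A\to A\xrightarrow{\alpha}D$. The composite $p\circ\sigma$ is a morphism between objects concentrated in degree $n$, so by the orthogonality remark preceding the lemma it is determined by its effect on $\mathcal{H}^n$. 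That effect is exactly $\alpha^n$, hence $p\circ\sigma$ is an isomorphism. Therefore $s:=\sigma\circ(p\sigma)^{-1}$ is a section of $p$.

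\emph{Step 3 (splitting and identification).} A distinguished triangle whose second morphism $p$ admits a section has zero connecting morphism, and therefore splits. This gives
\begin{align*}
D\simeq \tau_{\le n-1}D\oplus \mathcal{H}^n(D)[-n]\simeq \tau_{\le n-1}D\oplus\tau_{\le n}A .
\end{align*}
It remains to identify $\tau_{\le n-1}D$ with $\tau_{\le n-1}C$. Apply the functor $\tau_{\le n-1}$ to $\beta$; the resulting morphism $\tau_{\le n-1}\beta:\tau_{\le n-1}D\to\tau_{\le n-1}C$ induces $\beta^i$ on $\mathcal{H}^i$ for $i\le n-1$. These are isomorphisms by Step 1, so $\tau_{\le n-1}\beta$ is a quasi-isomorphism. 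Combining gives $D\simeq\tau_{\le n}A\oplus\tau_{\le n-1}C$, and hence $D\simeq\mathcal{H}^n(A)[-n]\oplus\tau_{\le n-1}C$.

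The main point needing care is Step 2. One must check that the morphism $\mathcal{H}^n(A)[-n]\to\mathcal{H}^n(D)[-n]$ really is determined by $\mathcal{H}^n$ and equals $\alpha^n$. The first fact is the natural isomorphism $\operatorname{Hom}_{\mathcal{D}}(X,Y)\cong\operatorname{Hom}_{\mathcal{A}}(\mathcal{H}^n X,\mathcal{H}^n Y)$ for $X\in\mathcal{D}^{\le n}$ and $Y\in\mathcal{D}^{\ge n}$. The second follows by naturality of the truncation maps. The splitting criterion for triangles with a split epimorphism is standard.
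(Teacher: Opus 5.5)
Your proof is correct, but it does the splitting on the opposite side from the paper.

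The paper works with the truncation triangle of $C$. Since $\mathcal{H}^n(\beta)=0$ and $D\in\mathcal{D}^{\le n}$, orthogonality of the $t$-structure kills the composite $D\to C\to\tau_{\ge n}C$, so $\beta$ lifts (uniquely) to $f\colon D\to\tau_{\le n-1}C$. The paper then shows that the fiber $X$ of $f$ is concentrated in degree $n$, and builds a section of $f$ from $\tau_{\le n-1}D\to D$ and the isomorphism $\tau_{\le n-1}f$. Only at the end does it identify $X\simeq\mathcal{H}^n(D)[-n]\simeq\mathcal{H}^n(A)[-n]$.

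You instead split the truncation triangle of $D$ itself. You use $\alpha$ (through $\tau_{\le n}A\to A$) to produce a section of $D\to\mathcal{H}^n(D)[-n]$, and then identify $\tau_{\le n-1}D$ with $\tau_{\le n-1}C$ via $\tau_{\le n-1}\beta$.

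Your route is shorter. It needs no lifting step and no auxiliary cone; the only derived-category facts used are that a morphism inducing isomorphisms on all $\mathcal{H}^i$ is an isomorphism, and that a triangle whose second map is a split epimorphism splits. It also realizes the summand $\tau_{\le n}A$ through the natural map induced by $\alpha$. In the paper, $\alpha$ enters only through the abstract isomorphism $\mathcal{H}^n(A)\cong\mathcal{H}^n(D)$.

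The paper's route, in exchange, makes the projection $D\to\tau_{\le n-1}C$ the canonical lift of $\beta$, so its decomposition is visibly compatible with $\beta$, while yours is compatible with $\alpha$.

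Both arguments use the same hypotheses. Only injectivity of $h^n$ is actually needed, since surjectivity follows from $\mathcal{H}^{n+1}(D)=0$.
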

       
       \begin{proof}
       	Let $\mathcal{D}$ be the derived category of an abelian category with its standard $t$-structure.
       	For an integer $m$, let $\tau_{\le m}$ and $\tau_{\ge m}$ be the truncation functors.
       	Recall that for $X \in \mathcal{D}^{\le m}$ and $Y \in \mathcal{D}^{\ge m}$ the natural map
       	$\operatorname{Hom}_{\mathcal{D}}(X,Y) \rightarrow \operatorname{Hom}_{\mathcal{A}}(\mathcal{H}^m(X),\mathcal{H}^m(Y))$ is an isomorphism (orthogonality of the $t$-structure).
       	
       	\noindent
       	Consider the distinguished truncation triangle $\tau_{\le n-1}C \xrightarrow{i} C \xrightarrow{p} \tau_{\ge n}C \xrightarrow{[1]}$. Note that $D \in \mathcal{D}^{\le n}$ and $\tau_{\ge n}C \in \mathcal{D}^{\ge n}$. The original distinguished triangle induces the long exact sheaf cohomology sequence $\mathcal{H}^n(A) \longrightarrow \mathcal{H}^n(D) \longrightarrow \mathcal{H}^n(C) \xrightarrow{h^n} \mathcal{H}^{n+1}(A) \longrightarrow 0$.
       	Since $h^n$ is an isomorphism and $h^{n-1}=0$, the morphism $\mathcal{H}^n(D) \longrightarrow \mathcal{H}^n(C)$ is zero. In particular $\mathcal{H}^n(\beta)=0$, so $p \circ \beta:D \rightarrow \tau_{\geq n}C$ induces the zero map on $\mathcal{H}^n$ and is therefore zero, since $\operatorname{Hom}_{\mathcal{D}}(D,\tau_{\geq n}C) \xrightarrow{\cong} \operatorname{Hom}_{\mathcal{A}}(\mathcal{H}^n(D),\mathcal{H}^n(\tau_{\geq n}C))$. Applying the $\operatorname{Hom}$ functor to the distinguished truncation triangle implies that there exists a unique morphism $f \colon D \to \tau_{\le n-1}C$ such that $i \circ f = \beta$.
       	
       	\noindent
       	Note that the morphism $f:D \longrightarrow \tau_{\le n-1}C$ can be completed to a distinguished triangle
       	$D \xrightarrow{f} \tau_{\le n-1}C \longrightarrow Z \longrightarrow D[1]$.  
       	Rotating this triangle twice and shifting by $[-1]$ gives $Z[-1](=:X) \xrightarrow{j} D \xrightarrow{f} \tau_{\le n-1}C \to Z$, which is again distinguished. From the long exact sheaf cohomology sequence of this triangle and the fact that $f$ (like $\beta$) induces isomorphisms on $\mathcal{H}^i$ for $i \le n-1$,
       	we obtain $\mathcal{H}^i(X)=0$ for $i \neq n$ and $\mathcal{H}^n(X) \cong \mathcal{H}^n(D)$ via $j$.
       	Thus $X \in \mathcal{D}^{\le n} \cap \mathcal{D}^{\ge n}$, so $X \cong \mathcal{H}^n(X)[-n]$.

       	\noindent
       	From the distinguished triangle $A \xrightarrow{\alpha} D \xrightarrow{\beta} C \xrightarrow{h} A[1]$ and the assumptions on $h$, we have that $
       	\mathcal{H}^i(D)\xrightarrow{\mathcal{H}^i(\beta)} \mathcal{H}^i(C)$ is an isomorphism for all $i\le n-1$. Because $\beta = i\circ f$ and the morphism $i$ is an isomorphism on $\mathcal{H}^i$ for $i\le n-1$, the morphism $
       	\mathcal{H}^i(f) : \mathcal{H}^i(D)\longrightarrow \mathcal{H}^i(\tau_{\le n-1}C)$
       	is an isomorphism for all $i\le n-1$.

       	\noindent
         By the previous step, the induced map $\tau_{\le n-1}f\colon \tau_{\le n-1}D \longrightarrow \tau_{\le n-1}C$ is an isomorphism. Let $\phi = \tau_{\le n-1}f$ and let $\iota\colon \tau_{\le n-1}D\to D$ be the canonical morphism. Now define $s : \tau_{\le n-1}C \xrightarrow{\phi^{-1}} \tau_{\le n-1}D \xrightarrow{\iota} D$.
       	Then $f\circ s = f\circ \iota \circ \phi^{-1} = \phi\circ\phi^{-1} = \operatorname{id}_{\tau_{\le n-1}C}$. Thus, the morphism $s$ is a right inverse (section) of $f$. It follows that $D \simeq X \;\oplus\; \tau_{\le n-1}C$.
       	
       	\noindent
       	The long exact cohomology sequence of that triangle, together with the fact that $f$ is an isomorphism on $\mathcal{H}^i$ for $i\le n-1$ and zero on $\mathcal{H}^n$, shows $\mathcal{H}^n(X)\cong \mathcal{H}^n(D)\cong \mathcal{H}^n(A)$, where the last isomorphism comes from the original triangle (since $h^n$ is an isomorphism). As mentioned, $X\in\mathcal{D}^{\le n}\cap\mathcal{D}^{\ge n}$, so $X\cong \mathcal{H}^n(A)[-n]$. Because $A\in\mathcal{D}^{\ge n}$, we have $\tau_{\le n}A\simeq \mathcal{H}^n(A)[-n]$. Therefore $D \simeq \tau_{\le n}A \oplus \tau_{\le n-1}C$,
       	as claimed.

       \end{proof}
        We now have all the necessary tools to prove our first decomposition theorem for the aforementioned simple class of almost coverings.
      
      \begin{proposition}\label{DecoTheo1}
      	Let $f : X \longrightarrow Y$ be an almost covering such that $X$ and the fibered set $R$ are closed topological manifolds, and $Y$ is a closed topological $n$-pseudomanifold. Assume that the restriction $f|_{f^{-1}(R)} : f^{-1}(R) \longrightarrow R$ is a fiber bundle with fiber $F$, a closed orientable topological manifold, and that $\operatorname{codim}_{X}(f^{-1}(R)) \geq 2$. Let $\overline{p} \in \{\overline{n}, \overline{m} \}$ be a perversity. Moreover,
      	\begin{enumerate}
      		\item suppose that the normal bundle of $B:=f^{-1}(R)$ exists and is orientable, and that for each $r \in R$ there is a trivializing neighborhood $U$ of $r$ such that the connecting homomorphism in the Gysin sequence of the normal bundle of $B$ restricted to $f^{-1}(U)$ vanishes in degree $\operatorname{codim}_{X}(B)-1$.
      		\end{enumerate}
      		Let $R(f\vert_{X \setminus B})_{\ast}\underline{\mathbb{Q}}_{X \setminus B} \simeq \underline{\mathbb{Q}}_{Y \setminus R} \oplus \mathcal{L}$, where $\mathcal{L}$ is the local system on $Y \setminus R$ induced by the covering map $f\vert_{X \setminus B}$. If $\overline{p}(\operatorname{codim}_{Y}(R))+1 = \operatorname{codim}_{X}(B)$ and $\operatorname{codim}_{Y}(R)$ is even, then
      	\begin{align}
      		Rf_{\ast}\underline{\mathbb{Q}}_{X}[n] \simeq h_{\ast} \big((R^{0}f\vert_{B})_{\ast} \underline{\mathbb{Q}}_{B}[\dim(B)]\big) \oplus IC^{\overline{p}}_{Y} (\underline{\mathbb{Q}}_{Y \setminus R} \oplus \mathcal{L}),
      	\end{align}
      	where $h: R\hookrightarrow Y$ is the inclusion, and $IC^{\overline{p}}_{Y}$ is Deligne's sheaf complex with respect to the stratification $Y \supset R$.
      \end{proposition}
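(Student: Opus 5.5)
The plan is to apply $Rf_{\ast}$ to the distinguished triangle \eqref{DisTri1}, shift by $[n]$, and then use Lemma \ref{DecompAlg} stalkwise over $R$ to split it. Since $f$ is proper, $Rf_{\ast}Rk_{\ast}=Rh_{\ast}R(f\vert_B)_{\ast}$ and $Rf_{\ast}Ri_{\ast}=Rj_{\ast}R(f\vert_{X\setminus B})_{\ast}$. By Proposition \ref{PropDecomp1} this gives
\begin{align*}
Rh_{\ast}R(f\vert_B)_{\ast}\underline{\mathbb{Q}}_{B}[n-c_B] \longrightarrow Rf_{\ast}\underline{\mathbb{Q}}_{X}[n] \longrightarrow Rj_{\ast}(\underline{\mathbb{Q}}_{Y\setminus R}\oplus\mathcal{L})[n] \xrightarrow{[1]}.
\end{align*}
Since the stratification $Y\supset R$ has only one singular stratum, Definition \ref{DeSheaf} reduces to $IC^{\overline{p}}_Y(\underline{\mathbb{Q}}_{Y\setminus R}\oplus\mathcal{L})=\tau_{\le \overline{p}(c_R)-n}Rj_{\ast}(\underline{\mathbb{Q}}_{Y\setminus R}\oplus\mathcal{L})[n]$. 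The target is therefore exactly the shape of Lemma \ref{DecompAlg}, with truncation degree $m:=\overline{p}(c_R)-n+1=c_B-n$ (using the hypothesis $\overline{p}(c_R)+1=c_B$). In that lemma we take $A=Rh_{\ast}R(f\vert_B)_{\ast}\underline{\mathbb{Q}}_B[n-c_B]$, $D=Rf_{\ast}\underline{\mathbb{Q}}_X[n]$, $C=Rj_{\ast}(\cdots)[n]$, and the lemma's $n$ is $m$.

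The hypotheses are checked in the following order.

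\textbf{Hypothesis (1).} $A$ has cohomology sheaves $h_{\ast}R^{q}(f\vert_B)_{\ast}\underline{\mathbb{Q}}_B$ placed in degree $q+c_B-n\ge m$. Hence $A\in\mathcal{D}^{\ge m}$ and $\mathcal{H}^m(A)=h_{\ast}(R^0f\vert_B)_{\ast}\underline{\mathbb{Q}}_B$.

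\textbf{Hypothesis (3).} $\mathcal{H}^i(D)=R^{i+n}f_{\ast}\underline{\mathbb{Q}}_X$. Since $f$ is proper, its stalk at $r\in R$ is $H^{i+n}(f^{-1}(r))$, via a neighborhood of $f^{-1}(r)$ retracting onto it (the remark after Definition \ref{SuiAlCov}). This vanishes for $i+n>\dim F$. We then need $\dim F\le c_B$; this is the semi-small dimension count. I would derive it as follows: $c_R$ even and $\overline{p}(c_R)+1=c_B$ give $c_B=c_R/2$ (for $\overline{m}$; the case $\overline{n}$ is the same since $c_R$ is even). Combined with $\dim B=\dim R+\dim F$, this yields $\dim F=c_R-c_B=c_B$. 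So $D\in\mathcal{D}^{\le m}$ on $R$, and on $Y\setminus R$ the complex $D$ is a local system in degree $-n\le m$.

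\textbf{Hypothesis (2).} This is the stalk computation from the proof of Lemma \ref{SplittingLemma1}. At $r\in R$ with a trivializing neighborhood $U$, the long exact sequence of stalks is the Gysin sequence of the sphere bundle $S\to f^{-1}(U)$, with fiber $S^{c_B-1}$. The map $h^{m-1}\colon\mathcal{H}^{m-1}(C)_r\to\mathcal{H}^m(A)_r$ becomes the connecting map $H^{c_B-1}(S)\to H^0(f^{-1}(U))$, which is the degree-$(c_B-1)$ connecting homomorphism. It vanishes by assumption (1). The map $h^m\colon H^{c_B}(S)\to H^1(f^{-1}(U))$ should be an isomorphism. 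I would deduce this from the Gysin sequence segment
\begin{align*}
H^{c_B}(f^{-1}(U))\to H^{c_B}(S)\to H^1(f^{-1}(U))\to H^{c_B+1}(f^{-1}(U)),
\end{align*}
together with the analysis of the restriction $H^{c_B}(f^{-1}(U))\to H^{c_B}(S)$, which is the transition from $D$ to $C$.

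Once the hypotheses hold, Lemma \ref{DecompAlg} gives
\begin{align*}
D\simeq \mathcal{H}^m(A)[-m]\oplus\tau_{\le m-1}C=h_{\ast}\big((R^0f\vert_B)_{\ast}\underline{\mathbb{Q}}_B\big)[n-c_B]\oplus IC^{\overline{p}}_Y,
\end{align*}
and $n-c_B=\dim B$, which is the claim.

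\textbf{Main obstacle.} I expect the main difficulty to be the isomorphism condition on $h^m$. One must show that $H^{c_B}(f^{-1}(U))\to H^{c_B}(S)$ is zero and that the next map is injective. This should use $\dim F=c_B$ and orientability of $F$: the top class of $F$ should map to zero on $S$ by Lemma \ref{TopConnHom} and the Euler class. If a direct argument fails, I would relax Lemma \ref{DecompAlg} so that it only requires $\mathcal{H}^m(D)\to\mathcal{H}^m(C)$ to be zero. That condition follows from $h^{m-1}=0$ together with the identification $\mathcal{H}^m(D)\cong\mathcal{H}^m(A)$, which in turn comes from the Thom-class description of $H^{c_B}(f^{-1}(U))\cong H^{c_B}(F)\cong\mathbb{Q}^{\pi_0}$.
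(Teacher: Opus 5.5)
Your proposal is correct and follows the paper's proof. You push the Gysin triangle forward, use $\dim F=c_B$, and apply Lemma~\ref{DecompAlg} with $h^{m-1}=0$ from the vanishing hypothesis; the isomorphism $h^{m}$ then comes exactly from your fallback reasoning, because exactness at $H^{0}(F)$ makes $e\cup\colon H^{0}(F)\to H^{c_B}(F)$ injective, hence bijective as $F$ is closed orientable of dimension $c_B$, so $H^{c_B}(F)\to H^{c_B}(S)$ vanishes and $\delta_{c_B}$ is injective, and surjective since $H^{c_B+1}(F)=0$. No relaxation of Lemma~\ref{DecompAlg} is needed, since with $D\in\mathcal{D}^{\le m}$ the map $h^{m}$ is automatically surjective and is injective exactly when $\mathcal{H}^{m}(D)\to\mathcal{H}^{m}(C)$ vanishes, and Lemma~\ref{TopConnHom} plays no role in this degree.
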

      
      \begin{proof}
       Let $y \in R$ and set $n_{F}:=\dim(F)$, $c_{R}:=\operatorname{codim}_{Y}(R)$, and $c_{B}:=\operatorname{codim}_{X}(B)$. Using Proposition \ref{RefinedStra}, it follows that the filtration $Y \supset R$ are stratification. Consider the following commutative diagram
      	\begin{align*}
      		\begin{tikzcd}[ampersand replacement=\&]
      			B \arrow[r,  hookrightarrow,"k"] \arrow[d," f \vert_{B}"] \& X \arrow[d,"f"] \arrow[r, hookleftarrow,"i"] \& X \setminus B   \arrow[d,"f \vert_{X \setminus B}"] \\
      			R \arrow[r, hookrightarrow,"h"]  \& Y \arrow[r, hookleftarrow,"j"] \& Y \setminus R
      		\end{tikzcd}.
      	\end{align*}
      	The commutativity of diagram results in the equalities $Rh_{\ast} \circ R(f \vert_{B})_{\ast} \simeq Rf_{\ast} \circ Rk_{\ast}$ and $Rj_{\ast} \circ R(f \vert_{X \setminus B})_{\ast} \simeq Rf_{\ast} \circ Ri_{\ast}$.

      	\noindent
         Note that if $\operatorname{codim}_{Y}(R)$ is even $\overline{p}(c_{R}) = \frac{c_{R}}{2}-1$. As a result, the assumption $\overline{p}(c_{R}) = c_{B}-1$ implies $n_{F} = c_{B}$. Furthermore, since $(R^{i}f_{\ast}\underline{\mathbb{Q}}_{X})_{y}=0$ for $i > n_{F}$ and for all $y \in Y$, Lemma \ref{CanonMorph} shows that the natural morphism $\tau_{\leq c_{B}} Rf_{\ast}\underline{\mathbb{Q}}_{X} \longrightarrow Rf_{\ast} \tau_{\leq c_{B}}\underline{\mathbb{Q}}_{X}(\simeq Rf_{\ast}\underline{\mathbb{Q}}_{X})$ is a quasi-isomorphism.

         \noindent
         Our goal is to apply Lemma \ref{DecompAlg} to the distinguished triangle
         \begin{align*}
         		Rf_{\ast}Rk_{\ast}\underline{\mathbb{Q}}_{B}[-c_B]
         	\longrightarrow
         	Rf_{\ast}\underline{\mathbb{Q}}_{X}
         	\longrightarrow
         	Rf_{\ast}Ri_{\ast}\underline{\mathbb{Q}}_{X\setminus B}
         	\xrightarrow{[1]}.
         \end{align*}
         
         Let $U \subset B$ be a trivializing neighborhood of $r \in R$, i.e., $f^{-1}(U) \cong F \times \mathbb{R}^{\dim(B)}$. Furthermore, let $\pi: E \longrightarrow f^{-1}(U)$ be the associated $(c_{B}-1)$-spherical bundle over $f^{-1}(U)$ in $X$, obtained by restricting the spherical bundle over $B$ to $f^{-1}(U)$. Taking stalks of the above distinguished triangle and using $f^{-1}(U) \simeq F$ results in the following Gysin sequence
         \begin{align*}
         	0 \rightarrow H^{c_{B}-1}(F) \rightarrow H^{c_{B}-1}(E) &\xrightarrow{\delta_{c_{B}-1}(=0)}H^{0}(F)   \rightarrow H^{c_{B}}(F)  \rightarrow H^{c_{B}}(E) \xrightarrow{\delta_{c_{B}}(\cong)}H^{1}(F) \rightarrow 0.
         \end{align*}  
         Note that since $F$ is orientable, it follows that $H^{c_{B}}(F)$ is not trivial and hence the homomorphism $H^{0}(F)   \rightarrow H^{c_{B}}(F)$ is an isomorphism. The exactness yields that $\delta_{c_{B}}$ is an isomorphism.
         It follows that the morphism $\mathcal{H}^{i}(
         Rf_{\ast}Ri_{\ast}\underline{\mathbb{Q}}_{X\setminus B}) \longrightarrow \mathcal{H}^{i+1}(
         Rf_{\ast}Rk_{\ast}\underline{\mathbb{Q}}_{B}[-c_B])$ is zero for $i < c_{B}$ and an isomorphism for $i=c_{B}$.

         \noindent
         Lemma \ref{DecompAlg} implies $Rf_{\ast}\underline{\mathbb{Q}}_{X} \simeq	\tau_{\leq c_{B}} Rf_{\ast}Rk_{\ast}\underline{\mathbb{Q}}_{B}[-c_B] \oplus \tau_{\leq c_{B}-1}
         	Rf_{\ast}Ri_{\ast}\underline{\mathbb{Q}}_{X\setminus B}$. Note that the inclusion $h:R \hookrightarrow Y$ is closed inclusion. Hence, using the commutativity of the above diagram, considering $\overline{p}(c_{R})=c_{B}-1$, and applying a shift by $[n]$ proves the claim.
          \end{proof}

      \begin{remark}
      	The decomposition theorem for algebraic semi-small maps is well known and studied in detail (see, e.g., \cite{deCataldoMigliorini2002}). In the situation of Proposition \ref{DecoTheo1}, if the spaces $X$, $B$, and $R$ have real even dimensions, then the condition $\overline{p}(\operatorname{codim}_{Y}(R))+1 = \operatorname{codim}_{X}(B)$ is equivalent to $2 \dim_{\mathbb{C}}(F)+\dim_{\mathbb{C}}(R)=\dim_{\mathbb{C}}(X)$. The latter condition is the same as requiring the map $f:X \longrightarrow Y$ to behave like a semi-small map with respect to the dimensions of $F$ and $R$. In fact, if the almost covering $f:X \longrightarrow Y$ is semi-small and an algebraic morphism, then the statement of Proposition \ref{DecoTheo1} is equivalent to the decomposition theorem for algebraic almost covering maps.
      \end{remark}

        The above proposition gives a condition under which the shifted derived direct image $Rf_{\ast}\underline{\mathbb{Q}}_{X}[n]$ is quasi-isomorphic to the direct sum of Deligne sheaf complex $IC_{Y}^{\overline{p}}(\underline{\mathbb{Q}}_{Y \setminus R} \oplus \mathcal{L})$ which is determined solely by the covering part of the almost covering and a contribution from the fiber bundle component of the map. However, in the situation of the above proposition, the assumptions on $R$ and $F$ force the filtration $Y \supset R$ to be a stratification. The upcoming theorem answers the decomposition question for arbitrary stratifications $Y \supset Y_{n-2} \supset \cdots$ and $R \supset R_{n-3} \supset \cdots$.

     \begin{theorem}\label{DecoTheo1b}
     	Let $f : X \longrightarrow Y$ be an almost covering such that $X$ is a closed topological manifold, the fibered set $R$ is closed  stratified space, and $Y$ is a closed topological $n$-pseudomanifold. Assume that the restriction $f|_{f^{-1}(R)} : f^{-1}(R) \longrightarrow R$ is a fiber bundle with fiber $F$, a closed orientable topological manifold, and that $\operatorname{codim}_{X}(f^{-1}(R)) \geq 2$. Furthermore, let the filtration $Y \supset Y_{n_{R}}(=R) \supset Y_{n_{R}-1} \supset \cdots$ be the refined stratification on $Y$ obtained by Proposition~\ref{RefinedStra}, where $n_{R}:= \dim(R)$. Set $S_{k}:= Y_{k} \setminus Y_{k-1}$ and assume the following:
     	\begin{enumerate}
     		\item suppose that the normal bundle over $f^{-1}(S_{k})$ in $X$ exists and is orientable, and that for each point in $R \setminus Y_{n_{R} -1}$, there is a trivializing neighborhood $U$ such that the connecting homomorphism in the Gysin sequence of the normal bundle over $f^{-1}(U)$ vanishes in degree $\operatorname{codim}_{X}(B)-1$.
     		\end{enumerate}
     		Let $R(f\vert_{X \setminus B})_{\ast}\underline{\mathbb{Q}}_{X \setminus B} \simeq \underline{\mathbb{Q}}_{Y \setminus R} \oplus \mathcal{L}$, where $\mathcal{L}$ is the local system on $Y \setminus R$ induced by the covering map $f\vert_{X \setminus B}$. If $\overline{p}(\operatorname{codim}_{Y}(R))+1 = \operatorname{codim}_{X}(B)$ and the refined strata of $Y$ are even-codimensional, then
     	\begin{align}
     	Rf_{\ast}\underline{\mathbb{Q}}_{X}[n] \simeq h_{\ast}(IC^{\overline{p}}_{R}\big(R^{0}( f \vert_{f^{-1}(S_{n_{R}}) })_{\ast}\underline{\mathbb{Q}}_{f^{-1}(S_{n_{R}})}\big) \oplus IC_{Y}^{\overline{p}}(\underline{\mathbb{Q}}_{Y \setminus R} \oplus \mathcal{L} ),
     	\end{align}
     	where $h: R\hookrightarrow Y$ is the canonical closed inclusion.
     \end{theorem}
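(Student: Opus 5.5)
The plan is to reduce to Proposition \ref{DecoTheo1} on the open part and then extend over the deeper strata $Y_{n_R-1}$ inductively, using the recursive definition of Deligne's sheaf complex (Definition \ref{DeSheaf}).

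\textbf{Step 1: the open part.} Set $Y^{\circ}:=Y\setminus Y_{n_R-1}$, $R^{\circ}:=S_{n_R}=R\setminus Y_{n_R-1}$ and $X^{\circ}:=f^{-1}(Y^{\circ})$. Then $X^{\circ}$ is a manifold, $R^{\circ}$ is a manifold, and $f^{-1}(R^{\circ})\to R^{\circ}$ is a bundle with fiber $F$. The proof of Proposition \ref{DecoTheo1} is local over $Y$: it uses stalks, the Gysin sequence over trivializing neighbourhoods, and Lemma \ref{DecompAlg}. Closedness of $X$ and $R$ enters only through properness, which still holds for $f|_{X^\circ}$. Applying the same argument to the triangle \eqref{DisTri2} restricted to $Y^{\circ}$, with hypothesis (1) at points of $R\setminus Y_{n_R-1}$, gives
\begin{align*}
Rf_{\ast}\underline{\mathbb{Q}}_{X}[n]\big|_{Y^{\circ}}\simeq h^{\circ}_{\ast}\big(\mathcal{M}[\dim R]\big)\oplus \tau_{\leq \overline{p}(c_R)-n}Rj_{\ast}(\underline{\mathbb{Q}}_{Y\setminus R}\oplus\mathcal{L})[n],
\end{align*}
where $\mathcal{M}:=R^{0}(f|_{f^{-1}(S_{n_R})})_{\ast}\underline{\mathbb{Q}}$ and $h^\circ:R^\circ\hookrightarrow Y^\circ$ is the inclusion. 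Both summands are the restrictions to $Y^{\circ}$ of the two terms in the claimed formula. The first is the first step of $IC^{\overline{p}}_R(\mathcal{M})$. The second is the first truncation step in Definition \ref{DeSheaf} for $IC_Y$; the refined strata between $R^\circ$ and the top stratum do not matter here, since $Y_{\rm sing}\subset R$ by the proof of Proposition \ref{RefinedStra}.

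\textbf{Step 2: induction over the lower strata.} Let $U_k:=Y\setminus Y_{k-1}$ and $\iota_k:U_{k+1}\hookrightarrow U_k$, and induct downward on $k$. Write $P:=Rf_{\ast}\underline{\mathbb{Q}}_X[n]$. Suppose $P|_{U_{k+1}}\simeq A_{k+1}\oplus C_{k+1}$, with $A_{k+1}$ and $C_{k+1}$ the stages of $h_\ast IC_R(\mathcal{M})$ and $IC_Y(\underline{\mathbb{Q}}\oplus\mathcal{L})$ on $U_{k+1}$. The aim is to show
\begin{align*}
P|_{U_k}\simeq \tau_{\le \overline{p}(c)-n}R\iota_{k\ast}(P|_{U_{k+1}}),
\end{align*}
where $c$ is the codimension of $S_k$ in $Y$. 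Since $h$ is a closed inclusion, $Rh_\ast$ commutes with $R\iota_{k\ast}$, and the truncation splits over the sum. The codimension of $S_k$ in $R$ is $c-c_R$. The even-codimension hypothesis makes the truncation indices agree after the shift by $\dim R$: with $\overline{p}(c)=c/2-1$ and $\overline{p}(c_R)=c_R/2-1$, one gets $\overline{p}(c)-n=\overline{p}(c-c_R)-\dim R+(\text{a constant absorbed by } c_R=2c_B)$. This is the check I would do carefully, and it is what forces the hypothesis $\overline{p}(c_R)+1=c_B$.

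To identify $P|_{U_k}$ with the Deligne extension, I would use the axiomatic characterization of the Deligne complex (Goresky--MacPherson axioms AX1). There are three things to check.
\begin{itemize}
\item[(a)] $P$ is constructible with respect to the refined stratification. This follows since $f$ is a fiber bundle over each $S_k$ (Lemma \ref{StratTotSpac}) and suitable in the sense of Definition \ref{SuiAlCov}.
\item[(b)] The stalk vanishing $\mathcal{H}^i(P)_y=0$ for $i>\overline{p}(c)-n$ at $y\in S_k$. Proper base change gives $\mathcal{H}^i(P)_y=H^{i+n}(f^{-1}(y))$, so this reduces to $\dim f^{-1}(y)\le \overline{p}(c)$. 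This should follow from the dimension relation at the top of $R$ together with the fact that fibers over deeper strata are limits of the generic fiber (Lemma \ref{FibersMap}, via mapping cylinders). The bound must still be verified.
\item[(c)] The attaching condition: $P|_{U_k}\to R\iota_{k\ast}P|_{U_{k+1}}$ is an isomorphism on $\mathcal{H}^i$ for $i\le\overline{p}(c)-n$. This follows from Verdier self-duality of $P$ (since $f$ is proper, $X$ is a manifold, and $\mathbb{D}Rf_\ast=Rf_\ast\mathbb{D}$) combined with (b) for the complementary perversity. Here $\overline{n}$ and $\overline{m}$ agree on even codimensions, which is why the theorem allows either.
\end{itemize}

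\textbf{Main obstacle.} The hard step is (b)/(c) over the deep strata $S_k\subset Y_{n_R-1}$. There, hypothesis (1) is only given over $R\setminus Y_{n_R-1}$, and the fiber over $S_k$ may jump in dimension. I would first try to bound $\dim f^{-1}(y)$ from the mapping-cylinder description $f^{-1}(\text{cone nbhd})\simeq f^{-1}(y)$ of Lemma \ref{FibersMap}. If that is insufficient, I would assume semismallness at these strata, so that $2\dim f^{-1}(y)+\dim S_k\le n$, which gives (b) directly. Self-duality would then give (c), and the induction closes to yield the stated decomposition.
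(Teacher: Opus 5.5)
\textbf{Gap: the stalk bound (b).} Your architecture matches the paper's. You run the argument of Proposition~\ref{DecoTheo1} over $Y\setminus Y_{n_R-1}$, then induct downward, proving $P|_{U_k}\simeq\tau_{\le\overline{p}(c)-n}R\iota_{k\ast}(P|_{U_{k+1}})$ and splitting across the closed inclusion $h$. The gap is the step you call the main obstacle. You leave (b) unverified, with a semismallness hypothesis as fallback, because you expect the fibre dimension to jump over $Y_{n_R-1}$.

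It cannot jump here. By hypothesis $f|_{f^{-1}(R)}$ is a bundle with fibre $F$ over \emph{all} of $R$, so the fibre over every $y\in S_k\subset Y_{n_R-1}$ is $F$. Proper base change gives the stalks, and Lemma~\ref{FibersMap} is not needed. The bound is then arithmetic:
\begin{itemize}
\item $\overline{p}(c_R)+1=c_B$ with $c_R$ even gives $c_R=2c_B$, hence $\dim F=c_R-c_B=c_B$.
\item A deeper stratum has even codimension $c>c_R$, so $c\ge c_R+2$ and $\overline{p}(c)=c/2-1\ge c_B=\dim F$. This is (b).
\item Since $\overline{q}(c)=\overline{p}(c)$ for even $c$, the same inequality is exactly what your duality argument in (c) needs.
\end{itemize}
Semismallness on these strata holds automatically, and strictly: $2\dim F+\dim S_k\le n-2$.

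\textbf{Shift error in Step 1.} The argument of Proposition~\ref{DecoTheo1} produces $h_\ast\mathcal{M}[\dim B]$, not $\mathcal{M}[\dim R]$, and $\dim B=\dim R+c_B$. With the correct shift the indices match exactly: $\overline{p}(c)-n=\overline{p}(c-c_R)-\dim B$. So the ``constant absorbed by $c_R=2c_B$'' is not absorbed. It is a shift by $c_B$ that stays in the summand, so $IC_R$ in the statement must be normalised by $[\dim B]$ on $S_{n_R}$. That is what the paper's proof actually carries, via $[-c_B]$ followed by $[n]$.

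\textbf{Comparison of the inductive step.} Here your route genuinely differs from the paper's, which never uses Verdier duality on $Y$. The paper takes the triangle $Rf_\ast Rk_\ast k^!\underline{\mathbb{Q}}_X\to Rf_\ast\underline{\mathbb{Q}}_X\to Rf_\ast Ri_\ast i^\ast\underline{\mathbb{Q}}_X\xrightarrow{[1]}$ for $Z=f^{-1}(S_k)$, with $k^!\underline{\mathbb{Q}}_X\simeq\underline{\mathbb{Q}}_Z[-c_{B_k}]$ and $c_{B_k}=c-c_B$. In unshifted degrees the cone of the attaching map lies in degrees $\ge c_{B_k}-1$, so attaching holds through degree $c_{B_k}-2\ge\overline{p}(c)$. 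This is the paper's inequality $\overline{p}(c_{R_3})<c_{B_3}-1$. Like your $\dim F\le\overline{q}(c)$, it unwinds to $c\ge 2c_B+2$.

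Your duality route is valid and avoids the normal-bundle hypothesis over the deep strata. The paper's argument does not really need it either: $k^!\underline{\mathbb{Q}}_X$ is a rank-one local system in degree $c_{B_k}$ for any closed submanifold $Z$ of the manifold $X$. The cost of your route is that duality only computes pointwise costalks $H^\ast(j_x^!P)$. Turning these into the attaching condition requires $i_{S_k}^!P$ to have locally constant cohomology along $S_k$. You can get this from the Goresky--MacPherson/Borel stability of constructibility under $i^!$ on topological pseudomanifolds. Alternatively, use $i_{S_k}^!P\simeq R(f|_Z)_\ast k^!\underline{\mathbb{Q}}_X[n]$; but that is the paper's computation, and it already yields the attaching condition directly.
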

   
     \begin{proof}
     Let the filtration $Y \supset Y_{n_{R}}(=R) \supset Y_{n_{R}-1} \supset \cdots$ be the refined stratification obtained from Proposition~\ref{RefinedStra}. Note that the restriction map $f \vert_{S_{k}}$ is a fiber bundle with fiber $F$, a closed topologcial manifold, for $0<k<n_{R}$. Let $X \supset X_{n_{B}}(=B) \supset X_{n_{B}-1} \supset \cdots$ be the filtration obtained by setting $B_{k}:=f^{-1}(Y_{k+(n_{B}-n_{R})})$, where $n_{B}:=\dim(B)$. Since the fiber is a closed orientable topological manifolds, it follows that the sets $X_{k} \setminus X_{k-1}$ are topological manifolds. Note that, this filtration is not in general a topological stratification.

     \noindent
     Consider the commutative diagram
     	\begin{align*}
     	\begin{tikzcd}[ampersand replacement=\&]
     		B \setminus X_{n_{B}-1} \arrow[r,  hookrightarrow,"k_{2}"] \arrow[d," f \vert_{B \setminus X_{n_{B}-1}}"] \& X \setminus X_{n_{B}-1} \arrow[d,"f \vert_{X \setminus X_{n_{B}-1}}"] \arrow[r, hookleftarrow,"i_{2}"] \& X \setminus B   \arrow[d,"f \vert_{X \setminus B}"] \\
     		R \setminus Y_{n_{R}-1} \arrow[r, hookrightarrow,"h_{2}"]  \& Y \setminus Y_{n_{R}-1} \arrow[r, hookleftarrow,"j_{2}"] \& Y \setminus R
     	\end{tikzcd}.
     \end{align*}
     By assumption, the normal bundle bundle over $B \setminus X_{n_{B}-1}$ in $X$ exists and is orientable. Hence, we obtain the following distinguished triangle
     
     \begin{align*}
     	R(f\vert_{X \setminus X_{n_{B}-1}})_{\ast}Rk_{2 \ast} \underline{\mathbb{Q}}_{ B \setminus X_{n_{B}-1}}[-c_{B}] \rightarrow R(f\vert_{X \setminus X_{n_{B}-1}})_{\ast} \underline{\mathbb{Q}}_{X \setminus X_{n_{B}-1}} \rightarrow R(f\vert_{X \setminus X_{n_{B}-1}})_{\ast}Ri_{2 \ast}\underline{\mathbb{Q}}_{X \setminus B} \xrightarrow{[1]}.
     \end{align*}
 
     An inductive application of Lemma \ref{FibersMap} shows that each point in $B \setminus X_{n_{B}-1}$ has an small open neighbourhood that deformation retracts to $F$. Hence, as in the proof of Proposition \ref{DecoTheo1}, the vanishing assumption on the connecting homomorphisms in Gysin sequence and applying the truncation functor $\tau_{\leq c_{B}}$ to the above distinguished triangle results in the decomposition
     \begin{align*}
     	R(f\vert_{X \setminus X_{n_{B}-1}})_{\ast} \underline{\mathbb{Q}}_{X \setminus X_{n_{B}-1}} \simeq R^{0}(h_{2} \circ f \vert_{B \setminus X_{n_{B}-1}})_{\ast}\underline{\mathbb{Q}}_{B \setminus X_{n_{B}-1}}[-c_{B}] \oplus \tau_{\leq \overline{p}(c_{R})} Rj_{2 \ast} (\underline{\mathbb{Q}}_{Y \setminus R} \oplus \mathcal{L} ),
     \end{align*}
     where we used $\overline{p}(c_{R})=c_{B}-1$ and that the morphism obtained by Lemma~\ref{CanonMorph} is a quasi-isomorphism, since $\dim(F)=c_{B}(:=n-n_{B})$. Recall that $h_{2}$ is a closed inclusion. Hence the above quasi-isomorphism can be rewritten as
      \begin{align*}
     	R(f\vert_{X \setminus X_{n_{B}-1}})_{\ast} \underline{\mathbb{Q}}_{X \setminus X_{n_{B}-1}} \simeq h_{2\ast}(R^{0}(f \vert_{B \setminus X_{n_{B}-1}})_{\ast}\underline{\mathbb{Q}}_{B \setminus X_{n_{B}-1}}[-c_{B}] \oplus \tau_{\leq \overline{p}(c_{R})} Rj_{2 \ast} (\underline{\mathbb{Q}}_{Y \setminus R} \oplus \mathcal{L} ),
     \end{align*}
     
     \noindent
     We now study the following diagram
     	\begin{align*}
     	\begin{tikzcd}[ampersand replacement=\&]
     		X_{n_{B}-1} \setminus X_{n_{B}-2} \arrow[r,  hookrightarrow,"k_{3}"] \arrow[d," f \vert_{X_{n_{B}-1} \setminus X_{n_{B}-2}}"] \& X \setminus X_{n_{B}-2} \arrow[d,"f \vert_{X \setminus X_{n_{B}-2}}"] \arrow[r, hookleftarrow,"i_{3}"] \& X \setminus X_{n_{B}-1}   \arrow[d,"f \vert_{X \setminus X_{n_{B}-1}}"] \\
     		 Y_{n_{R}-1} \setminus Y_{n_{R}-2} \arrow[r, hookrightarrow,"h_{3}"]  \& Y \setminus Y_{n_{R}-2} \arrow[r, hookleftarrow,"j_{3}"] \& Y \setminus Y_{n_{R}-1}
     	\end{tikzcd}.
     \end{align*}
     Since, by the assumption, the normal bundle over the stratum $X_{n_{B}-1} \setminus X_{n_{B}-2}$ in $X$ exists and is orientable, we have the following distinguished triangle
     \begin{align*}
     		R(f\vert_{X_{n_{B}-1} \setminus X_{n_{B}-2}})_{\ast}Rk_{3 \ast} &\underline{\mathbb{Q}}_{ X_{n_{B}-1} \setminus X_{n_{B}-2}}[-c_{B_3}] \rightarrow 	R(f\vert_{X_{n_{B}-1} \setminus X_{n_{B}-2}})_{\ast} \underline{\mathbb{Q}}_{X \setminus X_{n_{B}-2}} \\ &\rightarrow 	R(f\vert_{X_{n_{B}-1} \setminus X_{n_{B}-2}})_{\ast}Ri_{3 \ast}\underline{\mathbb{Q}}_{X \setminus X_{n_{B}-1}} \xrightarrow{[1]},
     \end{align*}
     where $c_{B_{3}}:=\operatorname{codim}_{X}(	X_{n_{B}-1} \setminus X_{n_{B}-2})$. Set $c_{R_{3}}:=\operatorname{codim}_{Y}( Y_{n_{R}-1} \setminus Y_{n_{R}-2})$, and note that $\overline{p}(c_{R_{3}})<c_{B_{3}}-1$. Hence, the natural morphism obtained from Lemma~\ref{CanonMorph} is a quasi-isomorphism and by applying the truncation functor $\tau_{\overline{p}(c_{R_{3}})}$ to the above distinguished triangle, we obtain
     \begin{align*}
     	R(f\vert_{X_{n_{B}-1} \setminus X_{n_{B}-2}})_{\ast} \underline{\mathbb{Q}}_{X \setminus X_{n_{B}-2}} &\simeq  \tau_{\leq \overline{p}(c_{R_{3}})}Rj_{3 \ast}(h_{2\ast}(R^{0}(f \vert_{B \setminus X_{n_{B}-1}})_{\ast}\underline{\mathbb{Q}}_{B \setminus X_{n_{B}-1}}[-c_{B}])\\ &\oplus \tau_{\leq \overline{p}(c_{R_{3}})}Rj_{3 \ast}\tau_{\leq \overline{p}(c_{R})} Rj_{2 \ast} (\underline{\mathbb{Q}}_{Y \setminus R} \oplus \mathcal{L} ).
     \end{align*} 
     Consider the commutative diagram
      	\begin{align*}
      	\begin{tikzcd}[ampersand replacement=\&]
      	R \setminus Y_{n_{R}-1} \arrow[r,hookrightarrow,"h_{2}"] \arrow[d,hookrightarrow,"j_{3}'"]	\& Y \setminus Y_{n_{R}-1} \arrow[d,hookrightarrow,"j_{3}"] \\
      	R \setminus Y_{n_{R}-2} \arrow[r,hookrightarrow,"h_{2}'"]	\& Y \setminus Y_{n_{R}-2} 
      	\end{tikzcd},
      \end{align*}
     and note that the inclusion $h_{2}'$ is closed. It follows that 
      \begin{align*}
     	R(f\vert_{X_{n_{B}-1} \setminus X_{n_{B}-2}})_{\ast} \underline{\mathbb{Q}}_{X \setminus X_{n_{B}-2}} &\simeq h_{2\ast}' \big( \tau_{\leq \overline{p}(c_{R_{3}})}Rj_{3 \ast}(R^{0}(f \vert_{B \setminus X_{n_{B}-1}})_{\ast}\underline{\mathbb{Q}}_{B \setminus X_{n_{B}-1}}[-c_{B}])\big) \\ &\oplus \tau_{\leq \overline{p}(c_{R_{3}})}Rj_{3 \ast}\tau_{\leq \overline{p}(c_{R})} Rj_{2 \ast} (\underline{\mathbb{Q}}_{Y \setminus R} \oplus \mathcal{L} ).
     \end{align*}
     
     By inductive sue of the previous step and performing a shift by $[n]$, we arrive at
     \begin{align*}
     	Rf_{\ast}\underline{\mathbb{Q}}_{X}[n] \simeq h_{\ast}(IC^{\overline{p}}_{R}\big(R^{0}( f \vert_{B \setminus X_{n_{B}-1}})_{\ast}\underline{\mathbb{Q}}_{B \setminus X_{n_{B}-1}}\big) \oplus IC_{Y}^{\overline{p}}(\underline{\mathbb{Q}}_{Y \setminus R} \oplus \mathcal{L} ),
     \end{align*}
     where $h:R \hookrightarrow Y$, which is a closed inclusion.
     \end{proof}

    \subsection{Almost Covering  Maps with Fibered Set Containing more than One Fiber Bundle}
    In this section, we want to study covering maps such that the fibered set contains more than one fiber bundle. However, there are some technical details that we need to investigate before presenting the main theorem of this section.

    Let $f:X \longrightarrow Y$ be an almost covering with the decomposition of the fibered set $R= \bigsqcup_{\beta}  T_{\beta}$, where $T_{\beta}$ is a topological stratified space for each $\beta$. Assume that $X$ is a closed $n$-manifold and $Y$ is a closed $n$-pseudomanifold. As mentioned before, we consider only suitable decompositions in the sense of Definition \ref{SutStrat} (i.e., the conditions in Lemma \ref{StratR} are satisfied). In particular the above decomposition of $R$ should satisfy the frontier condition. Let the closed orientable topological manifold $F_{\beta}$ be fibers of the restriction $f \vert_{T_{\beta}}$ for each $\beta$. Let the filtration $Y \supset Y_{n_{R}}(=R) \supset Y_{n_{R}-1} \supset \cdots$ be the refined stratification on $Y$ obtained by Proposition~\ref{RefinedStra}, where $n_{R}:= \dim(R)$. Set $S_{k}:=Y_{k} \setminus Y_{k-1}$. Note that the restriction $f \vert_{S_{k}}$ is a fiber bundle.

    \noindent
    As in the previous subsection, let $c_{R_{k}}:=\operatorname{codim}_{Y}(S_{k}) \geq 2$. Consider the canonical truncation morphism
    \begin{align*}
    	\tau_{\leq \overline{p}(c_{R_{k}}) }(Rf\vert_{f^{-1}(S_{k})})_{\ast}\underline{\mathbb{Q}}_{f^{-1}(S_{k})} \longrightarrow Rf_{\ast} \tau_{\leq \overline{p}(c_{R_{k}}) } \underline{\mathbb{Q} }_{f^{-1}(S_{k})},
    \end{align*}
    which exists by Lemma \ref{CanonMorph}. Assume that for each $k$ and $r \in S_{k}$ the condition $2 \dim(f^{-1}(r))+\dim(S_{k}) \leq n$ is satisfied. Note that $\dim(f^{-1}(r))+\dim(S_{k}) = \dim(f^{-1}(S_{k}))$, and since the above condition is satisfied for each $k$, it follows that $\dim(f^{-1}(r))) \leq n-\dim(f^{-1}(S_{k}))$.
    An immediate consequence is that the above canonical morphism is a quasi-isomorphism. Let $R_{\operatorname{rel}}:=\{S_{k} \ \vert \  2 \dim(f^{-1}(r))+\dim(S_{k}) = n \ \forall \ r \in f^{-1}(S_{k})\}$. This observation motivates the following definition.

    \begin{definition}\label{RelPart}
    	Let $f:X \longrightarrow Y$ be an almost covering where $X$ is a closed $n$-manifold, $Y$ is a closed $n$-pseudomanifold. Let the filtration $Y \supset Y_{n_{R}}(=R) \supset Y_{n_{R}-1} \supset \cdots$ be the refined stratification on $Y$ obtained by Proposition~\ref{RefinedStra}, where $R$ is the fibered set and $n_{R}:= \dim(R)$. The almost covering $f:X \longrightarrow Y$ is called a \textbf{topologically semi-small map} if for each point $r \in S_{k}(:=Y_{k} \setminus Y_{k-1})$ and for each $0 \leq k \leq n_{R}$, the condition $2 \dim(f^{-1}(r))+\dim(S_{k}) \leq n$ is satisfied. Furthermore, we the set $R_{\operatorname{rel}}:=\{S_{k} \ \vert \  2 \dim(f^{-1}(r))+\dim(S_{k}) = n \ \forall \ r \in f^{-1}(S_{k})\}$ is referred to as the set of relevant strata.
    \end{definition}
    
    To avoid ambiguity, we note that the set of relevant strata is defined slightly differently in algebraic geometry. There, the top stratum of $Y$ in the refined stratification is considered a relevant stratum; however, in the above definition, the term “relevant strata” refers to those relevant strata that are a subset of $R$.

    Having defined the set of relevant strata, we can now present the main result of this work.

    \begin{theorem}\label{DecoTheo3}
    	Let $f : X \rightarrow Y$ be a topologically semi-small map such that $X$ is a closed topological manifolds, the fibered set $R$ is a closed stratified space, and $Y$ is a closed topological $n$-pseudomanifold. Set $B:=f^{-1}(R)$, let $R=\bigsqcup_{\beta}T_{\beta}$ be a suitable decomposition of the fibered set, and let $R_{\operatorname{rel}}$ be its relevant part. Furthermore, let the filtration $Y \supset Y_{n_{R}}(=R) \supset Y_{n_{R}-1} \supset \cdots$ be the refined stratification on $Y$ obtained by Proposition~\ref{RefinedStra}, where $n_{R}:= \dim(R)$. Set $S_{k}:= Y_{k} \setminus Y_{k-1}$, and assume the followings:
    	\begin{enumerate}
    		\item for each $r \in R$ the fiber $f^{-1}(r)$ is a closed orientable topological manifold;
    		\item the normal bundle of the manifold $f^{-1}(S_{k})$ in $X$ exists, for each $0 \leq k \leq n_{R}$, and is orientable, and for each $S_{k}\in R_{\operatorname{rel}}$ and each $r\in S_{k}$ there is a small open neighborhood $U$ of $r$ such that the connecting homomorphism in the Gysin sequence of the normal bundle of $S_{k}$ restricted to $f^{-1}(U)$ vanishes in degree $\operatorname{codim}_{X}(B)-1$.
    			\end{enumerate}
    		Let $R(f|_{X \setminus B})_{*}\,\underline{\mathbb{Q}}_{X \setminus B}\simeq \underline{\mathbb{Q}}_{Y \setminus R}\oplus \mathcal{L}$, where $\mathcal{L}$ is the local system on $Y \setminus R$ associated to the covering map $f|_{X \setminus B}$. If each $\operatorname{codim}_{Y}(S_{k})$ is even and $\operatorname{codim}_{X}(B)\geq 2$, then
    	\begin{align}\label{DecoTheo3Eq}
    		Rf_{\ast}\underline{\mathbb{Q}}_{X}[n]\simeq \bigoplus_{S_{k} \in R_{\operatorname{rel}}}\iota_{k\ast}(IC_{\overline{S_{k}}} (R^{0}f\vert_{f^{-1}(S_{k})}\underline{\mathbb{Q}}_{S_{k}})) \oplus IC_{Y}(\underline{\mathbb{Q}}_{Y \setminus R} \oplus \mathcal{L}),
    	\end{align}
    	where $\iota_{k}:\overline{S_{k}} \hookrightarrow Y$ be the canonical closed inclusion, and $IC^{\overline{p}}_{Y}$ is Deligne's intersection complex with respect to the above refined stratification and the perversity $\overline{p}\in\{\overline{n},\overline{m}\}$.
    \end{theorem}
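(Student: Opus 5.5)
The plan is to argue by descending induction over the refined filtration $Y \supset Y_{n_R}(=R) \supset Y_{n_R-1} \supset \cdots$ of Proposition~\ref{RefinedStra}, following the scheme of the proof of Theorem~\ref{DecoTheo1b}. Set $U_k := Y \setminus Y_{k-1}$ and $X^{(k)} := f^{-1}(U_k)$. By induction on $k$, from $n_R$ downwards, I will show that
\begin{align*}
R(f|_{X^{(k)}})_{\ast}\underline{\mathbb{Q}}_{X^{(k)}}[n] \simeq \bigoplus_{S_{l}\in R_{\operatorname{rel}},\, l\geq k} \iota_{l\ast}\bigl(IC_{\overline{S_l}\cap U_k}(R^{0}f|_{f^{-1}(S_l)}\underline{\mathbb{Q}})\bigr) \oplus IC_{U_k}(\underline{\mathbb{Q}}_{Y\setminus R}\oplus\mathcal{L}).
\end{align*}
The base case, over $Y\setminus R$, is Proposition~\ref{PropDecomp1}. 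Taking $k=0$ at the end yields \eqref{DecoTheo3Eq}.

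For the inductive step, pass from $U_{k+1}$ to $U_k = U_{k+1}\sqcup S_k$. Write $B_k := f^{-1}(S_k)$ and $c_k := \operatorname{codim}_X(B_k)$. Assumption (2) supplies an oriented normal bundle, so we get the attaching triangle
\begin{align*}
Rf_{\ast}Rk_{\ast}\underline{\mathbb{Q}}_{B_k}[-c_k] \to Rf_{\ast}\underline{\mathbb{Q}}_{X^{(k)}} \to Rf_{\ast}Ri_{\ast}\underline{\mathbb{Q}}_{X^{(k+1)}} \xrightarrow{[1]}.
\end{align*}
Restricted to $U_{k+1}$, the middle term is known by induction. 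The semi-small inequality $2\dim f^{-1}(r)+\dim S_k\le n$ gives $\dim f^{-1}(r)\le c_k$. Hence $\mathcal{H}^i(Rf_\ast\underline{\mathbb{Q}}_{X^{(k)}})_r=0$ for $i>\dim f^{-1}(r)$, and Lemma~\ref{CanonMorph} shows that truncating at $\overline{p}(c_{R_k})+1$ changes nothing. Here I use that $c_{R_k}$ is even, so $\overline{p}(c_{R_k})=c_{R_k}/2-1$; this is the arithmetic observed after Definition~\ref{RelPart}.

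The stratum $S_k$ then falls into one of two cases.

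\emph{Non-relevant case.} Here $2\dim f^{-1}(r)+\dim S_k<n$, so $\dim f^{-1}(r)\le \overline{p}(c_{R_k})$. The stalks of $Rf_{\ast}Rk_\ast\underline{\mathbb{Q}}_{B_k}[-c_k]$ live in degrees $\ge c_k > \overline{p}(c_{R_k})+1$. Using Lemma~\ref{CanonMorph}, the middle term is therefore $\tau_{\le \overline{p}(c_{R_k})}$ of the pushforward from $U_{k+1}$. Because $\tau$ and $Rj_\ast$ commute with direct sums, the induction hypothesis turns this into $\tau_{\le\overline{p}(c_{R_k})}Rj_{\ast}$ applied summand by summand. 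This is exactly Deligne's recursive step in Definition~\ref{DeSheaf}, both for $IC_{U_k}$ and for each $IC_{\overline{S_l}}$ with $S_k\subset\overline{S_l}$. For the latter I use the commuting square of closed and open inclusions, as in Theorem~\ref{DecoTheo1b}, to move $\iota_{l\ast}$ past $Rj_\ast$.

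\emph{Relevant case.} Here $\dim f^{-1}(r)=c_k=\overline{p}(c_{R_k})+1$. Local flatness gives a neighborhood retracting onto $f^{-1}(r)$ (iterated Lemma~\ref{FibersMap}), so the stalk computation reduces to the Gysin sequence of the sphere bundle over $f^{-1}(U)\simeq f^{-1}(r)$. The vanishing hypothesis in degree $c_k-1$ and orientability of the fiber then make the connecting map zero below degree $c_k$ and an isomorphism in degree $c_k$, exactly as in Proposition~\ref{DecoTheo1}. Lemma~\ref{DecompAlg} with $n=c_k$ splits off $\mathcal{H}^{c_k}=R^{c_k}f_\ast = h_{k\ast}R^0f|_{B_k}\underline{\mathbb{Q}}$, up to orientation of the fibers, in degree $c_k$. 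What remains is $\tau_{\le c_k-1}Rj_\ast$ of the previous stage. After the shift by $[n]$, the split-off piece is the new summand $IC_{S_k}(R^0f|_{B_k}\underline{\mathbb{Q}})$ on its own stratum, placed in degree $-\dim S_k$ because $c_k-n=-\dim S_k$ in the relevant case. The remaining summands are again Deligne-extended as in the non-relevant case.

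\emph{Expected main obstacle.} The delicate point is applying Lemma~\ref{DecompAlg} when the third term is a sum over several previously constructed $IC$'s. Hypothesis (2) of that lemma has to hold for the whole third term, not just for the covering part. I would verify it stalkwise at $r\in S_k$: a neighborhood of $r$ meets only strata $S_l$ with $S_k\subset\overline{S_l}$, and the link computation ties the degree-$c_k$ cohomology of all those summands to the Gysin sequence of $B_k$. A second check is that the summands for other relevant strata $S_l$ acquire no extra degree-$c_k$ contribution along $S_k$. This should follow from their own truncation at $\overline{p}(c_{R_k})$ together with evenness of the codimensions. I would handle both points by induction along the frontier relation, and this is where I expect the argument to need the most care.
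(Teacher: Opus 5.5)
Your proposal is correct and is essentially the paper's own proof. Both argue by descending induction over the refined strata $S_k$ and push forward the attaching triangle for $f^{-1}(S_k)$ by $Rf_\ast$. Non-relevant strata are handled by pure truncation, relevant strata by Lemma~\ref{DecompAlg} fed by the Gysin sequence ($\delta_{c_k-1}=0$ plus orientability forces $\delta_{c_k}$ to be an isomorphism), and the closed/open commuting squares move $\iota_{l\ast}$ past $Rj_\ast$. Two small corrections:

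\begin{enumerate}
\item The obstacle you anticipate does not really arise. By proper base change, the stalk of the entire third term at $r\in S_k$ is the cohomology of the normal sphere bundle over $f^{-1}(r)$, so hypothesis (2) of Lemma~\ref{DecompAlg} is checked for it in one go. Its splitting into summands only matters when rewriting $\tau_{\le c_k-1}$ of it, and there evenness gives $\overline{p}(c_{R_k})=c_{R_l}/2+\overline{p}(c_{R_k}-c_{R_l})$, which is exactly Deligne's step for $IC_{\overline{S_l}}$.
\item In the relevant case $c_k-n=-\dim f^{-1}(S_k)$, not $-\dim S_k$. So the split-off piece sits in degree $-\dim f^{-1}(S_k)$, matching the shift $[\dim(B)]$ in Proposition~\ref{DecoTheo1}.
\end{enumerate}
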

    \begin{proof}
    	The proof analogous to the proof of Proposition \ref{DecoTheo1}. Let $X \supset X_{n_{B}}(=B) \supset X_{n_{B}-1} \supset \cdots$ be the filtration obtained by setting $B_{k}:=f^{-1}(Y_{k+(n_{B}-n_{R})})$, where $n_{B}:=\dim(B)$. Similar to the above consideration, since the fibers are closed orientable topological manifolds, it follows that the sets $X_{k} \setminus X_{k-1}$ are topological manifolds. Consider the following commutative diagram
    	\begin{align*}
    		\begin{tikzcd}[ampersand replacement=\&]
    			B \setminus X_{n_{B}-1} \arrow[r,  hookrightarrow,"k_{2}"] \arrow[d," f \vert_{B \setminus X_{n_{B}}}"] \& X \setminus B_{n_{B}-1} \arrow[d,"f \vert_{X \setminus X_{n_{B}-1}}"] \arrow[r, hookleftarrow,"i_{2}"] \& X \setminus B   \arrow[d,"f \vert_{X \setminus B}"] \\
    			R \setminus Y_{n_{R}-1} \arrow[r, hookrightarrow,"h_{2}"]  \& Y \setminus Y_{n_{R}-1}  \arrow[r, hookleftarrow,"j_{2}"] \& Y \setminus R
    		\end{tikzcd}.
    	\end{align*}
    	Note that the space $B \setminus X_{n_{B}-1}$ is closed in $X \setminus X_{n_{B}-1}$. Since, by the assumption, the normal bundle of $	B \setminus X_{n_{B}-1} $ in $X$ exists and is orientable, we have the following non-splitting distinguished triangle
    	\begin{align*}
    		Rk_{2\ast} \underline{\mathbb{Q}}_{B \setminus X_{n_{B}-1} }[-c_{B}] \longrightarrow \underline{\mathbb{Q}}_{X \setminus X_{n_{B}-1}} \longrightarrow Ri_{2\ast}\underline{\mathbb{Q}}_{X \setminus B} \xrightarrow{[1]},
    	\end{align*} 
    	where $c_{B}:=\operatorname{codim}_{X}(B \setminus B_{n_{B}-1})$. Applying the functor $(Rf \vert_{X \setminus B_{n_{B}-1}})_{\ast}$ to the above triangle yields 
    	\begin{align*}
    		(Rf \vert_{X \setminus B_{n_{B}-1}})_{\ast} Rk_{2\ast} \underline{\mathbb{Q}}_{B \setminus X_{n_{B}-1} }[-c_{B}] \longrightarrow (Rf \vert_{X \setminus B_{n_{B}-1}})_{\ast}\underline{\mathbb{Q}}_{X \setminus B_{n_{B}-1}} \longrightarrow (Rf \vert_{X \setminus B_{n_{B}-1}})_{\ast}Ri_{2\ast}\underline{\mathbb{Q}}_{X \setminus B} \xrightarrow{[1]}.
    	\end{align*} 
    	If the stratum $R \setminus Y_{n_{R}-1}$ is relevant, it follows that $\overline{p}(c_{R})=c_{B}-1$, where $c_{R}:=\operatorname{codim}_{Y}(R \setminus Y_{n_{R}-1})$. Note that the canonical morphism $\tau_{\leq \overline{p}(c_{R})+1}(f \vert_{X \setminus B_{n_{B}-1}})_{\ast}\underline{\mathbb{Q}}_{X \setminus B_{n_{B}-1}} \longrightarrow (f \vert_{X \setminus B_{n_{B}-1}})_{\ast} \underline{\mathbb{Q}}_{X \setminus B_{n_{B}-1}}$ obtained from Lemma~\ref{CanonMorph} is a quasi-isomorphism. By Lemma \ref{FibersMap} together with local flatness implies that each fiber $f^{-1}(r) \subset B \setminus B_{n_{B}-1}$, where $r \in R \setminus R_{n_{R}-1}$, has a sufficiently small open neighborhood in $X \setminus X_{n_{B}-1}$ homotopy equivalent to $F_{2}$, the fiber of the fiber bundle $f \vert_{ B \setminus X_{n_{B}-1}}$. In particular, we obtain $(Rf\vert_{\ast} \underline{\mathbb{Q}}_{X \setminus X_{n_{B}-1}})_{y}^{i}=0$ for $i > c_{B}$ and for all $y \in Y \setminus Y_{n_{R}-1}$. For a point $r \in R \setminus R_{n_{R}-1}$, let $U \subset R \setminus R_{n_{R}-1}$ be a trivializing neighborhood. Furthermore, let $\pi: E \longrightarrow f^{-1}(U)$ be the associated $(c_{B}-1)$-spherical bundle over $f^{-1}(U)$ in $X$, obtained by restricting the spherical bundle over $B \setminus X_{n_{B}-1}$ to $f^{-1}(U)$. Hence, taking stalks of the above distinguished triangle and using $f^{-1}(U) \simeq F_{2}$ results in the following Gysin sequence
    	\begin{align*}
    		0 \rightarrow H^{c_{B}-1}(F) \rightarrow H^{c_{B}-1}(E) \xrightarrow{\delta_{c_{B}-1}(=0)}H^{0}(F)   \rightarrow H^{c_{B}}(F)  \rightarrow H^{c_{B}}(E) \xrightarrow{\delta_{c_{B}}(\cong)}H^{1}(F) \rightarrow 0.
    	\end{align*}  
    	From the vanishing assumption on the connection homomorphism of Gysin sequence in degree $c_{B}-1$, it follows that the morphism $\mathcal{H}^{i}(
    	Rf\vert_{\ast} Ri_{\ast}\underline{\mathbb{Q}}_{X \setminus B} ) \longrightarrow \mathcal{H}^{i+1}(
    	Rf\vert_{\ast} Rk'_{\ast} \underline{\mathbb{Q}}_{B_{\operatorname{rel}}}[-c_{B}])$ is zero for $i < c_{B}$ and an isomorphism for $i=c_{B}$.

    	\noindent
    	Hence, by applying the truncation functor $\tau_{\leq \overline{p}(c_{R})+1}$ to the above distinguished triangle, employing Lemma \ref{DecompAlg}, and using the commutativity of the diagram, we arrive at
    	\begin{align*}
    		(Rf \vert_{X \setminus X_{n_{B}-1}})_{\ast}\underline{\mathbb{Q}}_{X \setminus X_{n_{B}-1}} \simeq 	 h_{2\ast}(R^{0}f \vert_{B \setminus B_{n_{B}-1}})_{\ast} \underline{\mathbb{Q}}_{B \setminus B_{n_{B}-1} }[-c_{B}] \oplus \tau_{\leq \overline{p}(c_{R})} Rj_{2\ast}(\underline{\mathbb{Q}}_{Y \setminus R} \oplus \mathcal{L}),
    	\end{align*}  
    	where we used that the inclusion $h_{2}$ is closed.

    	\noindent
    	If the stratum $Y_{k} \setminus Y_{k-1}$ is not relevant, we obtain
    	\begin{align*}
    		(Rf \vert_{X \setminus X_{n_{B}-1}})_{\ast}\underline{\mathbb{Q}}_{X \setminus X_{n_{B}-1}} \simeq \tau_{\leq \overline{p}(c_{R})} Rj_{2\ast}(\underline{\mathbb{Q}}_{Y \setminus R} \oplus \mathcal{L}).
    	\end{align*}

    	In the next step, we start by examining the following diagram
    	\begin{align*}
    		\begin{tikzcd}[ampersand replacement=\&]
    			U'_{2}\arrow[r,  hookrightarrow,"k_{3}"] \arrow[d," f \vert_{U_{2}'}"] \& U_{3} \arrow[d,"f \vert_{U_{3}}"] \arrow[r, hookleftarrow,"i_{3}"] \& X \setminus B_{n_{B}-1}    \arrow[d,"f \vert_{X \setminus B_{n_{B}-1} }"] \\
    			V'_{2} \arrow[r, hookrightarrow,"h_{3}"]  \& V_{3} \arrow[r, hookleftarrow,"j_{3}"] \& Y \setminus R_{n_{R}-1}
    		\end{tikzcd},
    	\end{align*}
    	where $U_{3}:= X \setminus B_{n_{B}-2}$, $V_{3}:=Y \setminus R_{n_{R}-2} $, $U'_{2}:=X_{n_{B}-1} \setminus X_{n_{B}-2} $, and $V'_{2}:=Y_{n_{R}-1} \setminus Y_{n_{R}-2}$. Hence, by the same argument as above, we arrive at the following distinguished triangle
    		\begin{align*}
    		(Rf \vert_{U_{3}})_{\ast} Rk_{3\ast} \underline{\mathbb{Q}}_{U'_{2}}[-c_{B_{3}}] \longrightarrow (Rf \vert_{U_{3}})_{\ast}\underline{\mathbb{Q}}_{U_{3}} \longrightarrow (Rf \vert_{U_{3}})_{\ast}Ri_{3\ast}\underline{\mathbb{Q}}_{X \setminus B_{n_{B}-1}} \xrightarrow{[1]},
    	\end{align*} 
    	where $c_{B_3}:=\operatorname{codim}_{X}(U_{2}')$. An inductive application of Lemma~\ref{FibersMap} implies that each fiber $f^{-1}(r) \subset U_{2}'$, where $r \in V'_{2}$, has a sufficiently small open neighborhood in $U_{3}$ which deformation retracts to $f^{-1}(r)$. Hence, if the stratum $V_{2}'$ is relevant, we arrive at
    	
    	\begin{align*}
    		(Rf \vert_{X \setminus X_{n_{B}-1}})_{\ast}\underline{\mathbb{Q}}_{X \setminus X_{n_{B}-1}} &\simeq 	 
    		h_{3\ast}(R^{0}f \vert_{U'_{2}})_{\ast} \underline{\mathbb{Q}}_{U'_{2} }[-c_{B_3}]
    		\\
    		&\oplus h_{2\ast}'(\tau_{\leq \overline{p}(c_{R_3}) } Rj'_{3\ast}  (R^{0}f \vert_{B \setminus X_{n_{B}-1}})_{\ast} \underline{\mathbb{Q}}_{B \setminus X_{n_{B}-1} }[-c_{B}])\\& \oplus \tau_{\leq \overline{p}(c_{R_3}) } Rj_{3 \ast} \tau_{\leq \overline{p}(c_{R})} Rj_{2\ast}(\underline{\mathbb{Q}}_{Y \setminus R} \oplus \mathcal{L}),
    	\end{align*}
    	
    		where $c_{R_3}:=\operatorname{codim}_{Y}(V_{2}')$ and we employed the commutativity of the diagram
    		\begin{align*}
    			\begin{tikzcd}[ampersand replacement=\&]
    				R \setminus Y_{n_{R}-1} \arrow[r,hookrightarrow,"h_{2}"] \arrow[d,hookrightarrow,"j_{3}'"]	\& Y \setminus Y_{n_{R}-1} \arrow[d,hookrightarrow,"j_{3}"] \\
    				R \setminus Y_{n_{R}-2} \arrow[r,hookrightarrow,"h_{2}'"]	\& Y \setminus Y_{n_{R}-2} 
    			\end{tikzcd}.
    		\end{align*}

    	Otherwise, if the stratum $V_{2}'$ is not relevant, we obtain the quasi-isomorphism
    	\begin{align*}
    		(Rf \vert_{X \setminus X_{n_{B}-1}})_{\ast}\underline{\mathbb{Q}}_{X \setminus X_{n_{B}-1}} 
    		&\oplus h_{2\ast}'(\tau_{\leq \overline{p}(c_{R_3}) } Rj'_{3\ast}  (R^{0}f \vert_{B \setminus X_{n_{B}-1}})_{\ast} \underline{\mathbb{Q}}_{B \setminus X_{n_{B}-1} }[-c_{B}])\\& \oplus \tau_{\leq \overline{p}(c_{R_3}) } Rj_{3 \ast} \tau_{\leq \overline{p}(c_{R})} Rj_{2\ast}(\underline{\mathbb{Q}}_{Y \setminus R} \oplus \mathcal{L}).
    	\end{align*}

    	We repeat the above procedure for each stratum of the refined stratification of $Y$. Let $S_{k}:=Y_{k} \setminus Y_{k-1}$ and $\iota_{k}: \overline{S_{k}} \hookrightarrow Y$ be the canonical closed inclusion. After applying the shift functor $[n]$ to the obtained quasi-isomorphism, we arrive at
    	\begin{align*}
    		Rf_{\ast}\underline{\mathbb{Q}}_{X}[n]\simeq \bigoplus_{S_{k} \in R_{\operatorname{rel}}}\iota_{k \ast}(IC_{\overline{S_{k}}} (R^{0}f\vert_{f^{-1}(S_{k})}\underline{\mathbb{Q}}_{S_{k}})) \oplus IC_{Y}(\underline{\mathbb{Q}}_{Y \setminus R} \oplus \mathcal{L}).
    	\end{align*}
    \end{proof}

\end{document}